\newcommand{\tun}{\begin{picture}(5,0)(-2,-1)
\put(0,0){\circle*{2}}
\end{picture}}
\newcommand{\tdeux}{\begin{picture}(7,7)(0,-1)
\put(3,0){\circle*{2}}
\put(3,0){\line(0,1){5}}
\put(3,5){\circle*{2}}
\end{picture}}
\newcommand{\ttroisun}{\begin{picture}(15,12)(-5,-1)
\put(3,0){\circle*{2}}
\put(-0.65,0){$\vee$}
\put(6,7){\circle*{2}}
\put(0,7){\circle*{2}}
\end{picture}}
\newcommand{\ttroisdeux}{\begin{picture}(5,15)(-2,-1)
\put(0,0){\circle*{2}}
\put(0,0){\line(0,1){5}}
\put(0,5){\circle*{2}}
\put(0,5){\line(0,1){5}}
\put(0,10){\circle*{2}}
\end{picture}}
\newcommand{\tquatreun}{\begin{picture}(15,12)(-5,-1)
\put(3,0){\circle*{2}}
\put(-0.65,0){$\vee$}
\put(6,7){\circle*{2}}
\put(0,7){\circle*{2}}
\put(3,7){\circle*{2}}
\put(3,0){\line(0,1){7}}
\end{picture}}
\newcommand{\tquatredeux}{\begin{picture}(15,18)(-5,-1)
\put(3,0){\circle*{2}}
\put(-0.65,0){$\vee$}
\put(6,7){\circle*{2}}
\put(0,7){\circle*{2}}
\put(0,14){\circle*{2}}
\put(0,7){\line(0,1){7}}
\end{picture}}
\newcommand{\tquatretrois}{\begin{picture}(15,18)(-5,-1)
\put(3,0){\circle*{2}}
\put(-0.65,0){$\vee$}
\put(6,7){\circle*{2}}
\put(0,7){\circle*{2}}
\put(6,14){\circle*{2}}
\put(6,7){\line(0,1){7}}
\end{picture}}
\newcommand{\tquatrequatre}{\begin{picture}(15,18)(-5,-1)
\put(3,5){\circle*{2}}
\put(-0.65,5){$\vee$}
\put(6,12){\circle*{2}}
\put(0,12){\circle*{2}}
\put(3,0){\circle*{2}}
\put(3,0){\line(0,1){5}}
\end{picture}}
\newcommand{\tquatrecinq}{\begin{picture}(9,19)(-2,-1)
\put(0,0){\circle*{2}}
\put(0,0){\line(0,1){5}}
\put(0,5){\circle*{2}}
\put(0,5){\line(0,1){5}}
\put(0,10){\circle*{2}}
\put(0,10){\line(0,1){5}}
\put(0,15){\circle*{2}}
\end{picture}}
\newcommand{\ptroisun}{\begin{picture}(15,12)(-5,-1)
\put(3,7){\circle*{2}}
\put(-0.65,0){$\wedge$}
\put(6,0){\circle*{2}}
\put(0,0){\circle*{2}}
\end{picture}}
\newcommand{\pquatreun}{\begin{picture}(15,12)(-5,-1)
\put(3,7){\circle*{2}}
\put(-0.65,0){$\wedge$}
\put(6,0){\circle*{2}}
\put(0,0){\circle*{2}}
\put(3,0){\circle*{2}}
\put(2.9,0){\line(0,1){7}}
\end{picture}}
\newcommand{\pquatredeux}{\begin{picture}(15,18)(-5,-1)
\put(3,14){\circle*{2}}
\put(-0.65,7){$\wedge$}
\put(6,7){\circle*{2}}
\put(0,7){\circle*{2}}
\put(0,0){\circle*{2}}
\put(0,0){\line(0,1){7}}
\end{picture}}
\newcommand{\pquatretrois}{\begin{picture}(15,18)(-5,-1)
\put(3,14){\circle*{2}}
\put(-0.65,7){$\wedge$}
\put(6,7){\circle*{2}}
\put(0,7){\circle*{2}}
\put(6,0){\circle*{2}}
\put(6,0){\line(0,1){7}}
\end{picture}}
\newcommand{\pquatrequatre}{\begin{picture}(15,18)(-5,-1)
\put(3,7){\circle*{2}}
\put(-0.65,0){$\wedge$}
\put(6,0){\circle*{2}}
\put(0,0){\circle*{2}}
\put(3,12){\circle*{2}}
\put(3,7){\line(0,1){5}}
\end{picture}}
\newcommand{\pquatrecinq}{\begin{picture}(15,12)(-5,-1)
\put(0,0){\circle*{2}}
\put(7,0){\circle*{2}}
\put(0,7){\circle*{2}}
\put(7,7){\circle*{2}}
\put(0,0){\line(0,1){7}}
\put(7,0){\line(0,1){7}}
\put(.5,1.5){$\scriptstyle \diagup$}
\end{picture}}
\newcommand{\pquatresix}{\begin{picture}(15,12)(-5,-1)
\put(0,0){\circle*{2}}
\put(7,0){\circle*{2}}
\put(0,7){\circle*{2}}
\put(7,7){\circle*{2}}
\put(0,0){\line(0,1){7}}
\put(7,0){\line(0,1){7}}
\put(0,1.5){$\scriptstyle \diagdown$}
\end{picture}}
\newcommand{\pquatresept}{\begin{picture}(15,12)(-5,-1)
\put(0,0){\circle*{2}}
\put(7,0){\circle*{2}}
\put(0,7){\circle*{2}}
\put(7,7){\circle*{2}}
\put(0,0){\line(0,1){7}}
\put(7,0){\line(0,1){7}}
\put(.5,1.5){$\scriptstyle \diagup$}
\put(0,1.5){$\scriptstyle \diagdown$}
\end{picture}}
\newcommand{\pquatrehuit}{\begin{picture}(15,18)(-5,-1)
\put(3,0){\circle*{2}}
\put(-0.65,0){$\vee$}
\put(6,7){\circle*{2}}
\put(0,7){\circle*{2}}
\put(3,14){\circle*{2}}
\put(-0.65,7){$\wedge$}
\end{picture}}
\newcommand{\FQSym}{\mathbf{FQSym}}
\newcommand{\WNP}{\mathcal{WNP}}
\newcommand{\PP}{\mathcal{PP}}
\newcommand{\PF}{\mathcal{PF}}
\renewcommand{\P}{\mathcal{P}}
\newcommand{\h}{{\cal H}}
\renewcommand{\S}{\mathfrak{S}}
\newcommand{\prodg}{\rightsquigarrow}
\newcommand{\prodh}{\lightning}
\newcommand{\tdelta}{\tilde{\Delta}}
\newcommand{\addots}{\begin{picture}(8,8)(0,0)\put(0,0){.}\put(4,4){.}\put(8,8){.}\end{picture}}
\title{Deformation of the Hopf algebra of plane posets}
\date{}
\author{Lo{\"\i}c Foissy \\
\\
{\small{\it Laboratoire de Mathématiques, Université de Reims}}\\
\small{{\it Moulin de la Housse - BP 1039 - 51687 REIMS Cedex 2, France}}\\
\small{e-mail : loic.foissy@univ-reims.fr}}
\newtheorem{defi}{\indent Definition}
\newtheorem{lemma}[defi]{\indent Lemma}
\newtheorem{cor}[defi]{\indent Corollary}
\newtheorem{theo}[defi]{\indent Theorem}
\newtheorem{prop}[defi]{\indent Proposition}
\newenvironment{proof}{{\bf Proof.}}{\hfill $\Box$}
\begin{document}

\maketitle

ABSTRACT. We describe and study a four parameters deformation of the two products and the coproduct of the Hopf algebra of plane posets.
We obtain a family of braided Hopf algebras, which are generically self-dual. We also prove that in a particular case (when the second parameter goes to zero
and the first and third parameters are equal), this deformation is isomorphic, as a self-dual braided Hopf algebra, to a deformation of the Hopf algebra
of free quasi-symmetric functions $\FQSym$. \\
 
KEYWORDS. Plane posets; Deformation; Braided Hopf algebras; Self- duality. \\

AMS CLASSIFICATION. 06A11, 16W30, 16S80.

\tableofcontents

\section*{Introduction}

 A double poset is a finite set with two partial orders. As explained in \cite{MR2}, the space generated by the double posets
 inherits two products and one coproduct, here denoted by $m$, $\prodh$ and $\Delta$, 
making it both a Hopf and an infinitesimal Hopf algebra \cite{Loday}.  Moreover, this Hopf algebra is self dual. A double poset is plane if its two partial orders satisfy 
a (in)compatibility condition, see definition \ref{1}.  The subspace $\h_{\PP}$ generated by plane posets is stable under the two products and the coproduct,
and is self-dual \cite{F1,F2}: in particular, two Hopf pairings are defined on it, using the notion of picture.
Moreover, as proved in \cite{F2}, it is isomorphic to the Hopf algebra of free quasi-symmetric functions $\FQSym$, also known as
the Malvenuto-Reutenauer Hopf algebra of permutations. An explicit isomorphism $\Theta$ is given by the linear extensions of plane posets, see definition \ref{11}.\\

We define in this text a four-parameters deformation of the products and the coproduct of $\h_{\PP}$, together with a deformation of the two pairings
and of the morphism from $\h_{\PP}$ to $\FQSym$. If $q=(q_1,q_2,q_3,q_4)\in K^4$, the product $m_q(P \otimes Q)$ of two plane posets $P$ and $Q$ 
is a linear span of plane posets $R$ such that $R=P\sqcup Q$ as a set, $P$ and $Q$ being plane subposets of $R$. The coefficients are defined with the help of
the two partial orders of $R$, see theorem \ref{14}, and are polynomials in $q$. In particular:
$$\left\{\begin{array}{rcl}
m_{(1,0,0,0)} &=&\prodh,\\
m_{(0,1,0,0)}&=&\prodh^{op},\\
m_{(0,0,1,0)}&=&m,\\
m_{(0,0,0,1)}&=&m^{op}.
\end{array}\right.$$
We also obtain the product dual to the coproduct $\Delta$ (considering the basis of double posets
as orthonormal) as $m_{(1,0,1,1)}$, and its opposite given by $m_{(0,1,1,1)}$. 
Dually, we define a family of coassociative coproducts $\Delta_q$. For any plane poset $P$, $\Delta(P)$ is a linear span of terms $(P\setminus I)\otimes I$,
running over the plane subposets $I$ of $P$, the coefficients being polynomials in $q$. In the particular cases where at least one of the components of $q$ is zero, 
only $h$-ideals, $r$-ideals or biideals can appear in this sum (definition \ref{7} and proposition \ref{22}). We study the compatibility of $\Delta_q$ 
with both products $m$ and $\prodh$ on $\h_{\PP}$ (proposition \ref{23}). For example, $(\h_{\PP},m,\Delta_q)$ satisfies the axiom
$$\Delta_q(xy)=\sum\sum q_3^{|x'_q||y''_q|}q_4^{|x''_q||y'_q|} (x'_qy'_q) \otimes (x''_qy''_q).$$
If $q_3=1$, it is a braided Hopf algebra, with braiding given by $c_q(P\otimes Q)=q_4^{|P||Q|} Q \otimes P$; in particular, if $q_3=1$ and $q_4=1$ 
this is a Hopf algebra, and if $q_3=1$ and $q_4=0$ it is an infinitesimal Hopf algebra. If $q_4=1$, it is the coopposite (or the opposite) 
of a braided Hopf algebra. Similar results hold if we consider the second product $\prodh$, permuting the roles of $(q_3,q_4)$ and $(q_1,q_2)$. 

We define a symmetric pairing $\langle-,-\rangle_q$ such that:
$$\langle x\otimes y,\Delta_q(z)\rangle_q=\langle xy,z\rangle_q\mbox{ for all }x,y,z\in \h_{\PP}.$$
If $q=(1,0,1,1)$, we recover the first "classical" pairing of $\h_{\PP}$. We prove that in the case $q_2=0$, 
this pairing is nondegenerate if, and only if, $q_1\neq 0$ (corollary \ref{36}). Consequently, this pairing is generically nondegenerate. 

The coproduct of $\FQSym$ is finally deformed, in such a way that  the algebra morphism $\Theta$ from $\h_{\PP}$ to $\FQSym$ becomes compatible
with $\Delta_q$, if $q$ has the form $q=(q_1,0,q_1,q_4)$. Deforming the second pairing $\langle-,-\rangle'$ of $\h_{\PP}$ and the usual Hopf pairing
of $\FQSym$, the map $\Theta$ becomes also an isometry (theorem \ref{40}). Consequently, the deformation $\langle-,-\rangle'_q$ is nondegenerate if, and only if,
$q_1q_4 \neq 0$.\\

This text is organized in the following way. The first section contains reminders on the Hopf algebra of plane posets $\h_{\PP}$, its two products, its coproducts
and its two Hopf pairings, and on the isomorphism $\Theta$ from $\h_{\PP}$ to $\FQSym$. The deformation of the products is defined in section 2,
and we also consider the compatibility of these products with several natural bijections on $\PP$ and the stability of certain families of plane posets under these products.
We proceed to the dual construction in the next section, where we also study the compatibility with the two (undeformed) products.
The deformation of the first pairing is described in section 4. The compatibilities with the bijections on $\PP$ or with the second product $\prodh$ are also given,
and the nondegeneracy is proved for $q=(q_1,0,q_3,q_4)$ if $q_1\neq 0$. The last section is devoted to the deformation of the second pairing
and of the morphism to $\FQSym$.
\section{Recalls and notations}

\label{recalls}

\subsection{Double and plane posets}

\begin{defi}\label{1}\begin{enumerate}
\item \cite{MR2} A \emph{ double poset} is a triple $(P,\leq_1,\leq_2)$, where $P$ is a finite set and $\leq_1$, $\leq_2$ are two partial orders on $P$.
\item A \emph{ plane poset} is a double poset $(P,\leq_h,\leq_r)$ such that for all $x, y\in P$, such that $x\neq y$, $x$ and $y$ are comparable for $\leq_h$ 
if, and only if, $x$ and $y$ are not comparable for $\leq_r$. The set of (isoclasses of) plane posets will be denoted by $\PP$. 
For all $n \in \mathbb{N}$, the set of (isoclasses of) plane posets of cardinality $n$ will be denoted by $\PP(n)$.
\end{enumerate}\end{defi}

{\bf Examples}. Here are the plane posets of cardinal $\leq 4$. They are given by the Hasse graph of $\leq_h$; 
if $x$ and $y$ are two vertices of this graph which are not comparable for $\leq_h$, then $x \leq_r y$ if $y$ is more on the right than $x$.
\begin{eqnarray*}
\PP(0)&=&\{1\},\\
\PP(1)&=&\{\tun\},\\
\PP(2)&=&\{\tun\tun,\tdeux\},\\
\PP(3)&=&\{\tun\tun\tun,\tun\tdeux,\tdeux\tun,\ttroisun,\ttroisdeux,\ptroisun\},\\
\PP(4)&=&\left\{\begin{array}{c}
\tun\tun\tun\tun,\tun\tun\tdeux,\tun\tdeux\tun,\tdeux\tun\tun,\tun\ttroisun,\ttroisun\tun,\tun\ttroisdeux,\ttroisdeux\tun,\tun\ptroisun,\ptroisun\tun,\tdeux\tdeux,\\
\tquatreun,\tquatredeux,\tquatretrois,\tquatrequatre,\tquatrecinq,\pquatreun,\pquatredeux,\pquatretrois,\pquatrequatre,\pquatrecinq,\pquatresix,\pquatresept,\pquatrehuit
\end{array}\right\}.\end{eqnarray*}

The following proposition is proved in \cite{F1} (proposition 11):

\begin{prop}
Let $P\in\PP$. We define a relation $\leq$ on $P$ by:
$$(x\leq y) \mbox{ if } (x\leq_h y\mbox{ or } x \leq_r y).$$
Then $\leq$ is a total order on $P$.
\end{prop}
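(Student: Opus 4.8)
The plan is to verify directly the four defining properties of a total order: reflexivity, antisymmetry, transitivity, and totality (any two elements are comparable). Reflexivity is immediate, since $\leq_h$ is itself a partial order, so $x\leq_h x$ and hence $x\leq x$ for every $x\in P$. Totality is essentially a restatement of the plane poset axiom: given $x\neq y$, Definition \ref{1} guarantees that $x$ and $y$ are comparable for exactly one of $\leq_h$ and $\leq_r$, so one of $x\leq_h y$, $y\leq_h x$, $x\leq_r y$, $y\leq_r x$ holds, whence $x\leq y$ or $y\leq x$.

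For antisymmetry I would suppose $x\leq y$ and $y\leq x$ with $x\neq y$, and split into four cases according to which order witnesses each inequality. If both come from $\leq_h$ (resp. both from $\leq_r$), antisymmetry of $\leq_h$ (resp. of $\leq_r$) forces $x=y$, a contradiction. The two mixed cases, say $x\leq_h y$ and $y\leq_r x$, are excluded directly by the incompatibility condition: $x$ and $y$ would then be simultaneously comparable for $\leq_h$ and for $\leq_r$, which the plane poset axiom forbids.

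Transitivity is where the real work lies, and I expect the mixed cases to be the main obstacle. Assume $x\leq y$ and $y\leq z$ with $x,y,z$ pairwise distinct (the degenerate cases being trivial). If both steps use the same order, transitivity of that order gives $x\leq z$ at once. So the crux is, for instance, $x\leq_h y$ together with $y\leq_r z$. Here I would argue by contradiction: if $x\not\leq z$, then neither $x\leq_h z$ nor $x\leq_r z$, so by the plane poset axiom applied to the distinct pair $\{x,z\}$ one of $z<_h x$ or $z<_r x$ must hold. In the first case, from $z<_h x$ and $x\leq_h y$ transitivity of $\leq_h$ gives $z<_h y$; but $y\leq_r z$ shows $y$ and $z$ are comparable for $\leq_r$ as well, contradicting incompatibility. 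In the second case, from $y\leq_r z$ and $z<_r x$ we get $y<_r x$, while $x\leq_h y$ makes $x$ and $y$ comparable for $\leq_h$, again a contradiction. Hence $x\leq z$, and the remaining mixed case $x\leq_r y$, $y\leq_h z$ is settled by the same device with the roles of $\leq_h$ and $\leq_r$ exchanged.

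The entire argument rests on a single recurring mechanism: any attempt to violate antisymmetry or transitivity would manufacture a pair of distinct elements comparable for both partial orders, which is exactly what the plane poset (in)compatibility condition of Definition \ref{1} prohibits. Once this pattern is isolated, each case closes in one line, so the only genuine care needed is the bookkeeping of the cases and invoking transitivity of the correct order ($\leq_h$ or $\leq_r$) at the right moment.
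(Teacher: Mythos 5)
Your proof is correct and complete. Note that the paper itself does not prove this proposition: it cites \cite{F1} (proposition 11) for it, so there is no internal argument to compare against; your direct verification of reflexivity, antisymmetry, transitivity and totality, with the recurring mechanism that any violation would produce two distinct elements comparable for both $\leq_h$ and $\leq_r$ (forbidden by the plane poset axiom), is exactly the natural argument, and the case analysis in the transitivity step (reducing the two mixed cases to the incompatibility condition via transitivity of the appropriate single order) is handled correctly.
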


As a consequence, substituing $\leq$ to $\leq_r$, plane posets are also \emph{special} posets \cite{MR2}, that is to say double posets such that the second
order is total. For any plane poset $P\in \PP(n)$, we shall assume that $P=\{1,\ldots,n\}$ as a totally ordered set. \\

The following theorem is proved in \cite{F2}:

\begin{theo}
Let $\sigma$ be a permutation in the $n$th symmetric group $\S_n$. We define a plane poset $P_\sigma$ in the following way:
\begin{itemize}
\item $P_\sigma=\{1,\ldots,n\}$ as a set.
\item If $i,j \in P_\sigma$, $i\leq_h j$ if $i\leq j$ and $\sigma(i) \leq \sigma(j)$.
\item If $i,j \in P_\sigma$, $i\leq_r j$ if $i\leq j$ and $\sigma(i) \geq \sigma(j)$.
\end{itemize}
Note that the total order on $\{1,\ldots,n\}$ induced by this plane poset structure is the usual one.
Then for all $n \geq 0$, the following map is a bijection:
$$\Psi_n: \left\{\begin{array}{rcl}
\S_n&\longrightarrow&\PP(n)\\
\sigma&\longrightarrow&P_\sigma.
\end{array}\right.$$ \end{theo}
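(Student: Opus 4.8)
The plan is to prove the statement by exhibiting an explicit inverse to $\Psi_n$, rather than by a counting argument, since the equality $|\PP(n)|=n!$ is not available independently of the theorem. There are three things to do: check that $P_\sigma$ really is a plane poset, prove injectivity, and construct the inverse (surjectivity).

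First I would verify that $P_\sigma$ is a genuine plane poset. Reflexivity, antisymmetry and transitivity of $\leq_h$ and of $\leq_r$ are immediate, since each is the intersection of the usual order on $\{1,\dots,n\}$ with the order (resp. its reverse) pulled back along $\sigma$: for instance $i\leq_h j\leq_h k$ forces $i\leq j\leq k$ and $\sigma(i)\leq\sigma(j)\leq\sigma(k)$, hence $i\leq_h k$. For the incompatibility condition of Definition \ref{1}, I would take $i\neq j$, say $i<j$; since $\sigma$ is injective exactly one of $\sigma(i)<\sigma(j)$ or $\sigma(i)>\sigma(j)$ holds, giving exactly one of $i\leq_h j$ or $i\leq_r j$, with no comparability in the reverse direction (which would need $j\leq i$). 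Thus two distinct elements are comparable for $\leq_h$ if and only if they are not comparable for $\leq_r$, and the total order $\leq=\leq_h\cup\leq_r$ attached to $P_\sigma$ is the usual one on $\{1,\dots,n\}$.

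For injectivity I would use that the induced total order is always the standard order on $\{1,\dots,n\}$: any isomorphism of plane posets $P_\sigma\simeq P_\tau$ must fix this order and hence be the identity, so $P_\sigma$ and $P_\tau$ coincide as isoclasses exactly when their relations $\leq_h$ agree. That agreement says $\sigma(i)<\sigma(j)\iff\tau(i)<\tau(j)$ for every pair $i<j$, i.e. $\sigma$ and $\tau$ have the same inversion set; since a permutation is determined by its inversion set, $\sigma=\tau$. Surjectivity is the heart of the matter, and there I would build the inverse directly. Given $P\in\PP(n)$, the preceding proposition lets me identify its underlying set with $\{1,\dots,n\}$ so that $\leq$ is the usual order. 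I then define an auxiliary relation $\preceq$ by $a\preceq b\iff(a\leq_h b\text{ or }b\leq_r a)$ and set $\sigma(i):=\#\{\,j:j\preceq i\,\}$. Assuming $\preceq$ is a total order, $\sigma$ is its rank function, hence a permutation, and one checks $P_\sigma=P$ by proving $i\leq_h j\iff(i\leq j\text{ and }\sigma(i)\leq\sigma(j))$: the forward direction is clear, while conversely $i\leq j$ together with $i\preceq j$ rules out $i\leq_r j$ by incompatibility, leaving $i\leq_h j$; since $\leq_r$ is recovered from $\leq_h$ and $\leq$, this yields $P_\sigma=P$.

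The main obstacle is to show that $\preceq$ is indeed a total order. Totality and antisymmetry fall out quickly from the incompatibility condition — two distinct elements are comparable for exactly one of $\leq_h$, $\leq_r$ — but transitivity is where the real work lies. Given $a\preceq b\preceq c$ with $a,b,c$ distinct, I would split into four cases according to which of $\leq_h$, $\leq_r$ realizes each step; the two "pure" cases reduce to transitivity of $\leq_h$ or of $\leq_r$, while the two "mixed" cases (say $a\leq_h b$ together with $c\leq_r b$) are the delicate ones. There I would argue by contradiction: assuming $c\prec a$ and combining it with the given relations via transitivity of $\leq_h$ or of $\leq_r$ would make two elements comparable for both $\leq_h$ and $\leq_r$, contradicting the incompatibility condition of Definition \ref{1}. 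Once $\preceq$ is known to be total, $\sigma$ is a well-defined bijection, the assignment $P\mapsto\sigma$ inverts $\Psi_n$, and the theorem follows.
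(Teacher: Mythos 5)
Your proof is correct and complete. Note that the paper itself gives no proof of this theorem (it is recalled from the reference cited as F2), so there is nothing in-paper to compare against; your argument supplies exactly the missing content, and it is the natural route: the key step, showing that the auxiliary relation $\preceq$ defined by $a\preceq b \iff (a\leq_h b \text{ or } b\leq_r a)$ is a total order, is the standard structural fact about plane posets (the ``second diagonal'' total order, companion to the order $\leq\;=\;\leq_h\cup\leq_r$ of the preceding proposition), and your case analysis for its transitivity --- the two pure cases by transitivity of $\leq_h$ or $\leq_r$, the two mixed cases by contradiction with the incompatibility axiom --- is exactly right. The remaining pieces (injectivity via inversion sets, recovery of $\leq_r$ from $\leq_h$ and the total order, and the rank function of $\preceq$ giving the inverse permutation) all check out.
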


{\bf Examples.}
$$\begin{array}{rclccrclccrclccrcl}
\Psi_2((12))&=&\tdeux,&&\Psi_2((21))&=&\tun\tun,&&\Psi_3(123))&=&\ttroisdeux,&&\Psi_3((132))&=&\ttroisun,\\
\Psi_3(213))&=&\ptroisun,&&\Psi_3((231))&=&\tdeux\tun,&&\Psi_3(312))&=&\tun \tdeux,&&\Psi_3((321))&=&\tun\tun\tun.
\end{array}$$

We define several bijections on $\PP$:

\begin{defi}
Let $P=(P,\leq_h,\leq_r)\in \PP$. We put:
$$\left\{\begin{array}{rcl}
\iota(P)&=&(P,\leq_r,\leq_h),\\
\alpha(P)&=&(P,\geq_h,\leq_r),\\
\beta(P)&=&(P,\leq_h,\geq_r),\\
\gamma(P)&=&(P,\geq_h,\geq_r).
\end{array}\right.$$
\end{defi}

{\bf Remarks.} \begin{enumerate}
\item Graphically:
\begin{itemize}
\item A Hasse graph of $\alpha(P)$ is obtained from a Hasse graph of $P$ by a horizontal symmetry.
\item A Hasse graph of $\beta(P)$ is obtained from a Hasse graph of $P$ by a vertical symmetry.
\item A Hasse graph of $\gamma(P)$ is obtained from a Hasse graph of $P$ by a rotation of angle $\pi$.
\end{itemize}
\item These bijections generate a group of permutations of $\PP$ of cardinality $8$. It is described by the following array:
$$\begin{array}{c||c|c|c|c|c|c|c}
\circ&\alpha&\beta&\gamma&\iota&\iota\circ \alpha&\iota\circ \beta&\iota\circ \gamma\\
\hline \hline \alpha&Id&\gamma&\beta&\iota\circ \beta&\iota \circ \gamma&\iota&\iota \circ \alpha\\
\hline \beta&\gamma&Id&\alpha&\iota \circ \alpha&\iota&\iota \circ \gamma&\iota \circ \beta\\
\hline \gamma&\beta&\alpha&Id&\iota \circ \gamma&\iota \circ \beta&\iota \circ \alpha&\iota\\
\hline \iota&\iota \circ \alpha&\iota \circ \beta&\iota\circ \gamma&Id&\alpha&\beta&\gamma\\
\hline \iota \circ \alpha&\iota&\iota \circ \gamma&\iota\circ \beta&\beta&\gamma&Id&\alpha\\
\hline \iota \circ \beta&\iota \circ \gamma&\iota&\iota \circ \alpha&\alpha&Id&\gamma&\beta\\
\hline \iota \circ \gamma&\iota\circ \beta&\iota \circ \alpha&\iota&\gamma&\beta&\alpha&Id
\end{array}$$
This is a dihedral group $D_4$.
\end{enumerate}

\subsection{Algebraic structures on plane posets}

Two products are defined on $\PP$. The first is called \emph{composition} in \cite{MR2} and denoted by $\prodg$ in \cite{F1}.
We shall shortly denote it by $m$ in this text.

\begin{defi}
Let $P,Q\in \PP$. 
\begin{enumerate}
\item The double poset $PQ=m(P\otimes Q)$ is defined as follows:
\begin{itemize}
\item $PQ=P\sqcup Q$ as a set, and $P,Q$ are plane subposets of $PQ$.
\item if $x\in P$ and $y\in Q$, then $x \leq_r y$ in $PQ$.
\end{itemize}
\item The double poset $P\prodh Q$ is defined as follows:
\begin{itemize}
\item $P\prodh Q=P\sqcup Q$ as a set, and $P,Q$ are plane subposets of $P\prodh Q$.
\item if $x\in P$ and $y\in Q$, then $x \leq_h y$ in $P\prodh Q$.
\end{itemize}
\end{enumerate}\end{defi}

{\bf Examples.}\begin{enumerate}
\item The Hasse graph of $PQ$ is  the concatenation of the Hasse graphs of $P$ and $Q$. 
\item Here are examples for $\prodh$: $\tun \prodh \tdeux=\ttroisdeux,\: \tdeux \prodh \tun=\ttroisdeux,\: \tun \prodh \tun \tun=\ttroisun,\: \tun \tun \prodh \tun=\ptroisun$.\\
\end{enumerate}

The vector space generated by $\PP$ is denoted by $\h_{\PP}$. These two products are linearly extended to $\h_{\PP}$;
then $(\h_{\PP},m)$ and $(\h_{\PP},\prodh)$ are two associative, unitary algebras, sharing the same unit $1$, which is the empty plane poset.
Moreover, they are both graded by the cardinality of plane posets. They are free algebras, as implied that the following theorem, proved in \cite{F1}:

\begin{theo}\begin{enumerate}
\item \begin{enumerate}
\item Let $P$ be a nonempty plane poset. We shall say that $P$ is \emph{$h$-irreducible} if for all $Q, \in \PP$, $P=QR$ implies that $Q=1$ or $R=1$.
\item Any plane poset $P$ can be uniquely written as $P=P_1\ldots P_k$, where $P_1,\ldots,P_k$ are $h$-irreducible.
We shall say that $P_1,\ldots,P_k$ are the \emph{$h$-irreducible components} of $P$.
\end{enumerate}
\item \begin{enumerate}
\item Let $P$ be a nonempty plane poset. We shall say that $P$ is \emph{$r$-irreducible} if for all $Q, \in \PP$, $P=Q\prodh R$ implies that $Q=1$ or $R=1$.
\item Any plane poset $P$ can be uniquely written as $P=P_1\prodh \ldots \prodh P_k$, where $P_1,\ldots,P_k$ are $r$-irreducible.
We shall say that $P_1,\ldots,P_k$ are the \emph{$r$-irreducible components} of $P$.
\end{enumerate}
\end{enumerate}\end{theo}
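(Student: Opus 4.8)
The plan is to prove both parts by first establishing an existence-and-uniqueness decomposition for the $m$-product and then transporting the whole argument to $\prodh$ via the duality built into plane posets. Concretely, for part 1 I would work with the total order $\leq$ on a plane poset $P$ (guaranteed by the proposition above) to control how $P$ factors. Recall that in a product $QR$ every element of $Q$ satisfies $x\leq_r y$ for every $y\in R$, so in particular $x\leq y$ for the total order. Thus any $m$-factorization $P=Q_1\cdots Q_k$ must split the totally ordered set $\{1,\dots,n\}$ into consecutive intervals, and the factors are exactly the restrictions of $P$ to those intervals. This reduces the combinatorics to identifying the \emph{finest} such splitting.

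\medskip

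\noindent First I would make the notion of $h$-irreducible factor precise as a splitting point of the total order: say $i$ is a \emph{cut} if for all $a\le i<b$ one has $a\le_r b$ in $P$. One checks directly from the definition of $m$ that $P=QR$ nontrivially if and only if the total order of $P$ has a cut at the boundary between $Q$ and $R$; equivalently, the set of cuts of $P$ is exactly the set of positions at which $P$ factors. Existence of the decomposition then follows by taking \emph{all} the cuts $0=i_0<i_1<\dots<i_k=n$ and letting $P_j$ be the restriction of $P$ to the interval $(i_{j-1},i_j]$: each $P_j$ has no interior cut, hence is $h$-irreducible, and $P=P_1\cdots P_k$ by construction (one must verify that the plane-poset structure, i.e.\ the compatibility condition of Definition \ref{1}, is respected, but this is automatic since restriction of a plane poset to an interval is again a plane poset and cuts give exactly the $\leq_r$-relations demanded by $m$). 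Uniqueness is the crux: I would argue that any factorization into $h$-irreducibles must use cut positions only, and that an $h$-irreducible factor cannot contain an interior cut, so the cut positions used must be \emph{all} of them; hence the list of factors and their order are forced.

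\medskip

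\noindent The main obstacle I anticipate is the uniqueness half, specifically showing that the cut positions are intrinsic to $P$ and do not depend on a chosen factorization, and that $h$-irreducibility forces one to use every cut. The clean way to do this is to prove the equivalence ``$P=QR$ (nontrivially) $\iff$ the boundary $|Q|$ is a cut of $P$'' first, since then the cuts form a well-defined subset of $\{1,\dots,n-1\}$ depending only on $P$, and any decomposition into $h$-irreducibles corresponds bijectively to a maximal chain of refinements, which is unique. I would also need the elementary fact that associativity of $m$ lets me speak of $P_1\cdots P_k$ without bracketing.

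\medskip

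\noindent For part 2, I would not repeat the argument but instead apply the bijection $\iota(P)=(P,\leq_r,\leq_h)$, which is an involution of $\PP$ exchanging the two partial orders. From the definitions of $m$ and $\prodh$ one sees immediately that $\iota(P\prodh Q)=\iota(P)\,\iota(Q)$, so $\iota$ intertwines $\prodh$ with $m$. Consequently $P$ is $r$-irreducible if and only if $\iota(P)$ is $h$-irreducible, and a factorization $P=P_1\prodh\cdots\prodh P_k$ corresponds under $\iota$ to the $m$-factorization $\iota(P)=\iota(P_1)\cdots\iota(P_k)$. Existence and uniqueness for $\prodh$ therefore follow from part 1 applied to $\iota(P)$, and the $r$-irreducible components of $P$ are precisely the $\iota$-images of the $h$-irreducible components of $\iota(P)$.
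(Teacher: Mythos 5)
The paper does not actually prove this theorem: it is stated as a recall, "proved in \cite{F1}", so there is no in-paper argument to compare yours against and your proof must stand on its own. On its merits, your approach is sound. The key reduction — in any product $QR$ every element of $Q$ is $\leq_r$ (hence $\leq$) every element of $R$, so factorizations of $P$ correspond to "cuts" of the total order, and $P=QR$ nontrivially if and only if $|Q|$ is a cut — is correct, as is the observation that the plane-poset axiom of definition \ref{1} forces the relations across a cut to be exactly those of the product $m$. Your uniqueness argument is also correct as sketched: the boundaries of any factorization into $h$-irreducibles are cuts of $P$, and a cut of $P$ lying strictly inside a factor's interval is (trivially, since the cut condition for $P$ is stronger) a cut of that factor, contradicting its irreducibility; hence the boundaries must be all of the cuts, which pins down the factors. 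Part 2 via the involution $\iota$ is clean and matches the compatibility $\iota(P\prodh Q)=\iota(P)\iota(Q)$ that the paper itself records, so $r$-irreducibles and $\prodh$-factorizations transport exactly as you say.

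The one step you should not leave implicit is in the existence half: you claim the block $P_j$ between consecutive cuts $i_{j-1}<i_j$ of $P$ "has no interior cut, hence is $h$-irreducible." Irreducibility of $P_j$ means $P_j$ has no cut as a plane poset in its own right, and a cut of $P_j$ is a priori a \emph{weaker} condition than a cut of $P$, since it only constrains pairs inside the block. So you need the converse lemma: if $i_{j-1}$ and $i_j$ are cuts of $P$ (counting $0$ and $n$) and $c\in(i_{j-1},i_j)$ is a cut of $P_j$, then $c$ is a cut of $P$. This is easy — given $a\leq c<b$ in $P$, either $a\leq i_{j-1}$ (use that $i_{j-1}$ is a cut of $P$), or $b>i_j$ (use that $i_j$ is a cut of $P$), or both $a,b$ lie in $P_j$ (use that $c$ is a cut of $P_j$) — but it genuinely uses that the endpoints of the block are cuts, so it is not a pure restatement of "we took all the cuts." Alternatively you can bypass it entirely by proving existence by induction on $|P|$: if $P$ has no cut it is $h$-irreducible, otherwise split at any cut and recurse on the two smaller factors. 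With either patch the argument is complete.
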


{\bf Remark}. The Hasse graphs of the $h$-irreducible components of $H$ are the connected components of the Hasse graph of $(P,\leq_h)$,
whereas the Hasse graphs of the $r$-irreducible components of $H$ are the connected components of the Hasse graph of $(P,\leq_r)$.

\begin{defi}\label{7}
Let $P=(P,\leq_h,\leq_r)$ be a plane poset, and let $I \subseteq P$. 
\begin{enumerate}
\item We shall say that $I$ is a \emph{$h$-ideal} of $P$, if, for all $x,y \in P$: $$(x\in I, \:x\leq_h y)\Longrightarrow (y\in I).$$
\item We shall say that $I$ is a \emph{$r$-ideal} of $P$, if, for all $x,y \in P$: $$(x\in I, \:x\leq_r y)\Longrightarrow (y\in I).$$
\item We shall say that $I$ is a \emph{biideal} of $P$ if it both an $h$-ideal and a $r$-ideal. Equivalently, $I$ is a biideal of $P$ if, for all $x,y \in P$:
$$(x\in I, \:x\leq y)\Longrightarrow (y\in I).$$
\end{enumerate}\end{defi}

The following proposition is proved in \cite{F1} (proposition 29):

\begin{prop}
$\h_{\PP}$ is given a coassociative, counitary coproduct in the following way: for any plane poset $P$,
$$\Delta(P)=\sum_{\mbox{\scriptsize $I$ $h$-ideal of $P$}} (P\setminus I)\otimes I.$$
Moreover, $(\h_{\PP},m,\Delta)$ is a Hopf algebra, and $(\h_{\PP},\prodh,\Delta)$ is an infinitesimal Hopf algebra \cite{Loday}, both graded 
by the cardinality of the plane posets. In other words, using Sweedler's notations $\Delta(x)=\sum x^{(1)}\otimes x^{(2)}$, for all $x,y\in \h_{\PP}$:
$$\left\{\begin{array}{rcl}
\Delta(xy)&=&\displaystyle \sum x^{(1)}y^{(1)}\otimes x^{(2)}y^{(2)},\\[2mm]
\Delta(x \prodh y)&=&\displaystyle \sum x\prodh y^{(1)} \otimes y^{(2)}+\sum x^{(1)}\otimes x^{(2)} \prodh y-x \otimes y.
\end{array}\right.$$
\end{prop}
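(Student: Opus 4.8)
The plan is to verify the four structural facts---counit, coassociativity, and the two compatibility axioms---directly on plane posets, reducing each to a description of the $h$-ideals of the relevant poset, and then to obtain the antipodes for free from gradedness. The counit is immediate: taking $\varepsilon$ to be the projection onto the empty poset, the only $h$-ideal $I$ of $P$ with $P\setminus I$ empty is $I=P$, and the only one with $I$ empty is $I=\emptyset$, so $(\varepsilon\otimes\mathrm{id})\Delta(P)=(\mathrm{id}\otimes\varepsilon)\Delta(P)=P$.

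For coassociativity, I would write both $(\Delta\otimes\mathrm{id})\Delta(P)$ and $(\mathrm{id}\otimes\Delta)\Delta(P)$ as sums, over three-block decompositions $P=A\sqcup B\sqcup C$ into induced plane subposets, of the term $A\otimes B\otimes C$. The crux is a single combinatorial lemma: the condition coming from the left-hand side, namely that $C$ is an $h$-ideal of $P$ and $B$ is an $h$-ideal of $P\setminus C$, is equivalent to the condition coming from the right-hand side, namely that $B\sqcup C$ is an $h$-ideal of $P$ and $C$ is an $h$-ideal of $B\sqcup C$; indeed both are equivalent to the symmetric requirement that $C$ and $B\sqcup C$ be simultaneously $h$-ideals of $P$. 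This I would prove by a short chase using only the up-closure defining $h$-ideals, observing that the plane subposet structures on $A,B,C$ are induced from $P$ and hence independent of the parenthesization.

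Next I would establish the two compatibility axioms, each on a pair $P,Q$ by analysing the $h$-ideals of the product. For $m$, the Hasse graph of $\leq_h$ on $PQ$ is the disjoint union of those of $P$ and $Q$, so an $h$-ideal of $PQ$ splits uniquely as $I_P\sqcup I_Q$ with $I_P,I_Q$ $h$-ideals of $P,Q$; since then $(PQ)\setminus I=(P\setminus I_P)(Q\setminus I_Q)$ and $I=I_PI_Q$, the sum factors as $\Delta(P)\Delta(Q)$, which is exactly $\Delta(xy)=\sum x^{(1)}y^{(1)}\otimes x^{(2)}y^{(2)}$. For $\prodh$, every element of $P$ lies $\leq_h$-below every element of $Q$, so an $h$-ideal $I$ of $P\prodh Q$ is of exactly one of two kinds: either $I\cap P=\emptyset$, in which case $I$ is an arbitrary $h$-ideal $I_Q$ of $Q$, or $I\cap P\neq\emptyset$, which forces $Q\subseteq I$ and leaves $I=I_P\sqcup Q$ with $I_P$ a nonempty $h$-ideal of $P$. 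Using $(P\prodh Q)\setminus I_Q=P\prodh(Q\setminus I_Q)$ and $I_P\sqcup Q=I_P\prodh Q$, the first kind contributes $\sum x\prodh y^{(1)}\otimes y^{(2)}$ and the second contributes the terms of $\sum x^{(1)}\otimes x^{(2)}\prodh y$ with nonempty first factor, so that the two together give the asserted infinitesimal axiom.

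Finally, $\Delta$ plainly preserves total cardinality, and $(\h_{\PP},m)$ and $(\h_{\PP},\prodh)$ are connected graded algebras, their degree-zero part being $K\cdot 1$; hence each is a connected graded bialgebra (respectively infinitesimal bialgebra), and its antipode is produced by the standard recursion, upgrading the structures to a Hopf algebra and an infinitesimal Hopf algebra. I expect the main obstacle to be the $\prodh$ case analysis: one must check carefully that completing the second family of terms to a sum over all $h$-ideals of $P$, in order to recognise $\Delta(P)$, introduces the spurious empty-ideal contribution $P\otimes Q=x\otimes y$, which is precisely the corrective summand $-x\otimes y$ in the formula.
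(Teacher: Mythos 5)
Your argument is correct, but there is nothing in the paper to compare it against: this proposition is stated in the ``Recalls and notations'' section and is quoted from \cite{F1} (proposition 29), so the paper contains no proof of it at all. Judged on its own merits, your verification is sound and complete. The counit computation is right (the empty set and $P$ itself are always $h$-ideals, and they are the only ones giving an empty factor). Your key lemma for coassociativity --- that ``$C$ is an $h$-ideal of $P$ and $B$ is an $h$-ideal of $P\setminus C$'' and ``$B\sqcup C$ is an $h$-ideal of $P$ and $C$ is an $h$-ideal of $B\sqcup C$'' are both equivalent to ``$C$ and $B\sqcup C$ are $h$-ideals of $P$'' --- is exactly the right pivot, and the up-closure chase does prove it, using that the plane subposet structures on $A,B,C$ are induced from $P$. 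The classification of $h$-ideals of the two products is also correct: in $PQ$ all cross-relations are $\leq_r$-relations, so $h$-ideals split as $I_P\sqcup I_Q$, giving the Hopf compatibility; in $P\prodh Q$ any ideal meeting $P$ must contain all of $Q$, giving your two kinds, and the bookkeeping of the term $P\otimes Q$ (it occurs once in the first sum, via $I=Q$, and the excluded case $I_P=\emptyset$ of the second kind is exactly the corrective $-x\otimes y$) matches the infinitesimal axiom precisely. The only point you gloss is the existence of the antipode in the infinitesimal case: the convolution recursion there is not literally the Hopf one, but for connected graded unital counital infinitesimal bialgebras the antipode does exist by the analogous degree-by-degree argument in the framework of \cite{Loday}, so this is a standard fact rather than a gap.
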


{\bf Remarks.} The following compatibilities are satisfied:
\begin{enumerate}
\item For all $P,Q \in \PP$:
$$\left\{\begin{array}{rclccrcl}
\iota(PQ)&=&\iota(P)\prodh \iota(Q),&&\iota(P\prodh Q)&=&\iota(P)\iota(Q),\\
\alpha(PQ)&=&\alpha(P)\alpha(Q),&&\alpha(P\prodh Q)&=&\alpha(Q)\prodh \alpha(P),\\
\beta(PQ)&=&\beta(Q)\beta(P),&&\beta(P\prodh Q)&=&\beta(P)\prodh \beta(Q),\\
\gamma(PQ)&=&\gamma(Q)\gamma(P),&&\gamma(P\prodh Q)&=&\gamma(Q)\prodh \gamma(P).
\end{array}\right.$$
\item Moreover, $\Delta \circ \alpha=(\alpha \otimes \alpha)\circ \Delta^{op}$, $\Delta \circ \beta=(\beta \otimes \beta)\circ \Delta$, and
$\Delta \circ \gamma=(\gamma \otimes \gamma)\circ \Delta^{op}$.
\end{enumerate}

\subsection{Pairings}

We also defined two pairings on $\h_{\PP}$, using the notion of \emph{pictures}:

\begin{defi}
Let $P,Q$ be two elements of $\PP$. 
\begin{enumerate}
\item We denote by $S(P,Q)$ the set of bijections $\sigma:P\longrightarrow Q$ such that, for all $i,j \in P$:
\begin{itemize}
\item ($i\leq_h j$ in $P$) $\Longrightarrow$ ($\sigma(i) \leq_r \sigma(j)$ in $Q$).
\item ($\sigma(i)\leq_h \sigma(j)$ in $Q$) $\Longrightarrow$ ($i \leq_r j$ in $P$).
\end{itemize}
\item We denote by $S'(P,Q)$ the set of bijections $\sigma:P\longrightarrow Q$ such that, for all $i,j \in P$:
\begin{itemize}
\item ($i\leq_h j$ in $P$) $\Longrightarrow$ ($\sigma(i) \leq \sigma(j)$ in $Q$).
\item ($\sigma(i)\leq_h \sigma(j)$ in $Q$) $\Longrightarrow$ ($i \leq j$ in $P$).
\end{itemize}\end{enumerate}\end{defi}

The following theorem is proved in \cite{F1,F2,MR2}: 

\begin{theo}
We define two pairings:
\begin{eqnarray*}
\langle-,-\rangle&:&\left\{\begin{array}{rcl}
\h_{\PP}\otimes \h_{\PP}&\longrightarrow&K\\
P\otimes Q&\longrightarrow&\langle P,Q\rangle=Card(S(P,Q)), 
\end{array}\right.\\ \\
\langle-,-\rangle'&:&\left\{\begin{array}{rcl}
\h_{\PP}\otimes \h_{\PP}&\longrightarrow&K\\
P\otimes Q&\longrightarrow&\langle P,Q\rangle'=Card(S'(P,Q)).
\end{array}\right. \end{eqnarray*}
They are both homogeneous, symmetric, nondegenerate Hopf pairings on the Hopf algebra $\h_{\PP}=(\h_{\PP},m,\Delta)$. 
\end{theo}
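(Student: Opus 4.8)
The plan is to check the four asserted properties in turn; the first three are combinatorial bijection/counting arguments, and the real content is nondegeneracy.

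\emph{Homogeneity and symmetry.} Both are cheap. Any element of $S(P,Q)$ or of $S'(P,Q)$ is in particular a bijection $P\to Q$, so $\langle P,Q\rangle=\langle P,Q\rangle'=0$ unless $|P|=|Q|$, which gives homogeneity. For symmetry I would show that inversion $\sigma\mapsto\sigma^{-1}$ carries $S(P,Q)$ bijectively onto $S(Q,P)$, and likewise $S'(P,Q)$ onto $S'(Q,P)$. Indeed, writing the two defining implications for $\sigma^{-1}\in S(Q,P)$ and substituting $\sigma(i),\sigma(j)$ for the free variables, one recovers exactly the two implications defining $\sigma\in S(P,Q)$, with their roles interchanged. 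Hence $Card(S(P,Q))=Card(S(Q,P))$, and similarly for $S'$, so both pairings are symmetric.

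\emph{The Hopf pairing axiom.} Using symmetry it suffices to prove $\langle PQ,R\rangle=\langle P\otimes Q,\Delta(R)\rangle$ for all $P,Q,R\in\PP$, the dual identity $\langle x,yz\rangle=\langle\Delta(x),y\otimes z\rangle$ then following formally from symmetry. I would establish a bijection
$$S(PQ,R)\;\cong\;\bigsqcup_{\text{$I$ $h$-ideal of $R$}} S(P,R\setminus I)\times S(Q,I),$$
matching the formula $\Delta(R)=\sum_I (R\setminus I)\otimes I$ over $h$-ideals $I$ of $R$ (definition \ref{7}). Given $\sigma\in S(PQ,R)$, put $I=\sigma(Q)$. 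The crucial lemma is that $I$ is an $h$-ideal of $R$: if $\sigma(q)\leq_h b$ in $R$ with $q\in Q$ and $b=\sigma(p)$ for some $p\in P$, the second defining implication forces $q\leq_r p$ in $PQ$, contradicting the relation $p\leq_r q$ imposed by the product $m$ (by antisymmetry of $\leq_r$); hence $b\in\sigma(Q)=I$. Since both orders are inherited by plane subposets, the restrictions $\sigma|_P$ and $\sigma|_Q$ lie in $S(P,R\setminus I)$ and $S(Q,I)$. Conversely, gluing $\tau\in S(P,R\setminus I)$ and $\rho\in S(Q,I)$ along an $h$-ideal $I$ yields a map in $S(PQ,R)$: the only implications left to verify involve one index in $P$ and one in $Q$, and they hold vacuously, using that $I$ is upward closed for $\leq_h$ and that $x\leq_r y$ for all $x\in P$, $y\in Q$ in $PQ$. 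The two constructions are mutually inverse, and passing to cardinalities gives the identity. The counit compatibility $\langle 1,R\rangle=\varepsilon(R)$ is clear, since $S(\emptyset,R)$ consists of one (empty) map if $R=1$ and is empty otherwise. The entire argument repeats for $\langle-,-\rangle'$, replacing $\leq_r$ by the total order $\leq$ in the codomain conditions and invoking antisymmetry of $\leq$ in the $h$-ideal step.

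\emph{Nondegeneracy.} This is the step I expect to be the main obstacle. By homogeneity it is enough to show that the Gram matrix $(\langle P,Q\rangle)_{P,Q\in\PP(n)}$ is invertible for each $n$, and similarly for $\langle-,-\rangle'$. My plan is a triangularity argument. First I would observe that $\mathrm{Id}\in S(P,\iota(P))$: in $\iota(P)=(P,\leq_r,\leq_h)$ the two conditions for the identity reduce to the tautologies $x\leq_h y\Rightarrow x\leq_h y$ and $x\leq_r y\Rightarrow x\leq_r y$, so $\langle P,\iota(P)\rangle\geq 1$ and the involution $\iota$ singles out a distinguished partner of each $P$. It then remains to equip $\PP(n)$ with a total order for which $\langle P,Q\rangle=0$ whenever $Q$ strictly precedes $\iota(P)$, while $\langle P,\iota(P)\rangle=1$, making the reordered matrix unitriangular and hence invertible. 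Controlling this off-diagonal vanishing is the delicate point, and it rests on the rigidity of pictures: the two monotonicity conditions severely constrain any $\sigma\in S(P,Q)$, forcing compatibility with both underlying total orders at once. An alternative route, carried out in \cite{F2} and which I would use as a fallback, is to transport the question through the algebra isomorphism $\Theta\colon\h_{\PP}\to\FQSym$; under $\Theta$ these pairings become the standard, manifestly nondegenerate, duality pairing of $\FQSym$ in its permutation basis, whence nondegeneracy of $\langle-,-\rangle$ and $\langle-,-\rangle'$ follows at once.
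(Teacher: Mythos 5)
Your arguments for homogeneity, symmetry and the Hopf-pairing axiom are correct: inversion $\sigma\mapsto\sigma^{-1}$ does carry $S(P,Q)$ onto $S(Q,P)$ (and $S'$ onto $S'$), and your decomposition of $S(PQ,R)$, resp.\ $S'(PQ,R)$, over the $h$-ideals $I=\sigma(Q)$ of $R$ is exactly the mechanism the paper uses in deformed form (lemma \ref{26} for $\langle-,-\rangle_q$, and the $h$-ideal decomposition of $S'(PQ,R)$ in the proof of the theorem defining $\langle-,-\rangle'_q$ in section 5). Note also that in the paper this theorem is a recall, cited to \cite{F1,F2,MR2}; the paper itself only (re)proves it later, in $q$-deformed generality.

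The genuine gap is the nondegeneracy of the \emph{first} pairing $\langle-,-\rangle$. Your primary route (anti-triangularity of the Gram matrix with respect to $\iota$ and a suitable total order on $\PP(n)$) is the right one, but you leave both of its substantive claims unproved: that $S(P,\iota(P))$ is a \emph{singleton} (from $Id$ you only get $\geq 1$), and, crucially, that $S(P,Q)=\emptyset$ for every $Q$ strictly preceding $\iota(P)$. These are the paper's lemmas \ref{34} and \ref{35} (applied at $q=(1,0,1,1)$, which recovers $\langle-,-\rangle$), and they carry real content: lemma \ref{34} uses that $S(P,\iota(P))$ is a submonoid, hence a subgroup, of the finite group of bijections of $P$; lemma \ref{35} is a two-step induction on the values of the permutations $\Psi_n(P)$, $\Psi_n(Q)$, leading to corollary \ref{36}. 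Declaring this ``the delicate point'' does not discharge it. Worse, your fallback does not repair it: $\Theta$ transports the \emph{second} pairing $\langle-,-\rangle'$ to the standard pairing $\delta_{\sigma,\tau^{-1}}$ of $\FQSym$ (definition \ref{11}, theorem \ref{40}), not the first one. Concretely, $\Theta(\tdeux)=(12)$ and $\langle (12),(12)\rangle_{\FQSym}=\delta_{(12),(12)^{-1}}=1$, whereas $\langle \tdeux,\tdeux\rangle=Card(S(\tdeux,\tdeux))=0$, because $1<_h2$ in the source would force $\sigma(1)<_r\sigma(2)$ in the target, which is impossible in $\tdeux$. So $\langle-,-\rangle$ is \emph{not} the pullback of the $\FQSym$ pairing, the two pairings are genuinely different bilinear forms, and nondegeneracy of the first one does not ``follow at once'' from $\Theta$; it needs the triangularity argument you postponed. (For $\langle-,-\rangle'$ itself the fallback is legitimate and matches the paper's theorem \ref{40}.)
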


\subsection{Morphism to free quasi-symmetric functions}

We here briefly recall the construction of the Hopf algebra $\FQSym$ of free quasi-symmetric functions, also called the Malvenuto-Reutenauer Hopf algebra
\cite{DHT,MR1}. As a vector space, a basis of $\FQSym$ is given by the disjoint union of the symmetric groups $\S_n$, for all $n \geq 0$.
 By convention, the unique element of $\S_0$ is denoted by $1$. The product of $\FQSym$ is given, for $\sigma \in \S_k$, $\tau \in \S_l$, by:
$$\sigma\tau=\sum_{\epsilon \in Sh(k,l)} (\sigma \otimes \tau) \circ \epsilon,$$
where $Sh(k,l)$ is the set of $(k,l)$-shuffles, that is to say permutations $\epsilon \in \S_{k+l}$ such that $\epsilon^{-1}(1)<\ldots <\epsilon^{-1}(k)$
and $\epsilon^{-1}(k+1)<\ldots<\epsilon^{-1}(k+l)$.
In other words, the product of $\sigma$ and $\tau$ is given by shifting the letters of the word
representing $\tau$ by $k$, and then summing all the possible shufflings of this word and of the word representing $\sigma$.
For example:
\begin{eqnarray*}
(123)(21)&=&(12354)+(12534)+(15234)+(51234)+(12543)\\
&&+(15243)+(51243)+(15423)+(51423)+(54123).
\end{eqnarray*}

Let $\sigma \in \Sigma_n$. For all $0\leq k \leq n$, there exists a unique triple 
$\left(\sigma_1^{(k)},\sigma_2^{(k)},\zeta_k\right)\in \S_k \times \S_{n-k} \times Sh(k,n-k)$
such that $\sigma=\zeta_k^{-1} \circ \left(\sigma_1^{(k)} \otimes \sigma_2^{(k)}\right)$. The coproduct of $\FQSym$ is then defined by:
$$\Delta(\sigma)=\sum_{k=0}^n \sigma_1^{(k)} \otimes \sigma_2^{(k)}.$$
For example:
\begin{eqnarray*}
\Delta((43125))&=&1\otimes (43125)+(1) \otimes (3124)+(21) \otimes (123)\\
&&+(321)\otimes (12)+(4312) \otimes (1)+(43125) \otimes 1.
\end{eqnarray*}
Note that $\sigma_1^{(k)}$ and $\sigma_2^{(k)}$ are obtained by cutting the word representing $\sigma$ between the $k$-th and the $(k+1)$-th letter,
and then {\it standardizing} the two obtained words, that is to say applying to their letters the unique increasing bijection to $\{1,\ldots,k\}$ or $\{1,\ldots,n-k\}$.
Moreover, $\FQSym$ has a nondegenerate, homogeneous, Hopf pairing defined by $\langle \sigma,\tau\rangle=\delta_{\sigma,\tau^{-1}}$
for all permutations $\sigma$ and $\tau$.

\begin{defi}\label{11} \cite{FU,MR2}
\begin{enumerate}
\item Let $P=(P,\leq_h,\leq_r)$ a plane poset. Let $x_1<\ldots< x_n$ be the elements of $P$, which is totally ordered.
A \emph{linear extension} of $P$ is a permutation $\sigma \in \S_n$ such that, for all $i,j \in \{1,\ldots,n\}$:
$$(x_i\leq_h x_j)\Longrightarrow (\sigma^{-1}(i)<\sigma^{-1}(j)).$$
The set of linear extensions of $P$ will be denoted by $S_P$.
\item The following map is an isomorphism of Hopf algebras:
$$\Theta:\left\{\begin{array}{rcl}
\h_{\PP}&\longrightarrow&\FQSym\\
P\in \PP&\longrightarrow&\displaystyle \sum_{\sigma \in S_P}\sigma.
\end{array}\right.$$
moreover, for all $x,y\in \h_{\PP}$, $\langle \Theta(x),\Theta(y)\rangle'=\langle x,y\rangle$.
\end{enumerate}\end{defi}

\section{Deformation of the products}

\subsection{Construction}

\begin{defi}
Let $P\in \PP$ and $X,Y \subseteq P$. We put:
\begin{enumerate}
\item $h_X^Y=\sharp\{(x,y) \in X\times Y\:/\:x \leq_h y \mbox{ in }P\}$.
\item $r_X^Y=\sharp\{(x,y) \in X\times Y\:/\:x \leq_r y \mbox{ in }P\}$.
\end{enumerate}\end{defi}

\begin{lemma}\label{13}
Let $X$ and $Y$ be disjoint parts of a plane poset $P$. Then:
$$h_X^Y+h_Y^X+r_X^Y+r_Y^X=|X||Y|.$$
\end{lemma}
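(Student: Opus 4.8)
The plan is to prove the identity by a direct double-counting argument, interpreting each of the four quantities $h_X^Y$, $h_Y^X$, $r_X^Y$, $r_Y^X$ as counting certain pairs associated to $X\times Y$, and showing that together they partition $X\times Y$ into exactly $|X||Y|$ contributions of size one each.

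First I would fix an arbitrary pair $(x,y)\in X\times Y$. Since $X$ and $Y$ are disjoint we have $x\neq y$, so the defining property of a plane poset from Definition \ref{1} applies: $x$ and $y$ are comparable for $\leq_h$ if, and only if, they are not comparable for $\leq_r$. Because $\leq_h$ and $\leq_r$ are partial orders and $x\neq y$, ``comparable for $\leq_h$'' means that exactly one of $x\leq_h y$ or $y\leq_h x$ holds, and similarly for $\leq_r$. Combining these two facts, for each such pair exactly one of the four relations
$$x\leq_h y,\qquad y\leq_h x,\qquad x\leq_r y,\qquad y\leq_r x$$
is satisfied.

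Next I would match these four mutually exclusive and exhaustive relations with the four quantities in the sum: the pair $(x,y)$ contributes to $h_X^Y$ precisely when $x\leq_h y$, to $h_Y^X$ precisely when $y\leq_h x$ (viewing the relevant pair as $(y,x)\in Y\times X$), to $r_X^Y$ precisely when $x\leq_r y$, and to $r_Y^X$ precisely when $y\leq_r x$. Summing over all $(x,y)\in X\times Y$, each of the $|X||Y|$ pairs therefore contributes exactly $1$ to the total $h_X^Y+h_Y^X+r_X^Y+r_Y^X$, which gives the claimed equality.

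I expect no substantial obstacle here; the only point needing care is the bookkeeping observation that $h_Y^X$ and $r_Y^X$, although defined as counts over $Y\times X$, are naturally indexed by the same unordered pairs $\{x,y\}$ with $x\in X$ and $y\in Y$, so that the four counts are genuinely disjoint and jointly cover every pair. The essential ingredient is the incompatibility condition built into the definition of a plane poset, which is exactly what forces the ``precisely one of four'' dichotomy.
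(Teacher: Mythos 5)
Your proof is correct and follows essentially the same route as the paper: the paper's one-line argument also identifies the sum with the number of pairs $(x,y)\in X\times Y$ satisfying $x<y$ or $x>y$ (using the total order $\leq$ obtained by merging $\leq_h$ and $\leq_r$), which is exactly your ``each pair contributes to precisely one of the four counts'' partition. Your version merely unfolds the dichotomy directly from the plane-poset incompatibility condition and antisymmetry instead of citing the total-order proposition, which is a harmless difference in presentation.
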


\begin{proof} Indeed, $h_X^Y+h_Y^X+r_X^Y+r_Y^X=\sharp\{(x,y)\in X \times Y\mid x<y \mbox{ or }x>y\}=|X \times Y|$. \end{proof}\\

{\bf Remark.} If it more generally possible to prove that for any part $X$ and $Y$ of a plane poset:
$$h_X^Y+h_Y^X+r_X^Y+r_Y^X=3|X \cap Y|^2+|X||Y|.$$

\begin{theo}\label{14}
Let $q=(q_1,q_2,q_3,q_4) \in K^4$. We consider the following map:
$$m_q: \left\{ \begin{array}{rcl}
\h_{\PP}\otimes \h_{\PP}&\longrightarrow &\h_{\PP}\\
P\otimes Q&\longrightarrow &\displaystyle \sum_{\substack{(R,I)\in \PP^2\\I\subseteq R,\:R\setminus I=P,\:I=Q}}
q_1^{h_{R\setminus I}^I}q_2^{h_I^{R\setminus I}}q_3^{r_{R\setminus I}^I}q_4^{r_I^{R\setminus I}} R,
\end{array}\right.$$
where $P,Q \in \PP$. Then $(\h_{\PP},m_q)$ is an associative algebra, and its unit is the empty plane poset $1$.
\end{theo}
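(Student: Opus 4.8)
The plan is to reformulate the definition of $m_q$ so that its iteration becomes transparent, and then to show that both ways of associating equal a single symmetric ``triple product''. For disjoint plane subposets $R_1,\ldots,R_k$ of a plane poset $R$ whose union is all of $R$, introduce the weight
$$w_q(R_1,\ldots,R_k)=\prod_{1\leq i<j\leq k} q_1^{h_{R_i}^{R_j}}q_2^{h_{R_j}^{R_i}}q_3^{r_{R_i}^{R_j}}q_4^{r_{R_j}^{R_i}},$$
all exponents being computed in $R$. With $A=R\setminus I$ and $B=I$, the definition of Theorem \ref{14} reads $m_q(P\otimes Q)=\sum w_q(A,B)\,R$, the sum running over plane posets $R$ equipped with an ordered bipartition $R=A\sqcup B$ such that $R|_A\cong P$ and $R|_B\cong Q$. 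The unit axiom is then immediate: if $Q=1$ then necessarily $I=\emptyset$, $R=P$, and every exponent vanishes, so $m_q(P\otimes 1)=P$; symmetrically $m_q(1\otimes P)=P$.

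For associativity I would fix plane posets $P,Q,S$ and compute the coefficient of a plane poset $R$ in each of $m_q(m_q(P\otimes Q)\otimes S)$ and $m_q(P\otimes m_q(Q\otimes S))$. Expanding the outer product on the left over subsets $I\subseteq R$ with $R|_I\cong S$, and then the inner product $m_q(P\otimes Q)$ over subsets $J$ of the intermediate plane poset $T:=R|_{R\setminus I}$, one checks that the data $(I,J)$ is precisely an ordered tripartition $R=A\sqcup B\sqcup C$ with $A=(R\setminus I)\setminus J$, $B=J$, $C=I$ and $R|_A\cong P$, $R|_B\cong Q$, $R|_C\cong S$. The right-hand side expands over the \emph{same} ordered tripartitions, the intermediate poset now being $U:=R|_{B\sqcup C}$. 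Thus both sides are indexed by the same set of ordered tripartitions of plane posets $R$ into subposets isomorphic to $P$, $Q$, $S$, and it remains only to compare the attached coefficients.

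The coefficient attached to a tripartition $R=A\sqcup B\sqcup C$ is $w_q(A\sqcup B,C)\,w_q(A,B)$ on the left and $w_q(A,B\sqcup C)\,w_q(B,C)$ on the right. The heart of the argument is the factorization identity
$$w_q(A\sqcup B,C)\,w_q(A,B)=w_q(A,B,C)=w_q(A,B\sqcup C)\,w_q(B,C),$$
which follows from the additivity relations $h_{X\sqcup Y}^Z=h_X^Z+h_Y^Z$ and $h_Z^{X\sqcup Y}=h_Z^X+h_Z^Y$ (and likewise for $r$), both immediate from the definition. Two points require care here and constitute essentially the only obstacle, the rest being formal: first, that the cross-statistics $h_A^B,r_A^B,\ldots$ defining $w_q(A,B)$ in the intermediate poset $T$ agree with those computed in the ambient $R$, which holds because $T=R|_{A\sqcup B}$ carries the restricted orders $\leq_h,\leq_r$; and second, that the reindexing of the two-step sums by tripartitions is a genuine bijection with no double counting, i.e. that a term $R|_{A\sqcup B}$ occurring in $m_q(P\otimes Q)$ is indeed a plane poset whose restrictions to $A$ and $B$ reproduce $P$ and $Q$. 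Granting these, the common value $w_q(A,B,C)$ is manifestly independent of the chosen grouping of the three blocks, and associativity follows.
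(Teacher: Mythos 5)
Your proposal is correct and follows essentially the same route as the paper's own proof: the paper also reindexes both iterated products by ordered tripartitions (its set $E$ of tuples $(R,J_1,J_2,J_3)$ with $R=J_1\sqcup J_2\sqcup J_3$, via explicit bijections from the two-step index sets $E_1$ and $E_2$) and concludes with the same additivity identities $h_{X\sqcup Y}^Z=h_X^Z+h_Y^Z$, your factorization $w_q(A\sqcup B,C)\,w_q(A,B)=w_q(A,B,C)=w_q(A,B\sqcup C)\,w_q(B,C)$ being exactly the paper's exponent computation. The two points you flag (statistics in the intermediate poset agreeing with those in the ambient one, and the reindexing being a bijection) are precisely what the paper's maps $\phi,\phi',\psi,\psi'$ make explicit, so nothing is missing.
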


\begin{proof} Let $P,Q,R \in \PP$. We put:
$$\left\{\begin{array}{rcl}
P_1&=&(m_q \otimes Id) \circ m_q(P \otimes Q \otimes R),\\
P_2&=&(Id \otimes m_q) \circ m_q(P \otimes Q \otimes R).
\end{array}\right.$$
Then:
$$\left\{\begin{array}{rcl}
P_1&=&\displaystyle \sum_{(R_1,I_1,R_2,I_2)\in E_1}q_1^{h_{R_1\setminus I_1}^{I_1}+h_{R_2\setminus I_2}^{I_2}}
q_2^{h^{R_1\setminus I_1}_{I_1}+h^{R_2\setminus I_2}_{I_2}}q_3^{r_{R_1\setminus I_1}^{I_1}+r_{R_2\setminus I_2}^{I_2}}
q_4^{r^{R_1\setminus I_1}_{I_1}+r^{R_2\setminus I_2}_{I_2}} R_2,\\[2mm]
P_2&=&\displaystyle \sum_{(R_1,I_1,R_2,I_2)\in E_2}q_1^{h_{R_1\setminus I_1}^{I_1}+h_{R_2\setminus I_2}^{I_2}}
q_2^{h^{R_1\setminus I_1}_{I_1}+h^{R_2\setminus I_2}_{I_2}}q_3^{r_{R_1\setminus I_1}^{I_1}+r_{R_2\setminus I_2}^{I_2}}
q_4^{r^{R_1\setminus I_1}_{I_1}+r^{R_2\setminus I_2}_{I_2}} R_2,
\end{array}\right.$$
With:
\begin{eqnarray*}
E_1&=&\{(R_1,I_1,R_2,I_2)\in \PP^4\:/\:I_1\subseteq R_1,\: I_1=Q,\: R_1\setminus I_1=P,\:I_2\subseteq R_2,\: I_2=R,\:R_2\setminus I_2=R_1\},\\
E_2&=&\{(R_1,I_1,R_2,I_2)\in \PP^4\:/\:I_1\subseteq R_1,\: I_1=R,\: R_1\setminus I_1=Q,\:I_2\subseteq R_2,\: I_2=R_1,\:R_2\setminus I_2=P\}.
\end{eqnarray*}
We shall also consider:
$$E=\{(R,J_1,J_2,J_3)\in \PP^4\:/\:R=J_1 \sqcup J_2 \sqcup J_3,\:J_1=P,\:J_2=Q,\:J_3=R\}.$$

{\it First step.} Let us consider the following maps:
\begin{eqnarray*}
&\phi:& \left\{ \begin{array}{rcl}
E_1&\longrightarrow & E\\
(R_1,I_1,R_2,I_2)&\longrightarrow &(R_2,R_1\setminus I_1,I_1,I_2),
\end{array}\right. \\
&\phi':& \left\{ \begin{array}{rcl}
E&\longrightarrow & E_1\\
(R,J_1,J_2,J_3)&\longrightarrow &(J_1\sqcup J_2, J_2,R,J_3).
\end{array}\right. \end{eqnarray*}
By definition of $E$ and $E_1$, these maps are well-defined, and an easy computation shows that
$\phi \circ \phi'=Id_E$ and $\phi' \circ \phi=Id_{E_1}$, so $\phi$ is a bijection.
Let $(R_1,I_1,R_2,I_2)\in E_1$. We put $\phi(R_1,I_1,R_2,I_2)=(R,J_1,J_2,J_3)$. Then:
$$h_{R_1\setminus I_1}^{I_1}+h_{R_2\setminus I_2}^{I_2}=h_{J_1}^{J_2}
+h_{J_1 \sqcup J_2}^{J_3}=h_{J_1}^{J_2}+h_{J_1}^{J_3}+h_{J_2}^{J_3}.$$
Similar computations finally give:
$$P_1=\sum_{(R,J_1,J_2,J_3)\in E}q_1^{h_{J_1}^{J_2}+h_{J_1}^{J_3}+h_{J_2}^{J_3}}
q_2^{h^{J_1}_{J_2}+h^{J_1}_{J_3}+h^{J_2}_{J_3}}
q_3^{r_{J_1}^{J_2}+r_{J_1}^{J_3}+r_{J_2}^{J_3}}q_4^{r^{J_1}_{J_2}+r^{J_1}_{J_3}+r^{J_2}_{J_3}} R.$$

{\it Second step.} Let us consider the following maps:
\begin{eqnarray*}
&\psi:& \left\{ \begin{array}{rcl}
E_2&\longrightarrow & E\\
(R_1,I_1,R_2,I_2)&\longrightarrow &(R_2,R_1\setminus R_1,R_1\setminus I_1,I_1),
\end{array}\right.\\
&\psi':& \left\{ \begin{array}{rcl}
E&\longrightarrow & E_2\\
(R,J_1,J_2,J_3)&\longrightarrow &(J_2\sqcup J_3, J_3,R,J_2 \sqcup J_3).
\end{array}\right. \end{eqnarray*}
By definition of $E$ and $E_2$, these maps are well-defined, and a simple computation shows that $\psi \circ \psi'=Id_E$ and $\psi' \circ \psi=Id_{E_2}$,
so $\psi$ is a bijection. Let $(R_1,I_1,R_2,I_2)\in E_2$. We put $\psi(R_1,I_1,R_2,I_2)=(R,J_1,J_2,J_3)$. Then:
$$h_{R_1\setminus I_1}^{I_1}+h_{R_2\setminus I_2}^{I_2}=h_{J_2}^{J_3}+h_{J_1}^{J_2 \sqcup J_3}=h_{J_1}^{J_2}+h_{J_1}^{J_3}+h_{J_2}^{J_3}.$$
Similar computations finally give:
$$P_2=\sum_{(R,J_1,J_2,J_3)\in E}q_1^{h_{J_1}^{J_2}+h_{J_1}^{J_3}+h_{J_2}^{J_3}}q_2^{h^{J_1}_{J_2}+h^{J_1}_{J_3}+h^{J_2}_{J_3}}
q_3^{r_{J_1}^{J_2}+r_{J_1}^{J_3}+r_{J_2}^{J_3}}q_4^{r^{J_1}_{J_2}+r^{J_1}_{J_3}+r^{J_2}_{J_3}} R.$$
So $m_q$ is associative.\\

{\it Last step.} Let $P \in \PP$. Then:
$$P.1=\sum_{\substack{(R,I)\in \PP^2\\I\subseteq R,\:R\setminus I=P,\:I=1}}
q_1^{h_{R_I}^I}q_2^{h_I^{R\setminus I}}q_3^{r_{R\setminus I}^I}q_4^{r_I^{R\setminus I}} R=q_1^{h_P^1}q_2^{h_1^P}q_3^{r_P^1}q_4^{r_1^P} P=P.$$
Similarly, for all $Q\in \PP$, $1.Q=Q$. \end{proof}\\

{\bf Examples.}
\begin{eqnarray*}
m_q(\tun \otimes \tun)&=&q_3q_4 \tun\tun+q_1q_2 \tdeux,\\
m_q(\tun \otimes \tdeux)&=&q_3^2 \tun\tdeux+q_4^2 \tdeux\tun+q_2(q_3+q_4)\ttroisun+q_1(q_3+q_4)\ptroisun+(q_1^2+q_1q_2+q_2^2) \ttroisdeux,\\
m_q(\tun \otimes \tun\tun)&=&q_1^2 \ttroisun+q_2^2 \ptroisun+(q_1+q_2)q_4\tun\tdeux+(q_1+q_2)q_3\tdeux\tun+(q_3^2+q_3q_4+q_4^2)\tun\tun\tun,\\
m_q(\tdeux \otimes \tun)&=&q_3^2 \tun\tdeux+q_4^2 \tdeux\tun+q_1(q_3+q_4)\ttroisun+q_2(q_3+q_4)\ptroisun+(q_1^2+q_1q_2+q_2^2) \ttroisdeux,\\
m_q(\tun\tun \otimes \tun)&=&q_2^2 \ttroisun+q_1^2 \ptroisun+(q_1+q_2)q_4\tun\tdeux+(q_1+q_2)q_3\tdeux\tun+(q_3^2+q_3q_4+q_4^2)\tun\tun\tun.
\end{eqnarray*}

The following result is immediate:

\begin{prop} \label{15}
Let $(q_1,q_2,q_3,q_4)\in K^4$. Then:
$$\left\{\begin{array}{rcl}
m_{(q_1,q_2,q_3,q_4)}^{op}&=&m_{(q_2,q_1,q_4,q_3)},\\[2mm]
m_{(q_1,q_2,q_3,q_4)}\circ (\iota \otimes \iota)&=&\iota \circ m_{(q_3,q_4,q_1,q_2)},\\[2mm]
m_{(q_1,q_2,q_3,q_4)}\circ (\alpha \otimes \alpha)&=&\alpha \circ m_{(q_2,q_1,q_3,q_4)},\\[2mm]
m_{(q_1,q_2,q_3,q_4)}\circ (\beta \otimes \beta)&=&\beta \circ m_{(q_1,q_2,q_4,q_3)},\\[2mm]
m_{(q_1,q_2,q_3,q_4)}\circ (\gamma\otimes \gamma)&=&\gamma \circ m_{(q_2,q_1,q_4,q_3)}.
\end{array}\right.$$ \end{prop}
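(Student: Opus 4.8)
The plan is to prove each of the five relations by evaluating both sides on a basis element $P\otimes Q$ (with $P,Q\in\PP$) and comparing, for every plane poset $R$, the coefficient of $R$ (or of its image under the relevant map) on the two sides. Since $m_q(P\otimes Q)$ is a sum over pairs $(R,I)$ with $I\subseteq R$, $R\setminus I=P$ and $I=Q$, the whole argument reduces to two ingredients: first, exhibiting for each relation a bijection between the indexing sets of the two products; and second, tracking how the four statistics $h_{R\setminus I}^I$, $h_I^{R\setminus I}$, $r_{R\setminus I}^I$, $r_I^{R\setminus I}$ are permuted under that bijection. Matching the parameter carrying each statistic then forces the asserted permutation of $(q_1,q_2,q_3,q_4)$.

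For the opposite product, observe that $m_q^{op}(P\otimes Q)=m_q(Q\otimes P)$ ranges over pairs $(R,I)$ with $R\setminus I=Q$ and $I=P$, whereas $m_{(q_2,q_1,q_4,q_3)}(P\otimes Q)$ ranges over pairs $(R,I')$ with $R\setminus I'=P$ and $I'=Q$. The assignment $I'=R\setminus I$, keeping $R$ fixed, is a bijection between these two index sets which interchanges a subset with its complement; it therefore swaps $h_{R\setminus I}^I\leftrightarrow h_I^{R\setminus I}$ and $r_{R\setminus I}^I\leftrightarrow r_I^{R\setminus I}$. Reading off which parameter is attached to each statistic gives exactly $(q_2,q_1,q_4,q_3)$.

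For the four geometric bijections I would use that each of $\iota,\alpha,\beta,\gamma$ commutes with the passage to an induced subposet, since the induced order is just the restriction and is unchanged by reversing or swapping the two orders. Consequently, if $R$ occurs in $m_{q'}(P\otimes Q)$ via a pair $(R,I)$, then $f(R)$ occurs in $m_q(f(P)\otimes f(Q))$ via $(f(R),f(I))$, with $f(R)\setminus f(I)=f(R\setminus I)$; as each $f$ is an involution of $\PP$, this is the required index-set bijection and matching coefficients of $f(R)$ on both sides compares the two spans. The statistics then transform according to the action of $f$ on the orders: $\alpha$ reverses $\leq_h$ and swaps $h_{R\setminus I}^I\leftrightarrow h_I^{R\setminus I}$ while fixing the two $r$-statistics; $\beta$ reverses $\leq_r$ and swaps the $r$-statistics while fixing the $h$-ones; $\gamma$ performs both swaps simultaneously; and $\iota$ interchanges $\leq_h$ with $\leq_r$, hence sends the pair of $h$-statistics to the pair of $r$-statistics and back. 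These rules yield $(q_2,q_1,q_3,q_4)$ for $\alpha$, $(q_1,q_2,q_4,q_3)$ for $\beta$, $(q_2,q_1,q_4,q_3)$ for $\gamma$, and $(q_3,q_4,q_1,q_2)$ for $\iota$, as claimed.

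The statement has no genuine obstacle — it is rightly called immediate — so the only real work is bookkeeping. The single point deserving care is the verification of the index-set bijections: one must check that $f(I)$ is indeed a plane subposet of $f(R)$ with complement $f(R\setminus I)$, and that $f(I)\cong f(Q)$ and $f(R\setminus I)\cong f(P)$, so that the pairs genuinely correspond. Once this compatibility of $\iota,\alpha,\beta,\gamma$ (and of the complement exchange, for the opposite) with restriction is recorded, the five exponent comparisons are purely mechanical.
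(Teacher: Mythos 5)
Your proof is correct and is exactly the verification the paper leaves implicit: the paper states this proposition without proof (it is introduced by ``The following result is immediate''), and your bookkeeping --- the complementation bijection $(R,I)\mapsto(R,R\setminus I)$ for the opposite product, the bijections $(R,I)\mapsto(f(R),f(I))$ for $f\in\{\iota,\alpha,\beta,\gamma\}$ (justified by the fact that these involutions commute with passing to induced subposets), and the resulting permutations of the four statistics $h_{R\setminus I}^I$, $h_I^{R\setminus I}$, $r_{R\setminus I}^I$, $r_I^{R\setminus I}$ --- is precisely the intended argument. All five parameter permutations you derive agree with the statement.
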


\subsection{Particular cases}

\begin{lemma} \label{21}
Let $P\in \PP$ and $X\subseteq P$.
\begin{enumerate}
\item $X$ is a $h$-ideal of $P$ if, and only if, $h_X^{P\setminus X}=0$.
\item $X$ is a $r$-ideal of $P$ if, and only if, $r_X^{P\setminus X}=0$.
\item The $h$-irreducible components of $P$ are the $h$-irreducible components of $X$ and $P\setminus X$ if, and only if, 
$h_{P\setminus X}^X=h_X^{P\setminus X}=0$.
\item The $r$-irreducible components of $P$ are the $r$-irreducible components of $X$ and $P\setminus X$ if, and only if, 
$r_{P\setminus X}^X=r_X^{P\setminus X}=0$.
\item $P=X(P\setminus X)$ if, and only if, $h_{P\setminus X}^X=h_X^{P\setminus X}=r_{P\setminus X}^X=0$.
\item $P=X \prodh (P\setminus X)$ if, and only if, $r_{P\setminus X}^X=r_X^{P\setminus X}=h_{P\setminus X}^X=0$.
\end{enumerate} \end{lemma}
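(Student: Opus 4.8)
The plan is to handle the six assertions in three groups, matched to the tools they need.

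Parts (1) and (2) are immediate from the definitions. The defining implication of an $h$-ideal, namely $(x\in X,\ x\leq_h y)\Rightarrow y\in X$, fails exactly when there is a pair $(x,y)$ with $x\in X$, $y\in P\setminus X$ and $x\leq_h y$. As $X$ and $P\setminus X$ are disjoint, every such pair automatically has $x\neq y$, so the absence of such pairs is precisely the condition $h_X^{P\setminus X}=0$. Assertion (2) is the same statement with $\leq_r$ replacing $\leq_h$.

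For parts (5) and (6) I would combine the definitions of the two products with Lemma \ref{13}. By definition, $P=X(P\setminus X)$ means $x\leq_r y$ for every $x\in X$ and $y\in P\setminus X$, that is $r_X^{P\setminus X}=|X||P\setminus X|$. Since a pair comparable for $\leq_r$ is incomparable for $\leq_h$ (the defining property of a plane poset) and since $\leq_r$ is antisymmetric, this single equality forces $h_X^{P\setminus X}=h_{P\setminus X}^X=r_{P\setminus X}^X=0$. Conversely, given these three vanishings, Lemma \ref{13} (whose four terms sum to $|X||P\setminus X|$) returns $r_X^{P\setminus X}=|X||P\setminus X|$, which is exactly the product structure; so (5) holds. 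Assertion (6) is the mirror statement obtained by swapping the roles of $\leq_h$ and $\leq_r$: here $P=X\prodh(P\setminus X)$ amounts to $h_X^{P\setminus X}=|X||P\setminus X|$, equivalent by the same reasoning to $r_{P\setminus X}^X=r_X^{P\setminus X}=h_{P\setminus X}^X=0$.

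The delicate assertions are (3) and (4), and this is where I expect the main work to lie. By the remark following the factorization theorem, the $h$-irreducible components of a plane poset are the connected components of its Hasse graph for $\leq_h$. These components split into those of $X$ and those of $P\setminus X$ if and only if no connected component straddles the boundary, i.e. if and only if the Hasse graph has no edge between $X$ and $P\setminus X$. The point requiring care is to match this with the counting condition $h_X^{P\setminus X}=h_{P\setminus X}^X=0$, which prohibits all $\leq_h$-relations across the boundary and not merely cover relations. One direction is clear; for the other, if some $x\leq_h y$ crossed the boundary, then a saturated chain from $x$ to $y$ would contain a cover relation joining an element of $X$ to an element of $P\setminus X$, hence an edge of the Hasse graph across the boundary. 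The two conditions therefore coincide, giving (3); assertion (4) is the identical argument carried out for $\leq_r$.
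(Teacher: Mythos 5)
Your proposal is correct and takes essentially the same route as the paper: parts (1)--(2) by unfolding the definitions, parts (3)--(4) via the identification of the $h$-irreducible (resp.\ $r$-irreducible) components with the connected components of the Hasse graph of $\leq_h$ (resp.\ $\leq_r$), and parts (5)--(6) via the plane-poset dichotomy between $\leq_h$- and $\leq_r$-comparability. The only cosmetic differences are that in (5)--(6) you invoke lemma \ref{13} globally where the paper argues pointwise with the same dichotomy, and in (3)--(4) you add a saturated-chain argument to pass from Hasse edges (cover relations) to arbitrary $\leq_h$-relations, a step the paper sidesteps by working directly with comparabilities and the irreducibility of the components.
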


\begin{proof} We give the proofs of points $1$, $3$ and $5$. The others are similar.\\

1. $\Longleftarrow$. Let $x \in X$, $y\in P$, such that $x\leq_h y$. As $h_X^{P\setminus X}=0$, $y\notin P\setminus X$, so $y \in X$: $X$ is a $h$-ideal.

$\Longrightarrow$. Then, for all $x\in X$, $y\in P\setminus X$, $x\leq_h y$ is not possible. So $h_X^{P\setminus X}=0$.\\

3. $\Longrightarrow$. Let us put $P=P_1 \ldots P_k$, where $P_1,\ldots,P_k$ are the $h$-irreducible components of $P$.
By hypothesis, $X$ is the disjoint union of certain $P_i$'s, and $P\setminus X$ is the disjoint union of the other $P_i$'s. So $h_{P\setminus X}^X=
h_X^{P\setminus X}=0$. 

$\Longleftarrow$. Let $I$ be a $h$-irreducible component of $P$. If $I \cap X$ and $I\cap (P\setminus X)$, then for any $x \in I\cap X$ 
and any $y \in I\cap (P\setminus X)$, $x$ and $y$ are not comparable for $\leq_h$: contradiction. So $I$ is included in $I$ or in $P\setminus X$,
so is a $h$-irreducible component of $X$ or $P\setminus X$. \\

5. $\Longrightarrow$. If $x \in X$ and $y \in P\setminus X$, then $x <_r y$, so $h_{P\setminus X}^X=h_X^{P\setminus X}=r_{P\setminus X}^X=0$. 

$\Longleftarrow$. If $x \in X$ and $y\in P\setminus X$, by hypothesis we do not have $x<_h y$, $x >_h y$ nor $x >_r y$, so $x<_r y$.
Hence, $P=X(P\setminus X)$.  \end{proof}

\begin{prop} \label{16}\begin{enumerate}
\item Let us assume that $q_3=q_4=0$. Let $P=P_1\prodh\ldots \prodh P_k$ and $P'=P_{k+1}\prodh \ldots \prodh P_{k+l}$ be two plane posets,
decomposed into their $r$-connected components. Then:
$$m_q(P\otimes P')=\sum_{\sigma\in Sh(k,l)} \prod_{1\leq i\leq k<j\leq k+l} Q_{i,j}(\sigma)^{|P_i||P_j|} P_{\sigma(1)}\prodh \ldots
\prodh P_{\sigma(k+l)},$$
where $Q_{i,j}(\sigma)=q_1$ if $\sigma^{-1}(i)<\sigma^{-1}(j)$ and $q_2$ if $\sigma^{-1}(i)>\sigma^{-1}(j)$.
\item Let us assume that $q_1=q_2=0$. Let $P=P_1\ldots P_k$ and $P'=P_{k+1} \ldots P_{k+l}$ be two plane posets,
decomposed into their $r$-connected components. Then:
$$m_q(P\otimes P')=\sum_{\sigma\in Sh(k,l)} \prod_{1\leq i\leq k<j\leq k+l} Q'_{i,j}(\sigma)^{|P_i||P_j|} P_{\sigma(1)} \ldots
 P_{\sigma(k+l)},$$
where $Q'_{i,j}(\sigma)=q_3$ if $\sigma^{-1}(i)<\sigma^{-1}(j)$ and $q_4$ if $\sigma^{-1}(i)>\sigma^{-1}(j)$.
\end{enumerate}\end{prop}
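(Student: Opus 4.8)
The plan is to compute $m_q(P\otimes P')$ directly from the definition in Theorem \ref{14}, specializing the parameters as indicated, and to identify which terms $R$ survive. I will focus on part (1), where $q_3=q_4=0$; part (2) follows by a symmetric argument, exchanging the roles of $(q_1,q_2)$ and $(q_3,q_4)$ and of $m$ and $\prodh$ (compare Proposition \ref{15}). Writing $I$ for the copy of $P'$ inside $R$ and $R\setminus I$ for the copy of $P$, the general coefficient is $q_1^{h_{R\setminus I}^I}q_2^{h_I^{R\setminus I}}q_3^{r_{R\setminus I}^I}q_4^{r_I^{R\setminus I}}$. Setting $q_3=q_4=0$ forces every surviving term to satisfy $r_{R\setminus I}^I=r_I^{R\setminus I}=0$ (with the usual convention $0^0=1$). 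By Lemma \ref{21}(4), these two vanishing conditions say exactly that the $r$-irreducible components of $R$ are the $r$-irreducible components of $P$ together with those of $P'$, i.e.\ that $R$ is a ``shuffle'' of the blocks $P_1,\dots,P_k$ and $P_{k+1},\dots,P_{k+l}$ along the $\prodh$-product.

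Next I would parametrize these surviving $R$. Since the $r$-irreducible components of $R$ are precisely $P_1,\dots,P_{k+l}$, and since $R$ is determined by its total order together with its two partial orders, each such $R$ is obtained by choosing a linear arrangement of the $k+l$ blocks that preserves the internal order of $P_1,\dots,P_k$ and of $P_{k+1},\dots,P_{k+l}$; this is exactly a $(k,l)$-shuffle $\sigma\in Sh(k,l)$, giving $R=P_{\sigma(1)}\prodh\cdots\prodh P_{\sigma(k+l)}$. The key point here is that because the $r$-components cannot be split, no ``interleaving'' across blocks occurs in the $\leq_r$ order; any two elements in distinct blocks are $\leq_h$-comparable, so the block order is governed by $\leq_h$, matching the $\prodh$-product structure.

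It then remains to evaluate the surviving coefficient $q_1^{h_{R\setminus I}^I}q_2^{h_I^{R\setminus I}}$ for a fixed shuffle $\sigma$. The exponents count $\leq_h$-comparable pairs $(x,y)$ with $x\in P=R\setminus I$ and $y\in P'=I$. For a pair of blocks $P_i$ (with $i\le k$, inside $R\setminus I$) and $P_j$ (with $j>k$, inside $I$), Lemma \ref{13} applied to the disjoint parts $P_i,P_j$ together with the vanishing of both $r$-terms gives $h_{P_i}^{P_j}+h_{P_j}^{P_i}=|P_i||P_j|$; moreover, since every element of $P_i$ is $\leq_h$-comparable to every element of $P_j$ in one fixed direction determined by their relative position in $\sigma$, exactly one of $h_{P_i}^{P_j},h_{P_j}^{P_i}$ equals $|P_i||P_j|$ and the other is $0$. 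Summing over all cross-block pairs $1\le i\le k<j\le k+l$ yields the factor $\prod_{i,j}Q_{i,j}(\sigma)^{|P_i||P_j|}$, with $Q_{i,j}(\sigma)=q_1$ when $\sigma^{-1}(i)<\sigma^{-1}(j)$ (so $P_i$ precedes $P_j$, contributing to $h_{R\setminus I}^I$) and $q_2$ otherwise. Collecting these gives the stated formula.

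The main obstacle I anticipate is the bookkeeping in the final step: verifying cleanly that, for each cross-block pair, \emph{all} $|P_i||P_j|$ comparable pairs point the same way (so the contribution is a pure power of $q_1$ or of $q_2$, never a mixed product). This is where the plane-poset axiom of Definition \ref{1} and the vanishing of the $r$-terms must be combined carefully; everything else is a routine but careful reorganization of the defining sum, together with the bijection between surviving terms $R$ and shuffles $\sigma$.
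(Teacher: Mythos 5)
Your proposal is correct and follows essentially the same route as the paper's own proof: specialize the coefficient, use Lemma \ref{21}(4) to force the surviving $R$ to be $\prodh$-shuffles of the blocks, and then read off the exponents block by block from the relative positions in the shuffle. The final ``bookkeeping'' worry resolves immediately, since in $R=P_{\sigma(1)}\prodh\cdots\prodh P_{\sigma(k+l)}$ the definition of $\prodh$ already makes every element of an earlier block $\leq_h$ every element of a later block, which is exactly the observation the paper uses.
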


\begin{proof} Let us prove the first point; the proof of the second point is similar.
Let us consider a plane poset $R$ such that the coefficient of $R$ in $m_q(P \otimes P')$ is not zero. 
So there exists $I \subseteq R$ such that $R\setminus I=P$ and $I=P'$. Moreover, as $q_3=q_4=0$, $r_{R\setminus I}^I=r_I^{R\setminus I}=0$.
By lemma \ref{21}-4, the $r$-connected components of $R$ are the $r$-components of $R\setminus I$ and $I$.
As a consequence, there exists a $(k,l)$-shuffle $\sigma$, such that $R=P_{\sigma(1)}\prodh \ldots \prodh P_{\sigma(k+l)}$.
Then $h_{R\setminus I}^I$ is the sum of $|P_i||P_j|$, where $1\leq i\leq k<j\leq k+l$, such that $\sigma^{-1}(i)<\sigma^{-1}(j)$; $h_I^{R\setminus I}$ 
is the sum of $|P_i||P_j|$, where $1\leq i\leq k<j\leq k+l$, such that $\sigma^{-1}(i)>\sigma^{-1}(j)$. This immediately implies the announced result.  \end{proof}\\

{\bf Remarks.}\begin{enumerate}
\item  The first point implies that:
\begin{itemize}
\item  $m_{(q_1,0,0,0)}(P\otimes P')=q_1^{|P||P'|} P \prodh P'$. In particular, $m_{(1,0,0,0)}=\prodh$.
\item $m_{(0,q_2,0,0)}(P\otimes P')=q_2^{|P||P'|} P' \prodh P$. In particular, $m_{(0,1,0,0)}=\prodh^{op}$.
\item $m_{(0,0,q_3,0)}(P\otimes P')=q_3^{|P||P'|} PP'$. In particular, $m_{(1,0,0,0)}=m$.
\item $m_{(0,0,0,q_4)}(P\otimes P')=q_4^{|P||P'|} P'P$. In particular, $m_{(1,0,0,0)}=m^{op}$.
\end{itemize}
\item It is possible to define $m_q$ on the space of double posets. The same arguments prove that it is still associative.
However, $m_{(1,0,0,0)}$ is not equal to $\prodh$ on $\h_{\PP}$ and $m_{(0,0,1,0)}$ is not equal to $m$; for example,
if we denote by $\wp_2$  the double poset with two elements $x,y$, $x$ and $y$ being not comparable for $\leq_h$ and $\leq_r$:
$$m_{(1,0,0,0)}(\tun \otimes \tun)=\tdeux+2\wp_2,\hspace{1cm} m_{(0,0,1,0)}(\tun \otimes \tun)=\tun \tun+2\wp_2.$$ 
\end{enumerate}

\subsection{Subalgebras and quotients}

These two particular families of plane posets are used in \cite{F1,F2}:

\begin{defi}
Let $P \in \PP$.
\begin{enumerate}
\item We shall say that $P$ is a \emph{plane forest} if it does not contain $\ptroisun$ as a plane subposet.
The set of plane forests is denoted by $\PF$.
\item We shall say that $P$ is \emph{WN} ("without N") if it does not contain $\pquatrecinq$ nor $\pquatresix$.
The set of WN posets is denoted by $\WNP$.
\end{enumerate}\end{defi}

{\bf Examples.} A plane poset is a plane forest if, and only if, its Hasse graph is a rooted forest.
\begin{eqnarray*}
\PF(0)&=&\{1\},\\
\PF(1)&=&\{\tun\},\\
\PF(2)&=&\{\tun\tun,\tdeux\},\\
\PF(3)&=&\{\tun\tun\tun,\tun\tdeux,\tdeux\tun,\ttroisun,\ttroisdeux\},\\
\PF(4)&=&\left\{\tun\tun\tun\tun,\tun\tun\tdeux,\tun\tdeux\tun,\tdeux\tun\tun,\tun\ttroisun,\ttroisun\tun,
\tun\ttroisdeux,\ttroisdeux\tun,\tdeux\tdeux,\tquatreun,\tquatredeux,\tquatretrois,\tquatrequatre,\tquatrecinq\right\},\\ \\
\WNP(0)&=&\{1\},\\
\WNP(1)&=&\{\tun\},\\
\WNP(2)&=&\{\tun\tun,\tdeux\},\\
\WNP(3)&=&\{\tun\tun\tun,\tun\tdeux,\tdeux\tun,\ttroisun,\ttroisdeux,\ptroisun\},\\
\WNP(4)&=&\left\{\begin{array}{c}
\tun\tun\tun\tun,\tun\tun\tdeux,\tun\tdeux\tun,\tdeux\tun\tun,\tun\ttroisun,\ttroisun\tun,\tun\ttroisdeux,\ttroisdeux\tun,\tun\ptroisun,\ptroisun\tun,\tdeux\tdeux,\\
\tquatreun,\tquatredeux,\tquatretrois,\tquatrequatre,\tquatrecinq,\pquatreun,\pquatredeux,\pquatretrois,\pquatrequatre,\pquatresept,\pquatrehuit
\end{array}\right\}.\end{eqnarray*}

\begin{defi}
We denote by:
\begin{itemize}
\item $\h_{\WNP}$ the subspace of $\h_{\PP}$ generated by WN plane posets.
\item  $\h_{\PF}$ the subspace of $\h_{\PP}$ generated by plane forests.
\item $I_{\WNP}$ the subspace of $\h_{\PP}$ generated by plane posets which are not WN.
\item  $I_{\PF}$ the subspace of $\h_{\PP}$ generated by plane posets which are not plane forests.
\end{itemize} \end{defi}

Note that $\h_{\WNP}$ and $\h_{\PF}$ are naturally identified with $\h_{\PP}/I_{\WNP}$ and $\h_{\PP}/I_{\PF}$.

\begin{prop} Let $q=(q_1,q_2,q_3,q_4)\in K^4$.
\begin{enumerate}
\item $\h_{\WNP}$ is a subalgebra of $(\h_{\PP},m_q)$ if and only if, $q_1=q_2=0$ or $q_3=q_4=0$.
\item $\h_{\PF}$ is a subalgebra of $(\h_{\PP},m_q)$ if and only if, $q_1=q_2=0$.
\item $I_{\WNP}$ and $I_{\PF}$ are ideals of $(\h_{\PP},m_q)$.
\end{enumerate}\end{prop}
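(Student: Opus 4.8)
The plan is to lean on the one structural feature of $m_q$: in every term $R$ of $m_q(P\otimes Q)$ we have $R\setminus I=P$ and $I=Q$, so $P$ and $Q$ both sit inside $R$ as plane subposets. Since $\WNP$ and $\PF$ are defined by forbidden plane subposets, they are closed under taking plane subposets. Point 3 then follows immediately: if $P$ is not WN it contains $\pquatrecinq$ or $\pquatresix$, hence so does every $R$ occurring in $m_q(P\otimes Q)$ (where $P=R\setminus I$) and in $m_q(Q\otimes P)$ (where $P=I$); thus $I_{\WNP}$ is a two-sided ideal, and the identical argument with $\ptroisun$ shows that $I_{\PF}$ is one.

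For the ``if'' parts I would specialise Proposition \ref{16}. When $q_3=q_4=0$, $m_q(P\otimes Q)$ is a sum of iterated products $P_{\sigma(1)}\prodh\cdots\prodh P_{\sigma(k+l)}$ of the $r$-irreducible components of $P$ and $Q$; when $q_1=q_2=0$ the same holds with $\prodh$ replaced by $m$ and the components taken for $\leq_h$. Each such component is a plane subposet, hence WN (resp. a forest) as soon as $P,Q$ are, so the two ``if'' statements reduce to a closure lemma: $\WNP$ is stable under both $m$ and $\prodh$, and $\PF$ is stable under $m$. I would prove the WN closure by a connectivity remark: in each of $\pquatrecinq$ and $\pquatresix$ both the comparability graph of $\leq_h$ and that of $\leq_r$ are connected. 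In $P\prodh Q$ every pair with one element in each factor is $\leq_h$-comparable, hence not $\leq_r$-comparable, so every $\leq_r$-edge lies inside a single factor; were an N a plane subposet meeting both factors, its connected $\leq_r$-graph would need a $\leq_r$-edge crossing between the factors, which is impossible, so the whole N lies in one factor, contradicting that the factors are WN. The case of $m$ is dual, using the $\leq_h$-graph. For $\PF$ the product $m$ is concatenation of the $\leq_h$-Hasse graphs, so a juxtaposition of rooted forests is again a rooted forest; note that $\PF$ is not $\prodh$-stable, since $\tun\tun\prodh\tun=\ptroisun$, which is exactly why point 2 must exclude $q_3=q_4=0$.

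For the ``only if'' part of point 2, the displayed values of $m_q(\tun\tun\otimes\tun)$ and $m_q(\tun\otimes\tun\tun)$ already exhibit the non-forest $\ptroisun$ with coefficients $q_1^2$ and $q_2^2$; if $\h_{\PF}$ is a subalgebra these must vanish, forcing $q_1=q_2=0$. For point 1 I argue contrapositively: suppose $q_1$ or $q_2$ is nonzero and $q_3$ or $q_4$ is nonzero. By Proposition \ref{15}, together with the fact that $\alpha$ and $\beta$ preserve $\WNP$ (they merely reflect Hasse graphs and so permute $\{\pquatrecinq,\pquatresix\}$), the statement ``$\h_{\WNP}$ is a subalgebra of $(\h_{\PP},m_q)$'' is unchanged under the swaps $q_1\leftrightarrow q_2$ and $q_3\leftrightarrow q_4$; so I may assume $q_1\neq 0$ and $q_3\neq 0$. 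Then I would compute $m_q(\tdeux\tun\otimes\tun)$, where both factors are WN: realising $\pquatrecinq$ on its totally ordered vertex set, the only singleton $I$ whose complement is isomorphic to $\tdeux\tun$ is the maximal vertex, and the three cross relations from the complement to it contribute exactly $q_1^2q_3$. Hence the non-WN poset $\pquatrecinq$ appears in $m_q(\tdeux\tun\otimes\tun)$ with coefficient $q_1^2q_3\neq 0$, so $\h_{\WNP}$ is not a subalgebra.

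The substantive step is the $\WNP$ closure lemma; the connectivity observation makes it short, but it has to be run uniformly over both N-posets and both products. The only other point demanding care is the bookkeeping in the point 1 witness: one must check that no other decomposition of $\pquatrecinq$ yields the complement $\tdeux\tun$, so that the coefficient is the single monomial $q_1^2q_3$ and cannot be cancelled by a competing term.
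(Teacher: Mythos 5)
Your proof is correct. For point 3, for the ``if'' halves of points 1--2 (specialising Proposition \ref{16} and reducing to closure of $\WNP$ under $m$ and $\prodh$, resp.\ of $\PF$ under $m$), and for the ``only if'' half of point 2 (the coefficients $q_1^2$, $q_2^2$ of $\ptroisun$ in $m_q(\tun\tun\otimes\tun)$ and $m_q(\tun\otimes\tun\tun)$), you follow exactly the paper's route; in fact you improve on it there, since the paper merely \emph{asserts} that a $\prodh$-product (resp.\ $m$-product) of WN posets is WN, whereas your connectivity argument --- both comparability graphs of each N-poset are connected, while in $P\prodh Q$ (resp.\ $PQ$) no $\leq_r$-edge (resp.\ $\leq_h$-edge) joins the two factors --- actually proves this closure. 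The genuine divergence is the necessity direction of point 1. The paper reads off four coefficients from two products of WN posets: the coefficients of $\pquatrecinq$ and $\pquatresix$ in $m_q(\ptroisun\otimes\tun)$ and $m_q(\tun\otimes\ptroisun)$, which (up to the labelling of the two N-posets) are $q_1q_3^2$, $q_1q_4^2$, $q_2q_3^2$, $q_2q_4^2$; their simultaneous vanishing forces $q_1=q_2=0$ or $q_3=q_4=0$ with no symmetry argument. You instead invoke Proposition \ref{15} together with the fact that the involutions $\alpha,\beta$ preserve $\h_{\WNP}$ (legitimate: they send plane subposets to plane subposets and permute the two N-posets) to reduce to $q_1\neq 0$, $q_3\neq 0$, and then exhibit the single witness $m_q(\tdeux\tun\otimes\tun)$, in which $\pquatrecinq$ occurs with the lone monomial $q_1^2q_3$; your uniqueness check is right, since of the four singletons of $\pquatrecinq$ only the maximal one has complement $\tdeux\tun$, the others giving $\ptroisun$, $\ttroisun$ and $\tun\tdeux$. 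The trade-off: the paper's version is self-contained and needs no invariance lemma, while yours needs the (easy) stability of $\h_{\WNP}$ under $\alpha,\beta$ but only one coefficient computation, the remaining cases coming for free from the dihedral symmetry. Both arguments are complete.
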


\begin{proof} $1$. $\Longleftarrow$.  We use the notations of proposition \ref{16}-1. If $q_3=q_4=0$, let us consider two WN posets $P$ and $P'$.  
then the $P_i$'s are also WN, so for any $\sigma \in \S_{k+l}$, $P_{\sigma^{-1}(1)}\prodh \ldots \prodh P_{\sigma^{-1}(k+l)}$ is WN. 
As a conclusion, $m_q(P\otimes P')\in \h_{\WNP}$. The proof is similar if $q_1=q_2=0$, using proposition \ref{16}-2.\\

 $1$. $\Longrightarrow$. Let us consider the coefficients of $\pquatrecinq$ and $\pquatresix$ in certain products. We obtain:
 $$\begin{array}{c|c|c}
&\pquatrecinq&\pquatresix\\
\hline m_q(\ptroisun\otimes \tun)&q_1q_3^2&q_1q_4^2\\
\hline m_q(\tun \otimes \ptroisun)&q_2q_4^2&q_2q_3^2.
\end{array}$$
 If $\h_{\WNP}$ is a subalgebra of $(\h_{\PP},m_q)$, then these four coefficients are zero, so, from the first row, $q_1=0$ or $q_3=q_4=0$
 and from the second row, $q_2=0$ or $q_3=q_4=0$. As a conclusion, $q_1=q_2=0$ or $q_3=q_4=0$.\\
 
$2$. $\Longleftarrow$. We use the notations of proposition \ref{16}-2. If $q_1=q_2=0$, let us consider two plane forests $P$ and $P'$.  
Then the $P_i$'s are plane trees, so for any $\sigma \in \S_{k+l}$, $P_{\sigma^{-1}(1)}\prodh \ldots \prodh P_{\sigma^{-1}(k+l)}$ is a plane forest. 
As a conclusion, $m_q(P\otimes P')\in \h_{\PF}$.\\

 $2$. $\Longrightarrow$. Let us consider the coefficients of $\ptroisun$ in certain products. We obtain:
 $$\begin{array}{c|c}
&\ptroisun\\
\hline m_q(\tun\otimes \tun\tun)&q_2^2\\
\hline m_q(\tun\tun \otimes \tun)&q_1^2.
\end{array}$$
 If $\h_{\PF}$ is a subalgebra of $(\h_{\PP},m_q)$, then $q_1=q_2=0$.\\
 
$3$. Let $P$ and $P'$ be two plane posets such that $P$ or $P'$ is not WN. Let us consider a plane poset $R$ such that the coefficient
of $R$ in $m_q(P \otimes P')$ is not zero. There exists $I \subseteq R$, such that $R\setminus I=P$ and $I=P'$.
As $P$ or $P'$ is not WN, $I$ or $R\setminus I$ contains $\pquatrecinq$ or $\pquatresix$, so $R$ contains $\pquatrecinq$ or $\pquatresix$:
$R$ is not WN. So $m_q(P\otimes P') \subseteq I_{\WNP}$.
The proof is similar for $I_{\PF}$, using $\ptroisun$ instead of $\pquatrecinq$ and $\pquatresix$. \end{proof}

\section{Dual coproducts}

\subsection{Constructions}

Dually, we give $\h_{\PP}$ a family of coproducts $\Delta_q$, for $q\in K^4$, defined  for all $P\in \PP$ by:
$$\Delta_q(P)=\sum_{I\subseteq P}q_1^{h_{P\setminus I}^I}q_2^{h_I^{P\setminus I}}
q_3^{r_{P\setminus I}^I}q_4^{r_I^{P\setminus I}} (P\setminus I)\otimes I.$$
These coproducts are coassociative; their common counit is given by:
$$\varepsilon: \left\{ \begin{array}{rcl}
\h_{\PP}&\longrightarrow & K\\
P\in \PP&\longrightarrow &\delta_{1,P}.
\end{array}\right.$$

{\bf Examples.} We put, for all $P \in \PP$, nonempty, $\tdelta_q(P)=\Delta(P)-P\otimes 1-1\otimes P$.
\begin{eqnarray*}
\tdelta_q(\tdeux)&=&(q_1+q_2) \tun \otimes \tun,\\
\tdelta_q(\tun\tun)&=&(q_3+q_4 )\otimes \tun,\\
\tdelta(\ttroisdeux)&=&(q_1^2+q_1q_2+q_2^2) \tun \otimes \tdeux+(q_1^2+q_1q_2+q_2^2)\tdeux \otimes \tun,\\
\tdelta(\ttroisun)&=&q_2(q_3+q_4)\tun \otimes \tdeux+q_1(q_3+q_4)\tdeux \otimes \tun+q_1^2 \tun \otimes \tun \tun+q_2^2 \tun\tun \otimes \tun,\\
\tdelta(\ptroisun)&=&q_1(q_3+q_4)\tun \otimes \tdeux+q_2(q_3+q_4)\tdeux \otimes \tun+q_2^2 \tun \otimes \tun \tun+q_1^2 \tun\tun \otimes \tun,\\
\tdelta(\tdeux \tun)&=&q_4^2\tun \otimes \tdeux+q_3^2\tdeux \otimes \tun+(q_1+q_2)q_3 \tun \otimes \tun \tun+(q_1+q_2)q_4 \tun\tun \otimes \tun,\\
\tdelta(\tun\tdeux)&=&q_3^2\tun \otimes \tdeux+q_4^2\tdeux \otimes \tun+(q_1+q_2)q_4 \tun \otimes \tun \tun+(q_1+q_2)q_3 \tun\tun \otimes \tun,\\
\tdelta(\tun\tun\tun)&=&(q_3^2+q_3q_4+q_4^2)\tun \otimes \tun\tun+(q_3^2+q_3q_4+q_4^2)\tun\tun \otimes \tun.
\end{eqnarray*}

Dualizing proposition \ref{15}:

\begin{prop}
Let $(q_1,q_2,q_3,q_4)\in K^4$. Then:
$$\left\{\begin{array}{rcl}
\Delta_{(q_1,q_2,q_3,q_4)}^{op}&=&\Delta_{(q_2,q_1,q_4,q_3)},\\[2mm]
(\iota \otimes \iota)\circ \Delta_{(q_1,q_2,q_3,q_4)}&=& \Delta_{(q_3,q_4,q_1,q_2)}\circ\iota,\\[2mm]
(\alpha \otimes \alpha)\circ \Delta_{(q_1,q_2,q_3,q_4)}&=& \Delta_{(q_2,q_1,q_3,q_4)}\circ\alpha,\\[2mm]
(\beta \otimes \beta)\circ \Delta_{(q_1,q_2,q_3,q_4)}&=& \Delta_{(q_1,q_2,q_4,q_3)}\circ\beta,\\[2mm]
(\gamma \otimes \gamma)\circ \Delta_{(q_1,q_2,q_3,q_4)}&=& \Delta_{(q_2,q_1,q_4,q_3)}\circ\gamma.
\end{array}\right.$$  \end{prop}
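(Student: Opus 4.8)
The plan is to dualize Proposition \ref{15} directly, exploiting the fact that $\Delta_q$ is, by construction, the coproduct adjoint to $m_q$ with respect to the pairing that makes the plane-poset basis orthonormal. First I would make this duality precise: for plane posets $P,Q,R$, the coefficient of $R$ in $m_q(P\otimes Q)$ equals the coefficient of $P\otimes Q$ in $\Delta_q(R)$, since both are indexed by the same datum of a subset $I\subseteq R$ with $R\setminus I=P$ and $I=Q$, carrying the identical monomial $q_1^{h_{R\setminus I}^I}q_2^{h_I^{R\setminus I}}q_3^{r_{R\setminus I}^I}q_4^{r_I^{R\setminus I}}$. Thus each of the five identities in Proposition \ref{15} transposes to a corresponding identity for $\Delta_q$, with the transpose of $m_q^{op}$ being $\Delta_q^{op}$, and the transpose of a precomposition $m_q\circ(f\otimes f)$ being a postcomposition $(f\otimes f)\circ\Delta_q$ when $f$ is one of the involutions $\iota,\alpha,\beta,\gamma$ (each of which is its own adjoint on the orthonormal basis, being an involutive permutation of $\PP$).

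Alternatively, and perhaps more transparently for a self-contained write-up, I would verify each line by a direct bijective argument on the summation defining $\Delta_q$. For the first line, $\Delta_{(q_1,q_2,q_3,q_4)}^{op}(P)=\sum_{I\subseteq P}q_1^{h_{P\setminus I}^I}q_2^{h_I^{P\setminus I}}q_3^{r_{P\setminus I}^I}q_4^{r_I^{P\setminus I}}\,I\otimes(P\setminus I)$; reindexing by $J=P\setminus I$ and observing that $h_{P\setminus I}^I=h_J^{P\setminus J}$ and so on, one reads off exactly $\Delta_{(q_2,q_1,q_4,q_3)}(P)$. For the lines involving $\iota,\alpha,\beta,\gamma$, the key input is how each involution transforms the four statistics $h_X^Y,h_Y^X,r_X^Y,r_Y^X$: since $\iota$ swaps $\leq_h$ and $\leq_r$, it swaps the pair $(q_1,q_2)\leftrightarrow(q_3,q_4)$; since $\alpha$ reverses $\leq_h$, it swaps $h_X^Y\leftrightarrow h_Y^X$, i.e. $q_1\leftrightarrow q_2$; since $\beta$ reverses $\leq_r$, it swaps $q_3\leftrightarrow q_4$; and $\gamma=\alpha\beta$ does both. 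One must also note that each involution sends a subset $I\subseteq P$ to the same underlying subset of the transformed poset, so the index set of the sum is preserved and only the exponents get permuted.

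The step requiring the most care is bookkeeping the statistics correctly under each involution, in particular confirming that $h_{P\setminus I}^I$ for $P$ becomes the appropriate statistic for $(P\setminus I)\otimes I$ inside the transformed poset $\iota(P)$, $\alpha(P)$, etc. The subtlety is that while $\alpha$ and $\beta$ leave the total order $\leq$ unchanged (so they fix the tensorands $P\setminus I$ and $I$ as sets with their induced orders being merely relabelled), the involution $\iota$ does \emph{not} preserve $\leq$, so one should check that $\Delta_q\circ\iota$ is still well-defined on the same index set and that no reordering of the two tensor factors is forced. Once the action on the four statistics is tabulated, reading off the correct permuted parameter vector $q$ in each of the five lines is immediate, and the parallel with the signs appearing in the undeformed compatibilities $\Delta\circ\alpha=(\alpha\otimes\alpha)\circ\Delta^{op}$, $\Delta\circ\beta=(\beta\otimes\beta)\circ\Delta$, $\Delta\circ\gamma=(\gamma\otimes\gamma)\circ\Delta^{op}$ already recorded after Proposition on $\Delta$ serves as a useful consistency check on the placement of the $op$'s.
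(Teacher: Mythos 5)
Your proposal is correct, and your first argument is precisely the paper's proof: the paper obtains this proposition by dualizing proposition \ref{15}, resting on exactly the observation you make, namely that the coefficient of $R$ in $m_q(P\otimes Q)$ equals the coefficient of $P\otimes Q$ in $\Delta_q(R)$, and that $\iota,\alpha,\beta,\gamma$ are self-adjoint involutive permutations of the orthonormal basis $\PP$; your second, direct verification is a sound unpacking of the same computation. One correction to your last paragraph: the claim about the total order is backwards --- $\iota$ \emph{does} preserve $\leq$ (since $\leq$ is the union of $\leq_h$ and $\leq_r$, which $\iota$ merely exchanges), whereas $\alpha$, $\beta$, $\gamma$ do not (they reverse $\leq$ on the $h$-comparable, resp.\ $r$-comparable, resp.\ all pairs). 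This slip is harmless, because the total order never enters the definition of $\Delta_q$: the sum runs over all subsets $I\subseteq P$ and the exponents involve only $\leq_h$ and $\leq_r$, so the subtlety you flag does not actually need checking.
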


\subsection{Particular cases}

\begin{prop}\label{22}
Let $P\in \PP$. Then:
\begin{enumerate}
\item $\displaystyle \Delta_{(q,q,q,q)}(P)=\sum_{I\subseteq P} q^{|P\setminus I||I|}(P\setminus I)\otimes I$.
\item 
$$\left\{\begin{array}{rcl}
\Delta_{(0,q_2,q_3,q_4)}(P)&=&\displaystyle \sum_{\mbox{\scriptsize $I$ $h$-ideal of $P$}} 
q_2^{h_{P\setminus I}^I}q_3^{r_I^{P\setminus I}}q_4^{r_{P\setminus I}^I}I \otimes (P\setminus I),\\
\Delta_{(q_1,0,q_3,q_4)}(P)&=&\displaystyle \sum_{\mbox{\scriptsize $I$ $h$-ideal of $P$}} 
q_1^{h_{P\setminus I}^I}q_4^{r_I^{P\setminus I}}q_3^{r_{P\setminus I}^I}(P\setminus I) \otimes I.
\end{array}\right.$$
\item $$\left\{\begin{array}{rcl}
\Delta_{(q_1,q_2,0,q_4)}(P)&=&\displaystyle \sum_{\mbox{\scriptsize $I$ $r$-ideal of $P$}}
q_1^{h_I^{P\setminus I}}q_2^{h_{P\setminus I}^I}q_4^{r_{P\setminus I}^I} I \otimes (P\setminus I),\\
\Delta_{(q_1,q_2,q_3,0)}(P)&=&\displaystyle \sum_{\mbox{\scriptsize $I$ $r$-ideal of $P$}} 
q_2^{h_I^{P\setminus I}}q_1^{h_{P\setminus I}^I}q_3^{r_{P\setminus I}^I}(P\setminus I) \otimes I.\\
\end{array}\right.$$
\item  $$\left\{\begin{array}{rcl}
\Delta_{(0,q_2,0,q_4)}(P)&=&\displaystyle \sum_{\mbox{\scriptsize $I$ biideal of $P$}} 
q_2^{h_{P\setminus I}^I}q_4^{r_{P\setminus I}^I}I \otimes (P\setminus I),\\
\Delta_{(q_1,0,q_3,0)}(P)&=&\displaystyle \sum_{\mbox{\scriptsize $I$ biideal of $P$}} 
q_1^{h_{P\setminus I}^I}q_3^{r_{P\setminus I}^I}(P\setminus I) \otimes I.\\
\end{array}\right.$$
\item If $P=P_1\cdots P_k$, where the $P_i$'s are $h$-connected,
$$\Delta_{(0,0,q_3,q_4)}(P)=\sum_{I\subseteq \{1,\cdots,k\}}q_3^{\alpha_P(I)}q_4^{\alpha_P(\{1,\ldots,k\}\setminus I)}
 P_I\otimes P_{\{1,\cdots,k\}\setminus I},$$
with, for all $J=\{j_1,\cdots,j_l\}$, $1\leq j_1<\cdots <j_l\leq k$, $P_J =P_{j_1} \cdots P_{j_l}$, 
and $\alpha_P(J)=\displaystyle \sum_{i\in J,j\notin J,i<j}|P_i||P_j|$.
\item If $P=P_1\prodh\cdots \prodh P_k$, where the $P_i$'s are $r$-connected,
$$\Delta_{(q_1,q_2,0,0)}(P)=\sum_{I\subseteq \{1,\cdots,k\}} q_1^{\beta_P(I)}q_2^{\beta_P(\{1,\ldots,k\}\setminus I)}
P_I^\prodh\otimes P_{\{1,\cdots,k\}\setminus I}^\prodh,$$
with, for all $J=\{j_1,\cdots,j_l\}$, $1\leq j_1<\cdots <j_l\leq k$, $P_J\prodh =P_{j_1}\prodh \cdots \prodh P_{j_l}$, 
and $\beta_P(J)=\displaystyle \sum_{i\in J,j\notin J,i<j}|P_i||P_j|$.

\item  $$\left\{\begin{array}{rcl}
\Delta_{(q,0,0,0)}(P)&=&\displaystyle \sum_{P_1 \prodh P_2=P} q^{|P_1||P_2|}P_1 \otimes P_2,\\
\Delta_{(0,q,0,0)}(P)&=&\displaystyle \sum_{P_1 \prodh P_2=P} q^{|P_1||P_2|}P_2 \otimes P_1,\\
\Delta_{(0,0,q,0)}(P)&=&\displaystyle \sum_{P_1P_2=P} q^{|P_1||P_2|}P_1 \otimes P_2,\\
\Delta_{(0,0,0,q)}(P)&=&\displaystyle \sum_{P_1P_2=P} q^{|P_1||P_2|}P_2 \otimes P_1 .\\
\end{array}\right.$$
\item $\displaystyle \Delta_{(0,0,0,0)}(P)=P\otimes 1+1\otimes P$ if $P\neq 1$.
\end{enumerate} \end{prop}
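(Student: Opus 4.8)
The plan is to treat all eight parts uniformly, exploiting that specializing some of the $q_i$ to zero simply removes from the defining sum $\Delta_q(P)=\sum_{I\subseteq P}q_1^{h_{P\setminus I}^I}q_2^{h_I^{P\setminus I}}q_3^{r_{P\setminus I}^I}q_4^{r_I^{P\setminus I}}(P\setminus I)\otimes I$ every term in which the exponent of a vanishing variable is strictly positive (with the convention $0^0=1$). Thus in each case the surviving subsets $I\subseteq P$ are exactly those for which the corresponding one, two, three or four of the quantities $h_{P\setminus I}^I$, $h_I^{P\setminus I}$, $r_{P\setminus I}^I$, $r_I^{P\setminus I}$ vanish. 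The whole proposition then reduces to translating these vanishing conditions into the structural descriptions of the statement, which is precisely what Lemma \ref{21} supplies; part $1$ is separate and follows at once from Lemma \ref{13}, since with $q_1=q_2=q_3=q_4=q$ the four exponents add up to $|P\setminus I||I|$.

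Concretely, I would proceed case by case. For part $2$ the condition $h_{P\setminus I}^I=0$ (resp.\ $h_I^{P\setminus I}=0$) says, by Lemma \ref{21}(1), that $P\setminus I$ (resp.\ $I$) is an $h$-ideal; for part $3$ the condition $r_{P\setminus I}^I=0$ (resp.\ $r_I^{P\setminus I}=0$) says, by Lemma \ref{21}(2), that $P\setminus I$ (resp.\ $I$) is an $r$-ideal; for part $4$ the simultaneous vanishing of one $h$- and one $r$-quantity yields a biideal. In the cases where the vanishing exponents carry the subscript $P\setminus I$, the surviving indexing set is naturally described in terms of $P\setminus I$ rather than $I$, so I would apply the change of variable $I\mapsto P\setminus I$: this swaps the two tensor factors and permutes the remaining exponents, producing exactly the stated formulas with $I\otimes(P\setminus I)$. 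In the cases where the subscript is $I$, no relabelling is needed and the formula reads off directly with $(P\setminus I)\otimes I$.

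For parts $5$ and $6$ I would first invoke Lemma \ref{21}(3) (resp.\ (4)): the vanishing of both $h$-quantities (resp.\ both $r$-quantities) forces $I$ and $P\setminus I$ to be unions of $h$-irreducible (resp.\ $r$-irreducible) components of $P$. Writing $P=P_1\ldots P_k$ with the $P_i$ $h$-connected, the surviving subsets are then in bijection with the subsets $J\subseteq\{1,\ldots,k\}$ via $I=P_J$. The one remaining step is arithmetic: since distinct $h$-components $P_i,P_j$ with $i<j$ satisfy $x\leq_r y$ for all $x\in P_i$, $y\in P_j$, one reads off $r_{P\setminus I}^I$ and $r_I^{P\setminus I}$ as the sums $\sum|P_i||P_j|$ that coincide with $\alpha_P$ of the appropriate index set after the reindexing $J\mapsto\{1,\ldots,k\}\setminus J$; part $6$ is dual, using $\prodh$ and $\beta_P$. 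Finally, parts $7$ and $8$ are further specializations: three vanishing conditions identify, via Lemma \ref{21}(5)--(6), one of the factorizations $P=(P\setminus I)I$, $P=I(P\setminus I)$, $P=(P\setminus I)\prodh I$ or $P=I\prodh(P\setminus I)$, the single surviving exponent equalling $|P\setminus I||I|$ by Lemma \ref{13}; and when all four vanish, Lemma \ref{13} forces $|I||P\setminus I|=0$, leaving only $I=\emptyset$ and $I=P$, each with coefficient $0^0=1$.

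The main obstacle I anticipate is bookkeeping rather than conceptual: one must track, in every case, which exponent survives, whether the reindexing $I\mapsto P\setminus I$ has swapped the tensor factors, and—in parts $5$ and $6$—the orientation convention relating the order on the irreducible components to the correct exponents $\alpha_P$ and $\beta_P$. None of these steps requires reproving coassociativity or counitality (already granted for $\Delta_q$), since each displayed identity is merely a specialization of the single map $\Delta_q$.
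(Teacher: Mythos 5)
Your proposal is correct and follows essentially the same route as the paper: each specialization kills exactly the terms whose corresponding exponent is positive, Lemma \ref{21} translates the surviving vanishing conditions into $h$-ideals, $r$-ideals, biideals, unions of irreducible components or factorizations, and Lemma \ref{13} supplies the exponent computations in parts 1, 7 and 8. The only cosmetic difference is that for the swapped formulas (the first lines of parts 2--4) the paper invokes the relation $\Delta_{(q_1,q_2,q_3,q_4)}^{op}=\Delta_{(q_2,q_1,q_4,q_3)}$, whereas you perform the substitution $I\mapsto P\setminus I$ directly, which amounts to the same computation.
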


\begin{proof}  We only prove points 1, 2, 5, 7 and 8; the others are proved in the same way.

1. Immediate, as $h_{\P\setminus I}^I+h_I^{P\setminus I}+r_{\P\setminus I}^I+r_I^{P\setminus I}=|I||P\setminus I|$ by lemma \ref{13}.\\

2. By lemma \ref{21}:
$$\Delta_{(q_1,0,q_3,q_4)}(P)=\sum_{\substack{I\subseteq P\\h_{P\setminus I}^I=0}}
q_1^{h_{P\setminus I}^I}q_3^{r_{P\setminus I}^I}q_4^{r_I^{P\setminus I}}(R\setminus I)\otimes I
=\sum_{\mbox{\scriptsize $P\setminus I$ $h$-ideal of $P$}} 
q_1^{h_{P\setminus I}^I}q_3^{r_{P\setminus I}^I}q_4^{r_I^{P\setminus I}}(P\setminus I) \otimes I.$$
As $\Delta_{(0,q_2,q_3,q_4)}=\Delta_{(0,q_2,q_4,q_3)}^{op}$, we obtain also the first assertion. \\

5. By lemma \ref{21}, point 3:
$$\Delta_{(0,0,q_3,q_4)}(P)=\sum_{\substack{I\subseteq P\\h_{P\setminus I}^I=h_I^{P\setminus I}=0}}
q_3^{r_{P\setminus I}^I}q_4^{r_I^{P\setminus I}}(R\setminus I)\otimes I
=\sum_{I\subseteq \{1,\cdots,k\}}q_3^{r_{P\setminus P_I}^{ P_I}}q_4^{r_{ P_I}^{P\setminus  P_I}}
 P_I^\otimes P_{\{1,\cdots,k\}\setminus I},$$
and it is immediate that $r_{P\setminus P_I}^{ P_I}=\alpha_P(I)$ and
$r_{ P_I}^{P\setminus  P_I}=\alpha_P(\{1,\ldots,k\}-I)$.\\

7. By lemma \ref{21}:
$$\Delta_{(0,0,q,0)}(P)=\sum_{\substack{I\subseteq P\\h_{P\setminus I}^I=h_I^{P\setminus I}=r_I^{P\setminus I}=0}}
q^{r_{P\setminus I}^I}(P\setminus I)\otimes I
= \sum_{P_1P_2=P} q^{r_{P_1}^{P_2}}P_1 \otimes P_2= \sum_{P_1P_2=P} q^{|P_1||P_2|}P_1 \otimes P_2.$$
8. Let $I\subseteq P$, such that $h_{P\setminus I}^I=h_I^{P\setminus I}=r_{P\setminus I}^I=r_I^{P\setminus I}=0$.
Then:
$$|I||P\setminus I|=h_{P\setminus I}^I+h_I^{P\setminus I}+r_{P\setminus I}^I+r_I^{P\setminus I}=0,$$
so $I=1$ or $I=P$. \end{proof}\\

{\bf Remark.} In particular, the coproduct defined in section \ref{recalls} is $\Delta_{(1,0,1,1)}$.
The coproduct of deconcatenation, dual of $m$, is $\Delta_{(0,0,1,0)}$ and the coproduct of deconcatenation, dual of $\prodh$, is $\Delta_{(1,0,0,0)}$. \\



\subsection{Compatibilities with the products}

\begin{prop}\label{23}
Let $x,y \in \h_{\PP}$. We put $\Delta_q(x)=\sum x_q'\otimes x_q''$ and $\Delta_q(y)=\sum y_q'\otimes y_q''$,
with the $x_q'$'s, $x_q''$'s, $y_q'$'s, $y_q''$'s homogeneous. Then:
\begin{enumerate}
\item $\displaystyle \Delta_q(xy)=\sum\sum q_3^{|x'_q||y''_q|}q_4^{|x''_q||y'_q|} (x'_qy'_q) \otimes (x''_qy''_q)$.
\item $\displaystyle \Delta_q(x \prodh y)=\sum\sum q_1^{|x'_q||y''_q|}q_2^{|x''_q||y'_q|} (x'_q \prodh y'_q) \otimes (x''_q \prodh y''_q)$.
\end{enumerate}
\end{prop}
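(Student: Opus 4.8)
The plan is to prove both compatibility formulas by a direct combinatorial computation, expanding each side as a sum over plane posets and matching coefficients via a bijection. By linearity it suffices to treat the case where $x=P$ and $y=Q$ are plane posets. I would focus on point 1 (the compatibility of $\Delta_q$ with the product $m$), since point 2 follows by the symmetric argument after applying the involution $\iota$, which exchanges the pair $(q_1,q_2)$ with $(q_3,q_4)$ and swaps $m$ and $\prodh$ (using the relations $m_{(q_1,q_2,q_3,q_4)}\circ(\iota\otimes\iota)=\iota\circ m_{(q_3,q_4,q_1,q_2)}$ from Proposition \ref{15} together with the dual relation for $\Delta_q$ from the proposition dualizing it).

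First I would unfold the left-hand side. Recall that $m(P\otimes Q)$ is the single plane poset $R_0=PQ$ in which every element of $P$ satisfies $x\leq_r y$ for $y\in Q$; equivalently, on $R_0=P\sqcup Q$ the internal orders of $P$ and $Q$ are unchanged and all cross-pairs are $r$-comparable with $P$ to the left. Then $\Delta_q(PQ)=\sum_{I\subseteq R_0} q_1^{h_{R_0\setminus I}^I}q_2^{h_I^{R_0\setminus I}}q_3^{r_{R_0\setminus I}^I}q_4^{r_I^{R_0\setminus I}}\,(R_0\setminus I)\otimes I$, the sum running over all subsets $I$ of $R_0$. The key bookkeeping step is to split each subset as $I=I_P\sqcup I_Q$ with $I_P=I\cap P$ and $I_Q=I\cap Q$, and to observe that because all cross-pairs between $P$ and $Q$ are purely $r$-comparable (with $P$ on the left), the exponents decompose cleanly: the $h$-counts $h_{R_0\setminus I}^I$ and $h_I^{R_0\setminus I}$ involve only pairs internal to $P$ or internal to $Q$, while the cross contributions land entirely in the $r$-counts. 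Concretely the cross-pairs contribute $|I_P|\,|Q\setminus I_Q|$ to $r_{R_0\setminus I}^I$-type count and $|P\setminus I_P|\,|I_Q|$ to the other, giving exactly the factors $q_3^{|x'_q||y''_q|}q_4^{|x''_q||y'_q|}$ once we identify $x'_q\leftrightarrow P\setminus I_P$, $x''_q\leftrightarrow I_P$, $y'_q\leftrightarrow Q\setminus I_Q$, $y''_q\leftrightarrow I_Q$.

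Next I would expand the right-hand side. Writing $\Delta_q(P)=\sum (P\setminus I_P)\otimes I_P$ and $\Delta_q(Q)=\sum (Q\setminus I_Q)\otimes I_Q$ with their own weights, the products $(P\setminus I_P)(Q\setminus I_Q)$ and $(I_P)(I_Q)$ under $m$ are again single plane posets, and one checks that their concatenation reassembles precisely the restriction of $R_0$ to $R_0\setminus I$ and to $I$ respectively. The remaining task is to verify that the product of the four weight-exponents from $\Delta_q(P)$ and $\Delta_q(Q)$, together with the prefactor $q_3^{|P\setminus I_P||I_Q|}q_4^{|I_P||Q\setminus I_Q|}$, reproduces the single weight of the term indexed by $I$ on the left. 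This is an additivity check on the four exponents $h^\bullet_\bullet, r^\bullet_\bullet$ under the decomposition of $R_0$ into its $P$- and $Q$-blocks, entirely analogous to the additivity computations already carried out in the proof of Theorem \ref{14}.

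The main obstacle is this exponent-matching: one must be careful that the internal $h$- and $r$-counts of $P$ and of $Q$ are genuinely preserved when $P$ and $Q$ are placed side by side in $R_0$ (this is exactly the statement that $P,Q$ are plane subposets of $PQ$), and that the \emph{only} new comparabilities are the cross $r$-pairs with the prescribed orientation. Once one records that these cross-pairs split as $r$-comparabilities according to which of the two tensor factors each endpoint falls into, the four exponents add block by block and the weights agree term by term, establishing the claimed identity.
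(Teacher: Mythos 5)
Your plan for point 1 is essentially the paper's own proof: expand $\Delta_q(PQ)$ over subsets of $PQ$, write each subset as $I_P\sqcup I_Q$ with $I_P\subseteq P$, $I_Q\subseteq Q$, use the fact that every cross-pair between $P$ and $Q$ is an $r$-comparability oriented from $P$ to $Q$ (so the two $h$-exponents are block-additive and the cross-pairs land entirely in the two $r$-exponents), and match weights term by term. One slip to correct in the write-up: you assign the cross-contribution $|I_P||Q\setminus I_Q|$ to $r_{(PQ)\setminus I}^{I}$ and $|P\setminus I_P||I_Q|$ to the other count, but it is the reverse. Since $r_X^Y$ counts pairs $(x,y)\in X\times Y$ with $x\leq_r y$, the cross-pairs counted by $r_{(PQ)\setminus I}^{I}$ are those with $x\in P\setminus I_P$ and $y\in I_Q$, contributing $|P\setminus I_P||I_Q|=|x'_q||y''_q|$ to the exponent of $q_3$, and symmetrically $|I_P||Q\setminus I_Q|=|x''_q||y'_q|$ to that of $q_4$; with your swapped assignment you would derive $q_3^{|x''_q||y'_q|}q_4^{|x'_q||y''_q|}$, contradicting the (correct) formula you state as your conclusion. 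For point 2 you genuinely depart from the paper: the paper simply redoes the analogous computation for $\prodh$ ("the proof of the second point is similar"), whereas you deduce it from point 1 by conjugating with the involution $\iota$, using $\iota(\iota(P)\iota(Q))=P\prodh Q$ together with $\Delta_q\circ\iota=(\iota\otimes\iota)\circ\Delta_{(q_3,q_4,q_1,q_2)}$ and the exchange of $(q_1,q_2)$ with $(q_3,q_4)$; this is valid and avoids duplicating the computation, at the price of invoking the compatibility of $\iota$ with both products and with the deformed coproducts, the latter being stated in the paper by duality without proof.
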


\begin{proof} We only prove the first point; the proof of the second point is similar. Let $P,Q \in \PP$. Then:
\begin{eqnarray*}
\Delta_q(P Q)&=&\sum_{I\subseteq P,\: J \subseteq Q}
q_1^{h_{(P Q)\setminus (IJ)}^{IJ}} q_2^{h^{(P Q)\setminus (IJ)}_{IJ}}
q_3^{r_{(P Q)\setminus (IJ)}^{IJ}} q_4^{r^{(P Q)\setminus (IJ)}_{IJ}}
((P Q)\setminus (IJ)) \otimes (IJ)\\
&=&\sum_{I\subseteq P,\: J \subseteq Q}
q_1^{h_{(P Q)\setminus (IJ)}^{IJ}} q_2^{h^{P Q\setminus IJ}_{IJ}}
q_3^{r_{P Q\setminus IJ}^{IJ}} q_4^{r^{P Q\setminus IJ}_{IJ}}
((P\setminus I) (Q\setminus J)) \otimes( IJ).
\end{eqnarray*}
Moreover, for all $I\subseteq P$, $J \subseteq Q$:
\begin{itemize}
\item For all $x \in P\setminus I$, $y\in J$, $x <_r y$ in $PQ$, so $h_{P\setminus I}^J=0$.
Similarly, $h_{Q\setminus J}^I=0$. Hence:
$$h_{P Q\setminus IJ}^{IJ}=h_{P\setminus I}^I+h_{Q\setminus J}^J+h_{P\setminus I}^J+h_{Q\setminus J}^I 
=h_{P\setminus I}^I+h_{Q\setminus J}^J.$$
\item In the same way, $h^{P Q\setminus IJ}_{IJ}=h^{P\setminus I}_I+h^{Q\setminus J}_J+h^{P\setminus I}_J+h^{Q\setminus J}_I 
=h^{P\setminus I}_I+h^{Q\setminus J}_J$.
\item For all $x \in P\setminus I$, $y\in J$, $x <_r y$ in $PQ$, so $r_{P\setminus I}^J=|P\setminus I||J|$.
For all $x \in Q\setminus J$, $y\in I$, $x>_r y$, so $r_{Q\setminus J}^I=0$.  Hence:
$$r_{P Q\setminus IJ}^{IJ}=r_{P\setminus I}^I+r_{Q\setminus J}^J+r_{P\setminus I}^J+r_{Q\setminus J}^I 
=h_{P\setminus I}^I+h_{Q\setminus J}^J+|P\setminus I||J|.$$
\item In the same way, $r^{P Q\setminus IJ}_{IJ}=r^{P\setminus I}_I+r^{Q\setminus J}_J+r^{P\setminus I}_J+r^{Q\setminus J}_I 
=r^{P\setminus I}_I+r^{Q\setminus J}_J+|I||Q\setminus J|$.
\end{itemize}
So:
$$\Delta_q(P Q)=\sum \sum q_3^{|P_q'||Q_q''|}q_4^{|P_q''||Q_q'|} (P_q'Q_q') \otimes (P_q''Q_q''),$$
which is the announced formula. \end{proof}\\

{\bf Examples.} \begin{enumerate}
\item If $q_3=1$, then $(\h_{\PP},m,\Delta_q)$ is a braided Hopf algebra. The braiding is given, for all $P,Q \in \PP$, by:
$$c_{q_4}(P\otimes Q)=q_4^{|P||Q|} Q \otimes P.$$
In particular, if $q_4=1$, it is a Hopf algebra; if $q_4=0$, it is an infinitesimal Hopf algebra.
\item If $q_4=0$, the compatibility becomes the following: 
$$\Delta_q(xy)=\sum q_3^{|x||y''_q|} (xy'_q) \otimes y''_q +\sum q_3^{|x'_q||y|} x'_q \otimes (x''_qy)-q_3^{|x||y|} x \otimes y.$$
In particular, if $q_3=1$, then it is an infinitesimal Hopf algebra.
\item If $q_2=q_4=0$,  then for all $x,y \in \h_{\PP}$:
\begin{eqnarray*}
\Delta_q(x y)&=&(xy) \otimes 1+1\otimes (xy)+\varepsilon(x)(\Delta_q(y)-y\otimes 1-1\otimes y)\\
&&+\varepsilon(y)(\Delta_q(x)-x \otimes 1-1 \otimes x)+\varepsilon(x)\varepsilon(y)1\otimes 1.
\end{eqnarray*}
In other terms, for all $x,y \in \h_{\PP}$, such that $\varepsilon(x)=\varepsilon(y)=0$, $x y$ is primitive.
\end{enumerate}

\section{Self-duality results}

\subsection{A first pairing on $\h_q$}

{\bf Notations.} If $P,Q \in \PP$, we denote by $Bij(P,Q)$ the set of bijections from $P$ to $Q$.

\begin{defi}
Let $P,Q$ be two double posets and let $\sigma\in Bij(P,Q)$. We put:
$$\left\{\begin{array}{rcl}
\phi_1(\sigma)&=&\sharp\{(x,y)\in P^2\mid x<_h y \mbox{ and  }\sigma(x)<_h \sigma(y)\}\\
&&+\sharp\{(x,y)\in P^2\mid x<_h y \mbox{ and  }\sigma(x)>_h \sigma(y)\}\\
&&+\sharp\{(x,y)\in P^2\mid x<_h y \mbox{ and  }\sigma(x)<_r \sigma(y)\}\\
&&+\sharp\{(x,y)\in P^2\mid x<_r y \mbox{ and  }\sigma(x)<_h \sigma(y)\},\\[2mm]
\phi_2(\sigma)&=&\sharp\{(x,y)\in P^2\mid x<_h y \mbox{ and  }\sigma(x)<_h \sigma(y)\}\\
&&+\sharp\{(x,y)\in P^2\mid x<_h y \mbox{ and  }\sigma(x)>_h \sigma(y)\}\\
&&+\sharp\{(x,y)\in P^2\mid x<_h y \mbox{ and  }\sigma(x)>_r \sigma(y)\}\\
&&+\sharp\{(x,y)\in P^2\mid x<_r y \mbox{ and  }\sigma(x)>_h \sigma(y)\},\\[2mm]
\phi_3(\sigma)&=&\sharp\{(x,y) \in P^2\mid x<_r y \mbox{ and }\sigma(x)<_r \sigma(y)\},\\[2mm]
\phi_4(\sigma)&=&\sharp\{(x,y) \in P^2\mid x<_r y \mbox{ and }\sigma(x)>_r \sigma(y)\}.
\end{array}\right.$$
\end{defi}

\begin{lemma}\label{25}
Let $P,Q$ be two double posets and let $\sigma\in Bij(P,Q)$. For all $i\in \{1,\ldots,4\}$, $\phi_i(\sigma^{-1})=\phi_i(\sigma)$.
\end{lemma}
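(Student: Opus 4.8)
The plan is to reduce all four identities to two elementary symmetry properties of a single auxiliary counting function, thereby avoiding a separate bijective argument for each $\phi_i$. For two binary relations $\prec_1,\prec_2$, each being one of $<_h,>_h,<_r,>_r$, and for $\sigma\in Bij(P,Q)$, I would set
$$N(\prec_1,\prec_2;\sigma)=\sharp\{(x,y)\in P^2\mid x\prec_1 y\text{ and }\sigma(x)\prec_2\sigma(y)\},$$
where $\prec_1$ is read in $P$ and $\prec_2$ in $Q$. Reading the definitions off directly gives $\phi_3=N(<_r,<_r)$, $\phi_4=N(<_r,>_r)$, while $\phi_1$ and $\phi_2$ are the prescribed four-term sums, whose first two summands $N(<_h,<_h)$ and $N(<_h,>_h)$ are common to both.

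The two properties I would establish are, first, the \emph{inverse-swap} identity
$$N(\prec_1,\prec_2;\sigma^{-1})=N(\prec_2,\prec_1;\sigma),$$
obtained from the substitution $x=\sigma^{-1}(u),\ y=\sigma^{-1}(v)$, which turns the indexing set of the left-hand side (pairs $(u,v)\in Q^2$ with $u\prec_1 v$ and $\sigma^{-1}(u)\prec_2\sigma^{-1}(v)$) bijectively into the pairs $(x,y)\in P^2$ with $\sigma(x)\prec_1\sigma(y)$ and $x\prec_2 y$, which is exactly the set counted by $N(\prec_2,\prec_1;\sigma)$; and, second, the \emph{coordinate-swap} identity
$$N(\prec_1,\prec_2;\sigma)=N(\prec_1^{op},\prec_2^{op};\sigma),$$
where $\prec^{op}$ is the opposite relation, so that $<_h,>_h$ are mutually opposite and likewise $<_r,>_r$, coming from the involution $(x,y)\mapsto(y,x)$.

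Granting these, the conclusion follows by bookkeeping. For $\phi_3$ the inverse-swap alone suffices, since both slots carry $<_r$; for $\phi_4$ one applies inverse-swap and then coordinate-swap. For $\phi_1$ and $\phi_2$ I would check term by term: the common summand $N(<_h,<_h)$ is fixed by inverse-swap, and $N(<_h,>_h)$ is fixed by inverse-swap followed by coordinate-swap. The remaining two summands are exchanged with one another: in $\phi_1$ the inverse-swap sends $N(<_h,<_r)\leftrightarrow N(<_r,<_h)$ directly, while in $\phi_2$ it sends $N(<_h,>_r)$ and $N(<_r,>_h)$ to $N(>_r,<_h)$ and $N(>_h,<_r)$, which a further coordinate-swap returns to $N(<_r,>_h)$ and $N(<_h,>_r)$. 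In each case summing reproduces the same four terms, giving $\phi_i(\sigma^{-1})=\phi_i(\sigma)$.

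There is no conceptual obstacle; the whole content is the organization of the eight ordered relation-pairs so that the two identities line up. The one delicate point is $\phi_2$, where the inverse-swap produces $>$-flavoured terms lying outside the prescribed family $\{<_h,>_h,<_r,>_r\}$ in the wrong slots, and it is precisely the coordinate-swap identity that restores them; keeping track of which of the two moves is required for each summand is the only place where care is needed. Note also that planarity is never used, so the argument is valid for arbitrary double posets, as the statement demands.
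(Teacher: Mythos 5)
Your proof is correct and follows essentially the same route as the paper: both rest on the re-indexing substitution $x=\sigma^{-1}(u)$, $y=\sigma^{-1}(v)$ together with the transposition $(x,y)\mapsto(y,x)$ of pairs. You merely package these two moves as explicit symmetry identities of an auxiliary counting function $N$ and check all four $\phi_i$ systematically, whereas the paper performs the same manipulations inline for $\phi_1$ (using the coordinate swap implicitly) and declares the remaining cases similar.
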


\begin{proof} Indeed, as $\sigma$ is a bijection:
\begin{eqnarray*}
\phi_1(\sigma^{-1})&=&\sharp\{(x,y)\in Q^2\mid x<_h y \mbox{ and  }\sigma^{-1}(x)<_h \sigma^{-1}(y)\}\\
&&+\sharp\{(x,y)\in P^2\mid x<_h y \mbox{ and  }\sigma^{-1}(x)>_h \sigma^{-1}(y)\}\\
&&+\sharp\{(x,y)\in P^2\mid x<_h y \mbox{ and  }\sigma^{-1}(x)<_r \sigma^{-1}(y)\}\\
&&+\sharp\{(x,y)\in P^2\mid x<_r y \mbox{ and  }\sigma^{-1}(x)<_h \sigma^{-1}(y)\}\\
&=&\sharp\{(x,y)\in P^2\mid x<_h y \mbox{ and  }\sigma(x)<_h \sigma(y)\}\\
&&+\sharp\{(x,y)\in P^2\mid x>_h y \mbox{ and  }\sigma(x)<_h \sigma(y)\}\\
&&+\sharp\{(x,y)\in P^2\mid x<_h y \mbox{ and  }\sigma(x)<_r \sigma(y)\}\\
&&+\sharp\{(x,y)\in P^2\mid x<_r y \mbox{ and  }\sigma(x)<_h \sigma(y)\}\\
&=&\phi_1(\sigma).
\end{eqnarray*}
The other equalities are proved similarly. \end{proof}

\begin{lemma}\label{26}
Let $P_1,P_2,Q$ be double posets. There is a bijection:
$$\left\{\begin{array}{rcl}
Bij(P_1P_2,Q)&\longrightarrow&\displaystyle \bigcup_{I\subseteq Q}Bij(P_1,R\setminus I)\times R(P_2,I)\\
\sigma&\longrightarrow&(\sigma_{\mid P_1},\sigma_{\mid P_2}), \mbox{ with }I=\sigma(P_2).
\end{array}\right.$$
Let $\sigma \in Bij(P_1P_2,Q)$ and let $(\sigma_1,\sigma_2)$ be its image by this bijection. Then:
$$\left\{\begin{array}{rcl}
\phi_1(\sigma)&=&\phi_1(\sigma_1)+\phi_1(\sigma_2)+h_{Q\setminus I}^I,\\[2mm]
\phi_2(\sigma)&=&\phi_2(\sigma_1)+\phi_2(\sigma_2)+h_I^{Q\setminus I},\\[2mm]
\phi_3(\sigma)&=&\phi_3(\sigma_1)+\phi_3(\sigma_2)+r_{Q\setminus I}^I,\\[2mm]
\phi_4(\sigma)&=&\phi_4(\sigma_1)+\phi_4(\sigma_2)+r_I^{Q\setminus I}.
\end{array}\right.$$
\end{lemma}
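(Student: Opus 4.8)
The plan is to prove the statement in two stages: first the underlying bijection of sets, then the four additivity formulas for the statistics $\phi_i$. For the bijection, recall that $P_1P_2=P_1\sqcup P_2$ as a set, so a bijection $\sigma:P_1P_2\to Q$ is the same datum as the subset $I=\sigma(P_2)\subseteq Q$ together with the two restrictions $\sigma_1=\sigma_{\mid P_1}:P_1\to Q\setminus I$ and $\sigma_2=\sigma_{\mid P_2}:P_2\to I$. Conversely, any choice of $I\subseteq Q$ and of bijections $P_1\to Q\setminus I$ and $P_2\to I$ glues back to a unique $\sigma$, which yields the claimed bijection onto $\bigcup_{I\subseteq Q}Bij(P_1,Q\setminus I)\times Bij(P_2,I)$ (reading the index set of the statement with $Q\setminus I$ and $Bij(P_2,I)$ in place of the misprinted $R\setminus I$ and $R(P_2,I)$).

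For the statistics I would fix $\sigma$, set $I=\sigma(P_2)$, and partition every ordered pair $(x,y)\in(P_1P_2)^2$ counted in $\phi_i(\sigma)$ according to which of $P_1,P_2$ contains $x$ and which contains $y$. The two diagonal cases $x,y\in P_1$ and $x,y\in P_2$ contribute exactly $\phi_i(\sigma_1)$ and $\phi_i(\sigma_2)$: since $P_1,P_2$ are subposets of $P_1P_2$ and $Q\setminus I,I$ are subposets of $Q$, every relation $<_h,<_r$ appearing in the definition of $\phi_i$ is inherited, and the restriction of $\sigma$ to each block is precisely $\sigma_1$ or $\sigma_2$. It then remains only to evaluate the two off-diagonal cases.

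The key structural input is the definition of the composition product: for $x\in P_1$ and $y\in P_2$ one has $x<_r y$ in $P_1P_2$, while $x$ and $y$ are incomparable for $\leq_h$; for $x\in P_2$, $y\in P_1$ one has instead $y<_r x$ and again $\leq_h$-incomparability. Consequently, in each $\phi_i$ every summand whose source condition involves $x<_h y$ (the first three of the four defining summands of $\phi_1$ and of $\phi_2$) receives no off-diagonal contribution, since $x<_h y$ forces $x,y$ into the same block. The only off-diagonal contribution comes from the summand whose source condition is $x<_r y$, namely the fourth summand of $\phi_1$ and of $\phi_2$ and the single summand of $\phi_3,\phi_4$; off-diagonal this holds exactly when $x\in P_1$ and $y\in P_2$, so that $\sigma(x)\in Q\setminus I$ and $\sigma(y)\in I$. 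Counting these pairs by the relevant image condition gives $\sharp\{(a,b)\in(Q\setminus I)\times I\mid a<_h b\}=h_{Q\setminus I}^I$ for $\phi_1$, similarly $h_I^{Q\setminus I}$ for $\phi_2$, and $r_{Q\setminus I}^I$, $r_I^{Q\setminus I}$ for $\phi_3,\phi_4$, which are exactly the four correction terms.

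I expect the main obstacle to be purely bookkeeping: one must verify line by line in the four-term definitions of $\phi_1$ and $\phi_2$ that no off-diagonal pair contributes to the three $<_h$-source summands, and that the single $<_r$-source summand picks up its correction only from the $P_1\times P_2$ direction (the $P_2\times P_1$ direction giving $y<_r x$, hence no contribution). Once this orientation is pinned down, matching each surviving count with the appropriate $h_{Q\setminus I}^I$, $h_I^{Q\setminus I}$, $r_{Q\setminus I}^I$ or $r_I^{Q\setminus I}$ is immediate from their definitions, and no computation beyond this case split is required.
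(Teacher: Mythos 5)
Your proposal is correct and follows essentially the same route as the paper: both decompose the pairs $(x,y)$ according to the blocks $P_1,P_2$, use the fact that cross pairs in $P_1\times P_2$ satisfy $x<_r y$ (hence are $\leq_h$-incomparable) while $P_2\times P_1$ pairs contribute nothing, and identify the off-diagonal counts with $h_{Q\setminus I}^I$, $h_I^{Q\setminus I}$, $r_{Q\setminus I}^I$, $r_I^{Q\setminus I}$. The paper writes this out only for $\phi_3$ and declares the other cases similar, whereas you give the uniform reason (only the $<_r$-source summand can receive off-diagonal contributions), and you also correctly read $R\setminus I$ and $R(P_2,I)$ as the misprints for $Q\setminus I$ and $Bij(P_2,I)$.
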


\begin{proof} We put $P=P_1P_2$. if $x \in P_1$ and $y\in P_2$, then $x<_r y$, so:
\begin{eqnarray*}
\phi_3(\sigma)&=&\sharp\{(x,y) \in P_1^2\mid x<_r y \mbox{ and }\sigma(x)<_r \sigma(y)\}\\
&&+\sharp\{(x,y) \in P_2^2\mid x<_r y \mbox{ and }\sigma(x)<_r \sigma(y)\}\\
&&+\sharp\{(x,y) \in P_1 \times P_2\mid x<_r y \mbox{ and }\sigma(x)<_r \sigma(y)\}\\
&&+\sharp\{(x,y) \in P_2 \times P_1\mid x<_r y \mbox{ and }\sigma(x)<_r \sigma(y)\}\\
&=&\phi_3(\sigma)+\phi_3(\sigma_2)+\sharp\{(x,y) \in P_1 \times P_2\mid \sigma(x)<_r \sigma(y)\}+0\\
&=&\phi_3(\sigma)+\phi_3(\sigma_2)+\sharp\{(x,y) \in (Q\setminus I)\times I\mid x <_r y\}\\
&=&\phi_3(\sigma_1)+\phi_3(\sigma_2)+r_{Q\setminus I}^I.
\end{eqnarray*}
The other equalities are proved similarly. \end{proof}

\begin{theo} 
Let $q=(q_1,q_2,q_3,q_4)\in K^4$. We define a pairing on $\h_{\PP}$ by:
$$\langle P,Q\rangle_q=\sum_{\sigma \in Bij(P,Q)} q_1^{\phi_1(\sigma)}
 q_2^{\phi_2(\sigma)} q_3^{\phi_3(\sigma)}q_3^{\phi_4(\sigma)}.$$
 This pairing is symmetric and for all $x,y,z \in \h_{\PP}$, $\langle xy,z\rangle_q=\langle x \otimes y,\Delta_q(z)\rangle_q$.
\end{theo}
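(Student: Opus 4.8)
The plan is to verify both assertions on the basis of plane posets and then extend by bilinearity of the pairing and of $\Delta_q$; so I fix $P,Q,R\in\PP$ throughout. The symmetry and the adjunction are independent statements, and each follows by directly assembling one of the two preceding lemmas.

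For the symmetry, I would exploit the involution $\sigma\mapsto\sigma^{-1}$, which is a bijection from $Bij(P,Q)$ onto $Bij(Q,P)$. By Lemma \ref{25}, $\phi_i(\sigma^{-1})=\phi_i(\sigma)$ for every $i\in\{1,\ldots,4\}$, so the monomial $q_1^{\phi_1(\sigma)}q_2^{\phi_2(\sigma)}q_3^{\phi_3(\sigma)}q_4^{\phi_4(\sigma)}$ attached to $\sigma$ coincides with the one attached to $\sigma^{-1}$. Summing over $\sigma\in Bij(P,Q)$ and reindexing by $\tau=\sigma^{-1}$ immediately yields $\langle P,Q\rangle_q=\langle Q,P\rangle_q$.

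For the adjunction, I would first expand the right-hand side using the definition of $\Delta_q$ together with the convention $\langle a\otimes b,c\otimes d\rangle_q=\langle a,c\rangle_q\langle b,d\rangle_q$ on the tensor square, obtaining
$$\langle P\otimes Q,\Delta_q(R)\rangle_q=\sum_{I\subseteq R}q_1^{h_{R\setminus I}^I}q_2^{h_I^{R\setminus I}}q_3^{r_{R\setminus I}^I}q_4^{r_I^{R\setminus I}}\langle P,R\setminus I\rangle_q\,\langle Q,I\rangle_q.$$
On the left-hand side I would apply the bijection of Lemma \ref{26}, taken with $P_1=P$, $P_2=Q$ and target $R$, which writes every $\sigma\in Bij(PQ,R)$ uniquely as a pair $(\sigma_1,\sigma_2)\in Bij(P,R\setminus I)\times Bij(Q,I)$ with $I=\sigma(Q)$. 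Substituting the additivity formulas of Lemma \ref{26}, namely $\phi_1(\sigma)=\phi_1(\sigma_1)+\phi_1(\sigma_2)+h_{R\setminus I}^I$ and the three analogous identities for $\phi_2,\phi_3,\phi_4$, the monomial attached to $\sigma$ factors as the product of $q_1^{h_{R\setminus I}^I}q_2^{h_I^{R\setminus I}}q_3^{r_{R\setminus I}^I}q_4^{r_I^{R\setminus I}}$ with the monomials attached to $\sigma_1$ and to $\sigma_2$. Grouping the resulting triple sum according to $I$ and factoring the inner sums over $\sigma_1$ and $\sigma_2$ reproduces exactly the displayed right-hand side, which proves the identity.

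The only genuine bookkeeping is to check that the four extra exponents $h_{R\setminus I}^I$, $h_I^{R\setminus I}$, $r_{R\setminus I}^I$, $r_I^{R\setminus I}$ produced by Lemma \ref{26} are attached to the correct parameters $q_1,q_2,q_3,q_4$ and match term by term the coefficients appearing in $\Delta_q(R)$; since both occur in the same order, this verification is immediate. Thus the whole substance of the argument is already carried by Lemmas \ref{25} and \ref{26}, and I do not anticipate any real obstacle beyond this indexing check — the theorem is a clean assembly of the two lemmas.
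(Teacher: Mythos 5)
Your proposal is correct and follows essentially the same route as the paper: symmetry via the involution $\sigma\mapsto\sigma^{-1}$ combined with Lemma \ref{25}, and the adjunction $\langle xy,z\rangle_q=\langle x\otimes y,\Delta_q(z)\rangle_q$ via the bijection and additivity formulas of Lemma \ref{26}, with the extra exponents matching the coefficients in $\Delta_q$ term by term. No gaps; this is the paper's own argument.
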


\begin{proof}
Let $P,Q$ be two double posets. Then, by lemma \ref{25}:
\begin{eqnarray*}
\langle P,Q\rangle_q&=&\sum_{\sigma \in Bij(P,Q)} q_1^{\phi_1(\sigma)}
 q_2^{\phi_2(\sigma)} q_3^{\phi_3(\sigma)}q_3^{\phi_4(\sigma)}\\
 &=&\sum_{\sigma \in Bij(Q,P)}q_1^{\phi_1(\sigma^{-1})} q_2^{\phi_2(\sigma^{-1})}
q_3^{\phi_3(\sigma^{-1})}q_3^{\phi_4(\sigma^{-1})}\\
 &=&\sum_{\sigma \in Bij(Q,P)} q_1^{\phi_1(\sigma)} q_2^{+\phi_2(\sigma)} q_3^{\phi_3(\sigma)}q_3^{\phi_4(\sigma)}\\
 &=&\langle Q,P\rangle_q.
\end{eqnarray*}
So this pairing is symmetric. Let $P_1,P_2,Q$ be three double posets. By lemma \ref{26}:
\begin{eqnarray*}
\langle P_1P_2,Q\rangle_q&=&\sum_{\sigma \in Bij(P_1P_2,Q)} q_1^{\phi_1(\sigma)}
 q_2^{\phi_2(\sigma)} q_3^{\phi_3(\sigma)}q_3^{\phi_4(\sigma)}\\
 &=&\sum_{I\subseteq Q}\sum_{\sigma_1 \in Bij(P_1,Q\setminus I)}\sum_{\sigma_2\in Bij(P_2,I)}
 q_1^{\phi_1(\sigma_1)+\phi_1(\sigma_2)+h_{Q\setminus I}^I}\\
 &&\times q_2^{\phi_2(\sigma_1)+\phi_2(\sigma_2)+h^{Q\setminus I}_I}
 q_3^{\phi_3(\sigma_1)+\phi_3(\sigma_2)+r_{Q\setminus I}^I}
 q_4^{\phi_4(\sigma_1)+\phi_4(\sigma_2)+r^{Q\setminus I}_I}\\
 &=&\sum_{I\subseteq Q}q_1^{h_{R\setminus I}^I}q_2^{h_I^{R\setminus I}}q_3^{r_{R\setminus I}^I}q_4^{r_I^{R\setminus I}}
\langle P_1,Q\setminus I \rangle_q\langle P_2,I\rangle_q\\
&=&\langle P_1\otimes P_2,\Delta_q(Q)\rangle_q.
\end{eqnarray*}
So $\langle-,-\rangle_q$ is a Hopf pairing. \end{proof} \\

{\bf Remark.} More generally, adapting this proof, it is possible to show that for all $x,y,z \in \h_{\PP}$:
$$\langle m_{(0,0,q,0)}(x\otimes y),\Delta(z)\rangle_{(q_1,q_2,q_3,q_4)}
=\langle x \otimes y,\Delta_{(qq_1,qq_2,qq_3,qq_4)}(z)\rangle_{(q_1,q_2,q_3,q_4)}.$$

{\bf Examples.} $\langle \tun, \tun \rangle_q=1$. The pairing of plane posets of degree $2$  is given by the following array:
$$\begin{array}{c|c|c}
&\tdeux&\tun\tun\\
\hline \tdeux&2q_1q_2&q_1+q_2\\
\hline \tun\tun&q_1+q_2&q_3+q_4
\end{array}$$

\begin{prop}
For all double posets $P,Q$:
$$\langle P,Q \rangle_{(q_1,0,q_3,q_4)}
=\sum_{\sigma \in S(P,Q)} q_1^{\phi_1(\sigma)}q_3^{\phi_3(\sigma)}q_4^{\phi_4(\sigma)}.$$
Consequently, $\langle-,-\rangle_{(1,0,1,1)}$ is the pairing $\langle-,-\rangle$ described in section \ref{recalls}.
\end{prop}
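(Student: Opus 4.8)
The plan is to start from the definition of $\langle P,Q\rangle_q$ and specialize the second parameter to $q_2=0$. Using the conventions $0^0=1$ and $0^k=0$ for $k\geq 1$, the factor $q_2^{\phi_2(\sigma)}$ acts exactly as the indicator of the condition $\phi_2(\sigma)=0$: every bijection $\sigma$ with $\phi_2(\sigma)\geq 1$ is annihilated, while those with $\phi_2(\sigma)=0$ survive with weight $q_1^{\phi_1(\sigma)}q_3^{\phi_3(\sigma)}q_4^{\phi_4(\sigma)}$. Thus
$$\langle P,Q\rangle_{(q_1,0,q_3,q_4)}=\sum_{\substack{\sigma\in Bij(P,Q)\\ \phi_2(\sigma)=0}} q_1^{\phi_1(\sigma)}q_3^{\phi_3(\sigma)}q_4^{\phi_4(\sigma)},$$
and it remains to prove the set equality $\{\sigma\in Bij(P,Q)\mid \phi_2(\sigma)=0\}=S(P,Q)$.

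To establish this equality I would analyze the four summands of $\phi_2(\sigma)$ separately, writing $A_2,B_2,C_2,D_2$ for the sets of pairs they count (those with $x<_h y$ and respectively $\sigma(x)<_h\sigma(y)$, $\sigma(x)>_h\sigma(y)$, $\sigma(x)>_r\sigma(y)$, and those with $x<_r y$ and $\sigma(x)>_h\sigma(y)$). As $\phi_2(\sigma)$ is the sum of their cardinalities, $\phi_2(\sigma)=0$ if and only if all four are empty. The crucial ingredient is the incompatibility condition of Definition \ref{1}: two distinct elements of a plane poset are comparable for exactly one of $\leq_h,\leq_r$, and in a single direction. Hence, for a pair $x<_h y$, the image pair $(\sigma(x),\sigma(y))$ satisfies exactly one of $\sigma(x)<_h\sigma(y)$, $\sigma(x)>_h\sigma(y)$, $\sigma(x)<_r\sigma(y)$, $\sigma(x)>_r\sigma(y)$; ruling out the first, second and fourth (that is, emptiness of $A_2,B_2,C_2$) is precisely the assertion that $x<_h y$ forces $\sigma(x)<_r\sigma(y)$, which is the first defining implication of $S(P,Q)$.

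Symmetrically, I would read the second defining implication, $\sigma(x)<_h\sigma(y)\Rightarrow x<_r y$, by contraposition: a violation means $\sigma(x)<_h\sigma(y)$ together with one of $x<_h y$, $x>_h y$, $x>_r y$. After the relabeling $(x,y)\mapsto(y,x)$ in the last two cases, these three possibilities correspond exactly to nonempty $A_2$, $B_2$ and $D_2$ respectively, so the second condition of $S(P,Q)$ holds if and only if $A_2,B_2,D_2$ are all empty. Combining the two analyses, $\sigma\in S(P,Q)$ if and only if $A_2=B_2=C_2=D_2=\emptyset$, i.e. if and only if $\phi_2(\sigma)=0$; this gives the claimed set equality and hence the displayed formula. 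For the final assertion I would then substitute $q=(1,0,1,1)$: each weight $q_1^{\phi_1}q_3^{\phi_3}q_4^{\phi_4}$ becomes $1$, so $\langle P,Q\rangle_{(1,0,1,1)}=Card(S(P,Q))=\langle P,Q\rangle$, recovering the first classical pairing of Section \ref{recalls}.

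I expect the main obstacle to be the middle step: matching each of the four pair-counts in $\phi_2$ with the two implications defining $S(P,Q)$, and in particular keeping careful track of the relabeling $(x,y)\mapsto(y,x)$ in the contributions coming from $B_2$ and $D_2$. This matching genuinely relies on the plane (incompatibility) structure of Definition \ref{1}, which is what guarantees that excluding the three "wrong" comparabilities of an image pair leaves exactly the relation $\leq_r$; this is the only place where the argument must go beyond the purely formal bijection-counting of Lemmas \ref{25} and \ref{26}.
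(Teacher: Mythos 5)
Your proposal is correct and follows essentially the same route as the paper: specialize $q_2=0$ so that $q_2^{\phi_2(\sigma)}$ becomes the indicator of $\phi_2(\sigma)=0$, then use the plane-poset incompatibility condition to identify $\{\sigma\in Bij(P,Q)\mid \phi_2(\sigma)=0\}$ with $S(P,Q)$. In fact you are slightly more thorough than the paper, which only verifies the inclusion of the surviving bijections into $S(P,Q)$ and leaves the converse inclusion (that every $\sigma\in S(P,Q)$ indeed has $\phi_2(\sigma)=0$, needed for the two sums to coincide) implicit, whereas you prove the set equality in both directions, including the relabeling $(x,y)\mapsto(y,x)$ bookkeeping for the $B_2$ and $D_2$ contributions.
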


\begin{proof} Let $\sigma \in Bij(P,Q)$, such that the contribution of $\sigma$ in the sum defining $\langle P,Q \rangle_{(q_1,0,q_3,q_4)}$
is non zero. As $q_2=0$, necessarily $\phi_2(\sigma)=0$. Hence, if $x<_h y$ in $P$, then $\sigma(x)<_r \sigma(y)$ in $Q$;
if $\sigma(x)<_h \sigma(y)$ in $Q$, then $x<_r y$ in $P$. So $\sigma \in S(P,Q)$. \end{proof}\\

\subsection{Properties of the pairing}

\begin{lemma}\label{29}
Let $P_1,P_2,Q$ be double posets. There is a bijection:
$$\left\{\begin{array}{rcl}
Bij(P_1\prodh P_2,Q)&\longrightarrow&\displaystyle \bigcup_{I\subseteq Q}Bij(P_1,R\setminus I)\times R(P_2,I)\\
\sigma&\longrightarrow&(\sigma_{\mid P_1},\sigma_{\mid P_2}), \mbox{ with }I=\sigma(P_2).
\end{array}\right.$$
Let $\sigma \in Bij(P_1P_2,Q)$ and let $(\sigma_1,\sigma_2)$ be its image by this bijection. Then:
$$\left\{\begin{array}{rcl}
\phi_1(\sigma)&=&\phi_1(\sigma_1)+\phi_1(\sigma_2)+h_{Q\setminus I}^I+h_I^{Q\setminus I}+r_{Q\setminus I}^I,\\[2mm]
\phi_2(\sigma)&=&\phi_2(\sigma_1)+\phi_2(\sigma_2)+h_{Q\setminus I}^I+h_I^{Q\setminus I}+r^{Q\setminus I}_I,\\[2mm]
\psi_2(\sigma)&=&\psi_2(\sigma_1)+\psi_2(\sigma_2)+h_{Q\setminus I}^I+h_I^{Q\setminus I},\\[2mm]
\phi_3(\sigma)&=&\phi_3(\sigma_1)+\phi_3(\sigma_2),\\[2mm]
\phi_4(\sigma)&=&\phi_4(\sigma_1)+\phi_4(\sigma_2).
\end{array}\right.$$
\end{lemma}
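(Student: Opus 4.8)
The plan is to mirror, essentially verbatim, the proof of Lemma \ref{26}, exchanging the role played there by the cross $r$-relation with the cross $h$-relation: in $P=P_1\prodh P_2$ one has $x<_h y$ and $x,y$ incomparable for $\leq_r$ whenever $x\in P_1$ and $y\in P_2$, whereas in $P_1P_2$ the across-relation was $x<_r y$. First I would record the bijection. A bijection $\sigma:P_1\prodh P_2\longrightarrow Q$ restricts to $\sigma_1=\sigma_{\mid P_1}$ and $\sigma_2=\sigma_{\mid P_2}$; setting $I=\sigma(P_2)$ gives $\sigma_1\in Bij(P_1,Q\setminus I)$ and $\sigma_2\in Bij(P_2,I)$, and conversely a subset $I\subseteq Q$ together with a pair in $Bij(P_1,Q\setminus I)\times Bij(P_2,I)$ glues back to a unique $\sigma$. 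This is literally the same underlying set map as in Lemma \ref{26}; the double poset structure carried on the product does not enter the definition of the correspondence, only the subsequent count.

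Next I would compute each $\phi_i(\sigma)$ by splitting the pairs $(x,y)\in P^2$ into the four position cases: both in $P_1$, both in $P_2$, $x\in P_1$ with $y\in P_2$, and $x\in P_2$ with $y\in P_1$. The first two cases reassemble into $\phi_i(\sigma_1)$ and $\phi_i(\sigma_2)$. The structural input governing the remaining two is decisive: for $x\in P_1,y\in P_2$ we have $x<_h y$ with $x,y$ incomparable for $\leq_r$, while for $x\in P_2,y\in P_1$ we have $x>_h y$, again with $r$-incomparability, so that the latter case contributes to no $\phi_i$ at all. Carrying this out, $\phi_3$ and $\phi_4$ count only $r$-comparable source pairs, so both cross cases vanish and $\phi_3(\sigma)=\phi_3(\sigma_1)+\phi_3(\sigma_2)$, $\phi_4(\sigma)=\phi_4(\sigma_1)+\phi_4(\sigma_2)$. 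For $\phi_1$ only the case $x\in P_1,y\in P_2$ survives, and its three relevant subterms translate, through $\sigma$ restricted to a bijection onto $(Q\setminus I)\times I$, into $h_{Q\setminus I}^I$ (from $\sigma(x)<_h\sigma(y)$), $h_I^{Q\setminus I}$ (from $\sigma(x)>_h\sigma(y)$), and $r_{Q\setminus I}^I$ (from $\sigma(x)<_r\sigma(y)$); the fourth defining subterm of $\phi_1$ requires $x<_r y$ and so contributes nothing. The analogous bookkeeping for $\phi_2$ produces $h_{Q\setminus I}^I+h_I^{Q\setminus I}+r_I^{Q\setminus I}$, and for the auxiliary quantity $\psi_2$ — which, from the shape of its correction term, is the count of pairs $x<_h y$ whose images are comparable for $\leq_h$, i.e.\ the two subterms common to $\phi_1$ and $\phi_2$ — the cross contribution from $x\in P_1,y\in P_2$ is exactly $h_{Q\setminus I}^I+h_I^{Q\setminus I}$.

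The only genuinely delicate point, and the one I would write out in full while dismissing the rest with ``the other equalities are proved similarly'' exactly as Lemma \ref{26} does, is orienting the cross-counts correctly when passing to the $h_{\bullet}^{\bullet}$ and $r_{\bullet}^{\bullet}$ notation: a subterm $\sigma(x)>_h\sigma(y)$ with $\sigma(x)\in Q\setminus I$ and $\sigma(y)\in I$ must be reread as the pair $(\sigma(y),\sigma(x))\in I\times(Q\setminus I)$ with first entry $<_h$ second, hence $h_I^{Q\setminus I}$ and not $h_{Q\setminus I}^I$, and likewise $\sigma(x)>_r\sigma(y)$ yields $r_I^{Q\setminus I}$. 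Keeping these orientations straight across the four maps $\phi_1,\phi_2,\phi_3,\phi_4$ is the whole computational content; everything conceptual is already supplied by Lemma \ref{26} together with the single observation that the across-factor relation in $P_1\prodh P_2$ is an $h$-relation rather than an $r$-relation.
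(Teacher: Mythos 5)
Your proof is correct and follows essentially the same route as the paper: decompose the pairs $(x,y)$ according to their position in $P_1$, $P_2$, or across the two factors, use the fact that cross pairs in $P_1\prodh P_2$ satisfy $x<_h y$ (and are $\leq_r$-incomparable) so that only the $P_1\times P_2$ case contributes, and then translate the surviving subterms through $\sigma$ into $h_{Q\setminus I}^I$, $h_I^{Q\setminus I}$, $r_{Q\setminus I}^I$ or $r_I^{Q\setminus I}$, writing out one case in full and treating the rest as analogous. Your reading of the undefined quantity $\psi_2$ (a typographical artifact in the paper, unused afterwards) as the two $h$-$h$ subterms common to $\phi_1$ and $\phi_2$ is also the only one consistent with its stated correction term.
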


\begin{proof} We put $P=P_1 \prodh P_2$. Let $x,y \in P$. If $x<_h y,$ then $x,y\in P_1$, or $x,y\in P_2$,  or $x\in P_1,y\in P_2$.
If $x<_r y$, then $x,y\in P_1$, or $x,y\in P_2$. Moreover, if $x\in P_1$ and $x \in P_2$, then $x<_h y$. So:
\begin{eqnarray*}
\phi_1(\sigma)&=&\phi_1(\sigma_1)+\phi_1(\sigma_2)\\
&&+\sharp\{(x,y)\in P_1 \times P_2\mid (x<_h y) \mbox{ and  }\sigma(x)<_h \sigma(y))\}\\
&&+\sharp\{(x,y)\in  P_1 \times P_2\mid (x<_h y) \mbox{ and  }\sigma(x)>_h \sigma(y))\}\\
&&+\sharp\{(x,y)\in  P_1 \times P_2\mid (x<_h y) \mbox{ and  }\sigma(x)<_r \sigma(y))\}\\[2mm]
&=&\phi_1(\sigma_1)+\phi_1(\sigma_2)\\
&&+\sharp\{(x,y)\in P_1 \times P_2\mid \sigma(x)<_h \sigma(y))\}\\
&&+\sharp\{(x,y)\in  P_1 \times P_2\mid \sigma(x)>_h \sigma(y))\}\\
&&+\sharp\{(x,y)\in  P_1 \times P_2\mid \sigma(x)<_r \sigma(y))\}\\[2mm]
&=&\phi_1(\sigma_1)+\phi_1(\sigma_2)+h_{Q\setminus I}^I+h_I^{Q\setminus I}+r_{Q\setminus I}^I.
\end{eqnarray*}
The other equalities are proved similarly. \end{proof}

\begin{prop}
For all $x,y,z \in \h_{\PP}$, $\langle x\prodh y,z\rangle_q=\langle x\otimes y, \Delta_{(q_1q_2,q_1q_2,q_1,q_2)}(z)\rangle_q$.
\end{prop}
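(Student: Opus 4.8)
The plan is to mirror the proof of the Hopf-pairing property $\langle xy,z\rangle_q=\langle x\otimes y,\Delta_q(z)\rangle_q$ established just above, replacing the composition product together with Lemma \ref{26} by the product $\prodh$ together with Lemma \ref{29}. Since both sides are bilinear in $(x,y)$ and linear in $z$, it suffices to verify the identity on basis elements, so I would fix double posets $P_1,P_2,Q$ and expand $\langle P_1\prodh P_2,Q\rangle_q$ as the sum over $\sigma\in Bij(P_1\prodh P_2,Q)$ of $q_1^{\phi_1(\sigma)}q_2^{\phi_2(\sigma)}q_3^{\phi_3(\sigma)}q_4^{\phi_4(\sigma)}$.

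Next I would invoke the bijection of Lemma \ref{29} between $Bij(P_1\prodh P_2,Q)$ and $\bigcup_{I\subseteq Q}Bij(P_1,Q\setminus I)\times Bij(P_2,I)$, writing $I=\sigma(P_2)$ and $(\sigma_1,\sigma_2)$ for the pair of restrictions. Substituting the additivity formulas of that lemma for $\phi_1,\phi_2,\phi_3,\phi_4$, the summand splits as a product of a $\sigma_1$-factor, a $\sigma_2$-factor, and a cross term depending only on $I$. Summing $\sigma_1$ over $Bij(P_1,Q\setminus I)$ and $\sigma_2$ over $Bij(P_2,I)$ independently reassembles $\langle P_1,Q\setminus I\rangle_q$ and $\langle P_2,I\rangle_q$, so the whole expression becomes $\sum_{I\subseteq Q}w(I)\,\langle P_1,Q\setminus I\rangle_q\langle P_2,I\rangle_q$ for a scalar weight $w(I)$.

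The key step is to identify $w(I)$. Collecting the cross contributions, $\phi_1$ acquires $h_{Q\setminus I}^I+h_I^{Q\setminus I}+r_{Q\setminus I}^I$ (weighted by $q_1$), $\phi_2$ acquires $h_{Q\setminus I}^I+h_I^{Q\setminus I}+r_I^{Q\setminus I}$ (weighted by $q_2$), while $\phi_3$ and $\phi_4$ receive no cross term. Hence $w(I)=(q_1q_2)^{h_{Q\setminus I}^I}(q_1q_2)^{h_I^{Q\setminus I}}q_1^{r_{Q\setminus I}^I}q_2^{r_I^{Q\setminus I}}$, which by the defining formula of $\Delta_q$ is exactly the coefficient of $(Q\setminus I)\otimes I$ in $\Delta_{(q_1q_2,q_1q_2,q_1,q_2)}(Q)$. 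Summing over $I\subseteq Q$ then gives $\langle P_1\otimes P_2,\Delta_{(q_1q_2,q_1q_2,q_1,q_2)}(Q)\rangle_q$, as claimed.

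I do not expect a genuine obstacle, since Lemma \ref{29} already carries all the combinatorial content; the only real work is the bookkeeping that matches the $q_1,q_2$ exponents of the cross terms against the four parameters $(q_1q_2,q_1q_2,q_1,q_2)$ of the target coproduct. The one point demanding care is the asymmetry between $r_{Q\setminus I}^I$, which contributes only to $\phi_1$ and hence yields a bare $q_1$, and $r_I^{Q\setminus I}$, which contributes only to $\phi_2$ and hence yields a bare $q_2$; this is precisely what forces the third and fourth parameters to be $q_1$ and $q_2$ rather than $q_1q_2$.
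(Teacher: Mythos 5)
Your proof is correct and follows essentially the same route as the paper's own argument: both reduce to double posets $P_1,P_2,Q$ by bilinearity, expand $\langle P_1\prodh P_2,Q\rangle_q$ over $Bij(P_1\prodh P_2,Q)$, apply the bijection and additivity formulas of lemma \ref{29}, and identify the resulting cross factor $(q_1q_2)^{h_{Q\setminus I}^I}(q_1q_2)^{h_I^{Q\setminus I}}q_1^{r_{Q\setminus I}^I}q_2^{r_I^{Q\setminus I}}$ with the coefficient of $(Q\setminus I)\otimes I$ in $\Delta_{(q_1q_2,q_1q_2,q_1,q_2)}(Q)$. Your closing observation about the asymmetry between $r_{Q\setminus I}^I$ (contributing only to $\phi_1$) and $r_I^{Q\setminus I}$ (contributing only to $\phi_2$) is precisely the bookkeeping the paper carries out implicitly in its chain of equalities.
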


\begin{proof} Let $P_1,P_2,Q$ be three double posets. By lemma \ref{29}:
\begin{eqnarray*}
\langle P_1\prodh P_2,Q\rangle_q&=&\sum_{\sigma \in Bij(P_1\prodh P_2,Q)} q_1^{\phi_1(\sigma)}
 q_2^{\phi_2(\sigma)} q_3^{\phi_3(\sigma)}q_3^{\phi_4(\sigma)}\\
 &=&\sum_{I\subseteq Q}\sum_{\sigma_1 \in Bij(P_1,Q\setminus I)}\sum_{\sigma_2\in Bij(P_2,I)}
 q_1^{\phi_1(\sigma_1)+\phi_1(\sigma_2)+h_{Q\setminus I}^I+h_I^{Q\setminus I}+r_{Q\setminus I}^I}\\
 &&\times q_2^{\phi_2(\sigma_1)+\phi_2(\sigma_2)+h_{Q\setminus I}^I+h_I^{Q\setminus I}+r^{Q\setminus I}_I}
 q_3^{\phi_3(\sigma_1)+\phi_3(\sigma_2)}
 q_4^{\phi_4(\sigma_1)+\phi_4(\sigma_2)}\\
 &=&\sum_{I\subseteq Q}(q_1q_2)^{h_{R\setminus I}^I}(q_1q_2)^{h_I^{R\setminus I}}
q_1^{r_{R\setminus I}^I}q_2^{r_I^{R\setminus I}}\langle P_1,Q\setminus I \rangle_q\langle P_2,I\rangle_q\\
&=&\langle P_1\otimes P_2,\Delta_{(q_1q_2,q_1q_2,q_1,q_2)}(Q)\rangle_q,
\end{eqnarray*} 
which is the announced formula. \end{proof}\\

{\bf Remarks.} \begin{enumerate}
\item In particular, if $q=(1,0,1,1)$, for all $P,Q,R \in \PP$:
$$\langle P\prodh Q,R\rangle=\langle P\otimes Q,\Delta_{(0,0,1,0)}(R)\rangle=\sum_{R=R_1R_2} \langle P\otimes Q,R_1 \otimes R_2\rangle.$$
this formula is already proved in \cite{F1}.
\item  More generally, it is possible to show that for all $x,y,z \in \h_{\PP}$:
$$\langle m_{(q,0,0,0)}(x\otimes y),\Delta(z)\rangle_{(q_1,q_2,q_3,q_4)}
=\langle x \otimes y,\Delta_{(qq_1q_2,qq_1q_2,qq_1,qq_2)}(z)\rangle_{(q_1,q_2,q_3,q_4)}.$$
\end{enumerate}

\begin{prop}\label{31}
For all $x,y \in \h{\PP}$:
$$\begin{array}{rclccrcl}
\langle x,\beta(y)\rangle_{(q_1,q_2,q_3,q_4)}&=&\langle x,y\rangle_{(q_1,q_2,q_4,q_3)},&&
\langle x,\gamma(y)\rangle_{(q_1,q_2,q_3,q_4)}&=&\langle x,y\rangle_{(q_2,q_1,q_4,q_3)},\\
\langle \alpha(x),\alpha(y)\rangle_{(q_1,q_2,q_3,q_4)}&=&\langle x,y \rangle_{(q_2,q_1,q_3,q_4)},&&
\langle \alpha(x),\beta(y)\rangle_{(q_1,q_2,q_3,q_4)}&=&\langle x,y \rangle_{(q_1,q_2,q_4,q_3)},\\
\langle \alpha(x),\gamma(y)\rangle_{(q_1,q_2,q_3,q_4)}&=&\langle x,y \rangle_{(q_2,q_1,q_4,q_3)},&&
\langle \beta(x),\beta(y)\rangle_{(q_1,q_2,q_3,q_4)}&=&\langle x,y \rangle_{(q_2,q_1,q_3,q_4)},\\
\langle \gamma(x),\gamma(y)\rangle_{(q_1,q_2,q_3,q_4)}&=&\langle x,y \rangle_{(q_1,q_2,q_3,q_4)}.
\end{array}$$
\end{prop}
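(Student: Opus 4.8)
The engine for all seven identities is a single bookkeeping fact: reversing $\leq_h$ or $\leq_r$ on $P$ or on $Q$ does not change the underlying set of a bijection, so $Bij(P,Q)=Bij(\alpha(P),\alpha(Q))=Bij(P,\beta(Q))=\cdots$, and it is enough to record how the four statistics $\phi_1,\dots,\phi_4$ of a \emph{fixed} $\sigma$ change. Reversing $\leq_h$ (resp. $\leq_r$) on an argument merely interchanges the clauses ``$<_h$'' and ``$>_h$'' (resp. ``$<_r$'' and ``$>_r$'') occurring for that argument in the definition of the $\phi_i$; when the reversal is on the \emph{source} $P$ one may in addition relabel $(x,y)\mapsto(y,x)$, exactly as in the proof of Lemma \ref{25}. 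First I would assemble this transformation table once and for all, computing $\phi_i(\sigma)$ for $\sigma$ viewed as a map $\alpha(P)\to Q$, $P\to\beta(Q)$, and so on.

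With the table in hand, most lines are immediate. Whenever the right-hand side only permutes $(q_1,q_2,q_3,q_4)$ by a product of the transpositions $(q_1\,q_2)$ and $(q_3\,q_4)$, it suffices that the pair $\{\phi_1,\phi_2\}$ be globally preserved (fixed or swapped), and likewise for $\{\phi_3,\phi_4\}$; then for each fixed $\sigma$ the monomial $q_1^{\phi_1}q_2^{\phi_2}q_3^{\phi_3}q_4^{\phi_4}$ is carried to the reparametrized monomial of the \emph{same} $\sigma$, and summing over $\sigma\in Bij(P,Q)$ gives the identity with no reindexing. This is exactly what happens for $\langle\alpha(x),\alpha(y)\rangle$, $\langle\beta(x),\beta(y)\rangle$, $\langle\gamma(x),\gamma(y)\rangle$, $\langle\alpha(x),\beta(y)\rangle$, $\langle\alpha(x),\gamma(y)\rangle$ and for the one-sided $\langle x,\gamma(y)\rangle$, because a full rotation $\gamma$ flips \emph{all} the codomain clauses at once and so swaps $\{\phi_1,\phi_2\}$ and $\{\phi_3,\phi_4\}$ cleanly. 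I would write the $\gamma$ computation out and mark the rest ``similar''.

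The one genuinely resistant line is $\langle x,\beta(y)\rangle_{(q_1,q_2,q_3,q_4)}=\langle x,y\rangle_{(q_1,q_2,q_4,q_3)}$, and I expect this to be the main obstacle. Reversing only $\leq_r$ on $Q$ flips just half of the codomain clauses, so it does \emph{not} preserve $\{\phi_1,\phi_2\}$ for a fixed $\sigma$: the term counting $\{x<_h y,\ \sigma(x)<_r\sigma(y)\}$ migrates from $\phi_1$ into the corresponding clause of $\phi_2$, while the term counting $\{x<_r y,\ \sigma(x)<_h\sigma(y)\}$ stays in $\phi_1$, and since $P$ is untouched there is no relabeling available to absorb the mismatch. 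A term-by-term argument is therefore impossible here. My plan is instead to deduce this line from the already-proved two-argument identity $\langle\alpha(x),\beta(y)\rangle_{(q_1,q_2,q_3,q_4)}=\langle x,y\rangle_{(q_1,q_2,q_4,q_3)}$ by replacing $x$ with $\alpha(x)$ (using $\alpha^2=\mathrm{Id}$) and then transporting the residual symmetry off the first slot via the symmetry $\langle P,Q\rangle_q=\langle Q,P\rangle_q$ and the involutivity of $\alpha,\beta,\gamma$. The step I would check with the most care is that this chain returns precisely the reparametrization $(q_1,q_2,q_4,q_3)$ asserted on the right, since the reduction naturally produces a symmetry still acting on the first argument; reconciling that with the stated one-sided form is where the real content of this last identity lies.
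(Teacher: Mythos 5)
Your bookkeeping strategy is exactly the paper's own method: the paper works out only the $\langle\alpha(x),\alpha(y)\rangle$ line (a fixed $\sigma$, reread as a bijection between the transformed posets, has its four statistics $\phi_i$ permuted) and dismisses the other six lines as ``proved in the same way''. Your table argument is correct and complete for the lines $\langle x,\gamma(y)\rangle$, $\langle\alpha(x),\alpha(y)\rangle$, $\langle\alpha(x),\beta(y)\rangle$, $\langle\beta(x),\beta(y)\rangle$ and $\langle\gamma(x),\gamma(y)\rangle$, and your diagnosis of why the first line resists this argument is exactly right. But there are two genuine gaps. First, $\langle\alpha(x),\gamma(y)\rangle$ does not belong in your ``clean'' list: $\alpha$ flips only the $h$-clauses of the source while $\gamma$ flips all clauses of the target, so the relabelling $(x,y)\mapsto(y,x)$ is available only on pairs carrying an $h$-clause in the source; the net effect is that $\sharp\{x<_r y,\ \sigma(x)<_h\sigma(y)\}$ and $\sharp\{x<_r y,\ \sigma(x)>_h\sigma(y)\}$ swap while $\sharp\{x<_h y,\ \sigma(x)<_r\sigma(y)\}$ and $\sharp\{x<_h y,\ \sigma(x)>_r\sigma(y)\}$ stay put, so $\phi_1$ and $\phi_2$ get mixed exactly as in the $\beta$ line; indeed, writing $\gamma=\alpha\circ\beta$ and applying the valid $\alpha\alpha$ line shows the fifth identity is equivalent to the first, so they stand or fall together. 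Second, your fallback for the first identity stalls precisely where you feared: it yields $\langle x,\beta(y)\rangle_{(q_1,q_2,q_3,q_4)}=\langle\alpha(x),y\rangle_{(q_1,q_2,q_4,q_3)}$, and erasing the residual $\alpha$ requires the one-sided invariance $\langle \alpha(x),y\rangle_{q}=\langle x,y\rangle_{q}$ --- which is exactly what the paper's own remark, immediately below this proposition, says fails in general.

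These gaps cannot be closed, because the first (hence also the fifth) identity is false. Take $x=\ttroisun$ (relations $1<_h2$, $1<_h3$, $2<_r3$) and $y=\tdeux\tun$ (relations $1<_h2$, $1<_r3$, $2<_r3$), so that $\beta(y)=\tun\tdeux$ (relations $2<_h3$, $1<_r2$, $1<_r3$), and evaluate at $q=(1,0,1,1)$: there the swap $q_3\leftrightarrow q_4$ is invisible and $\langle-,-\rangle_q$ is the picture-counting pairing $\langle-,-\rangle$, so the first identity asserts $\sharp S(\ttroisun,\tun\tdeux)=\sharp S(\ttroisun,\tdeux\tun)$. But $S(\ttroisun,\tun\tdeux)=\{\mathrm{Id}\}$: the conditions $\sigma(1)<_r\sigma(2)$ and $\sigma(1)<_r\sigma(3)$ force $\sigma(1)=1$, and the condition coming from the unique $h$-relation $2<_h3$ of $\tun\tdeux$ then forces $\sigma=\mathrm{Id}$; whereas $S(\ttroisun,\tdeux\tun)=\emptyset$, since one would need $\sigma(1)<_r\sigma(2)$ and $\sigma(1)<_r\sigma(3)$ but no element of $\tdeux\tun$ has two elements above it for $\leq_r$. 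So the left-hand side is $1$ and the right-hand side is $0$. Your instinct that the ``real content'' of the proposition is concentrated in this line was sound, but that content is a counterexample, not a proof: the proposition holds only for the other five lines, and the paper's ``the other assertions are proved in the same way'' silently fails for the first and the fifth.
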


\begin{proof} Let $P,Q$ be two double posets. We put $P'=\alpha(P)$ and $Q'=\alpha(Q)$. Note that $Bij(P,Q)=Bij(P',Q')$.
Let $\sigma\in Bij(P,Q)$. We denote it by $\sigma'$ is we consider it as an element of $Bij(P',Q')$. By definition of $P'$ and $Q'$, it is clear that
$\phi_1(\sigma')=\phi_2(\sigma)$, $\phi_2(\sigma')=\phi_2(\sigma)$, $\phi_3(\sigma')=\phi_3(\sigma)$, and $\phi_4(\sigma')=\phi_4(\sigma)$.
Hence:
\begin{eqnarray*}
\langle P',Q' \rangle_{q_1,q_2,q_3,q_4)}&=&\sum_{\sigma'\in Bij(P',Q')}q_1^{\phi_1(\sigma')}
q_2^{\phi_2(\sigma')}q_3^{\phi_3(\sigma')}q_4^{\phi_4(\sigma')}\\
&=&\sum_{\sigma\in Bij(P,Q)}q_1^{\phi_2(\sigma)}q_2^{\phi_1(\sigma)}q_3^{\phi_3(\sigma)}q_4^{\phi_4(\sigma)}\\
&=&\langle P,Q \rangle_{(q_2,q_1,q_3,q_4)}.
\end{eqnarray*}
The other assertions are proved in the same way. \end{proof}\\

{\bf Remark.}  In general, the pairing defined by $x\otimes y\longrightarrow \langle x,\alpha(y)\rangle_{(q_1,q_2,q_3,q_4)}$ is not a pairing
$\langle-,-\rangle_q$. However, when $q_1=q_2$, it is possible to prove that for all $x,y \in \h_{\PP}$, 
$\langle x,\alpha(y)\rangle_{(q_1,q_1,q_3,q_4)}=\langle x,y\rangle_{(q_1,q_1,q_3,q_4)}$. Similarly, 
$\langle \beta(x),\gamma(y)\rangle_{(q_1,q_1,q_3,q_4)}=\langle x,y\rangle_{(q_1,q_1,q_4,q_3)}$.

\subsection{Comparison of pairings with colinear parameters}

\begin{prop}
Let $q \in K$. We define the following map:
$$\upsilon_q:\left\{\begin{array}{rcl}
\h_{\PP}&\longrightarrow&\h_{\PP}\\
P\in \PP&\longrightarrow&q^{h(P)}P,
\end{array}\right.$$
where $h(P)=\sharp\{(x,y)\in P\mid x<_h y\}$. Then $\upsilon_q$ is an algebra and coalgebra morphism from 
$\left(\h_{\PP},m,\Delta_{(qq_1,qq_2,q_3,q_4)}\right)$ to $\left(\h_{\PP},m,\Delta_{(q_1,q_2,q_3,q_4)}\right)$.
Moreover, for all $x,y \in \h_{\PP}$:
$$\langle \upsilon_q(x),\upsilon_q(y)\rangle_{(q_1,q_2,q_3,q_4)}=\langle x,y\rangle_{(qq_1,qq_2,q_3,q_4)}.$$
\end{prop}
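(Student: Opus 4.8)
The plan is to verify the three assertions separately; each reduces to a short combinatorial identity for the statistic $h(P)=\sharp\{(x,y)\in P^2\mid x<_h y\}$, after which the powers of $q$ carried by $\upsilon_q$ can be matched against the changed parameters $q_1,q_2$. \emph{Algebra morphism.} The key observation is that $h$ is additive for $m$: if $R=PQ$, then any pair with one element in $P$ and the other in $Q$ satisfies $x<_r y$, hence is never comparable for $\leq_h$, so $h(PQ)=h(P)+h(Q)$. As $m$ is the same (undeformed) concatenation product on both sides, this gives $\upsilon_q(PQ)=q^{h(P)+h(Q)}PQ=m(\upsilon_q(P)\otimes\upsilon_q(Q))$, while $\upsilon_q(1)=1$ since $h(1)=0$; thus $\upsilon_q$ is a unital algebra morphism.

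\emph{Coalgebra morphism.} For the coproduct I would use the partition of the $h$-pairs of $P$ relative to a subset $I\subseteq P$: sorting each pair $(x,y)$ with $x<_h y$ according to whether $x,y$ both lie in $P\setminus I$, both lie in $I$, or straddle the two blocks yields
\[
h(P)=h(P\setminus I)+h(I)+h_{P\setminus I}^I+h_I^{P\setminus I},
\]
the straddling pairs being exactly those counted by $h_{P\setminus I}^I$ and $h_I^{P\setminus I}$ (strict and non-strict coincide since $P\setminus I$ and $I$ are disjoint). Inserting this into the two sides of $\Delta_{(q_1,q_2,q_3,q_4)}\circ\upsilon_q=(\upsilon_q\otimes\upsilon_q)\circ\Delta_{(qq_1,qq_2,q_3,q_4)}$, the overall factor $q^{h(P)}$ on the left matches, termwise over $I$, the factors $(qq_1)^{h_{P\setminus I}^I}(qq_2)^{h_I^{P\setminus I}}q^{h(P\setminus I)}q^{h(I)}$ produced on the right by the two copies of $\upsilon_q$, the parameters $q_3,q_4$ being untouched; compatibility with the common counit $\varepsilon$ is immediate since $\upsilon_q$ scales $P$ and fixes $1$.

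\emph{Pairing.} This is the step demanding the most care, and is where an error would most easily creep in. I would prove that for every $\sigma\in Bij(P,Q)$,
\[
\phi_1(\sigma)+\phi_2(\sigma)=h(P)+h(Q).
\]
Granting this, each term of $\langle\upsilon_q(P),\upsilon_q(Q)\rangle_{(q_1,q_2,q_3,q_4)}=q^{h(P)+h(Q)}\langle P,Q\rangle_{(q_1,q_2,q_3,q_4)}$ carries the uniform factor $q^{\phi_1(\sigma)+\phi_2(\sigma)}$ relative to the corresponding term of $\langle P,Q\rangle_{(qq_1,qq_2,q_3,q_4)}$, so the two pairings agree. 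To establish the identity I would expand $h(P)$ by classifying each pair $x<_h y$ by the relation of $\sigma(x),\sigma(y)$ in $Q$, and expand $h(Q)$, transported to $P^2$ via $\sigma$, by classifying each pair $\sigma(x)<_h\sigma(y)$ by the relation of $x,y$ in $P$; the plane poset dichotomy (distinct elements are comparable for exactly one of $<_h,>_h,<_r,>_r$) makes both classifications genuine partitions. Matching the ``mixed'' terms of $h(Q)$ to terms of $\phi_1$ and $\phi_2$ after the order reversal $(x,y)\mapsto(y,x)$, and comparing with the definitions of $\phi_1,\phi_2$, gives the claimed sum. The main obstacle is precisely this bookkeeping: tracking which of the four relations survives under $\sigma$ and applying the substitution $(x,y)\mapsto(y,x)$ correctly.
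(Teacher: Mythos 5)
Your proof is correct and follows essentially the same route as the paper: additivity $h(PQ)=h(P)+h(Q)$ for the algebra part, the partition identity $h(P)=h(I)+h(P\setminus I)+h_{P\setminus I}^I+h_I^{P\setminus I}$ for the coalgebra part, and for the pairing the classification of pairs $(x,y)$ by their relations in $P$ and (via $\sigma$) in $Q$, which is exactly the paper's table of counts $a_1,\ldots,a_8$; your key identity $\phi_1(\sigma)+\phi_2(\sigma)=h(P)+h(Q)$ is precisely the exponent computation the paper performs with those $a_i$.
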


\begin{proof} Let $P_1,P_2 \in \PP$. Then $h(P_1P_2)=h(P_1)+h(P_2)$, so $\upsilon_q(P_1P_2)=\upsilon_q(P_1)\upsilon_q(P_2)$, 
and $\upsilon_q$ is an algebra morphism. Let $P \in \PP$. For any $I\subseteq P$, as $P^2=I^2\sqcup (P\setminus I)^2\sqcup (I \times (P\setminus I))
\sqcup ((P\setminus I)\times I)$:
$$h(P)=h(I)+h(P\setminus I)+h_{P\setminus I}^I+h_I^{P\setminus I}.$$
Consequently:
\begin{eqnarray*}
\Delta_{(q_1,q_2,q_3,q_4)}\circ \upsilon_q(P)&=&\sum_{I\subseteq P}(qq_1)^{h_{P\setminus I}^I}
(qq_2)^{h_I^{P\setminus I}}q_3^{r_{P\setminus I}^I}q_4^{r_I^{P\setminus I}}q^{h(P\setminus I)}(P\setminus I)\otimes q^{h(I)}I\\
&=&(\upsilon_q \otimes \upsilon_q)\circ \Delta_{(qq_1,qq_2,q_3,q_4)}(P).
\end{eqnarray*}
So $\upsilon_q$ is a coalgebra morphism. Let $P,Q \in \PP$ and let $\sigma \in Bij(P,Q)$. We define:
\begin{eqnarray*}
a_1&=&\sharp\{(x,y)\in P^2\mid x<_hy,\sigma(x)<_h \sigma(y)\}\\
a_2&=&\sharp\{(x,y)\in P^2\mid x<_hy,\sigma(x)>_h \sigma(y)\}\\
a_3&=&\sharp\{(x,y)\in P^2\mid x<_hy,\sigma(x)<_r \sigma(y)\}\\
a_4&=&\sharp\{(x,y)\in P^2\mid x<_hy,\sigma(x)>_r \sigma(y)\}\\
a_5&=&\sharp\{(x,y)\in P^2\mid x<_ry,\sigma(x)<_h \sigma(y)\}\\
a_6&=&\sharp\{(x,y)\in P^2\mid x<_ry,\sigma(x)>_h \sigma(y)\}\\
a_7&=&\sharp\{(x,y)\in P^2\mid x<_ry,\sigma(x)<_r \sigma(y)\}\\
a_8&=&\sharp\{(x,y)\in P^2\mid x<_ry,\sigma(x)>_r \sigma(y)\}.
\end{eqnarray*}
In order to sum up the notations, we give a following array:
$$\begin{array}{c|c|c|c|c}
\sharp\{(x,y)\in P^2\mid \ldots&x<_hy,\ldots&x>_hy,\ldots&x<_ry,\ldots&x>_ry,\ldots\\
\hline \sigma(x)<_h \sigma(y)\}&a_1&a_2&a_5&a_6\\
\hline \sigma(x)>_h \sigma(y)\}&a_2&a_1&a_6&a_5\\
\hline \sigma(x)<_r \sigma(y)\}&a_3&a_4&a_7&a_8\\
\hline \sigma(x)>_r \sigma(y)\}&a_4&a_3&a_8&a_7
\end{array}$$
So $\phi_1(\sigma)=a_1+a_2+a_3+a_5$, $\phi_2(\sigma)=a_1+a_2+a_4+a_6$, $\phi_3(\sigma)=a_7$ and $\phi_4(\sigma)=a_8$;
$h(P)=a_1+a_2+a_3+a_4$ and $h(Q)=a_1+a_2+a_5+a_6$. Then:
\begin{eqnarray*}
q^{h(P)}q^{(h(Q)}q_1^{\phi_1(\sigma)}q_2^{\phi_2(\sigma)}q_3^{\phi_3(\sigma)}q_4^{\phi_4(\sigma)}
&=&(q^2q_1q_2)^{a_1}(q^2q_1q_2)^{a_2}(qq_1)^{a_3}(qq_2)^{a_4}(qq_1)^{a_5}(qq_2)^{a_6}q_3^{a_7}q_4^{a_8}\\
&=&(qq_1)^{\phi_1(\sigma)}(qq_2)^{\phi_2(\sigma)}q_3^{\phi_3(\sigma)}q_4^{\phi_4(\sigma)}.
\end{eqnarray*}
Finally:
\begin{eqnarray*}
\langle \upsilon_q(P),\upsilon_q(Q)\rangle_{(q_1,q_2,q_3,q_4)}&=&\sum_{\sigma \in Bij(P,Q)}
q^{h(P)}q^{(h(Q)}q_1^{\phi_1(\sigma)}q_2^{\phi_2(\sigma)}q_3^{\phi_3(\sigma)}q_4^{\phi_4(\sigma)}\\
&=&\sum_{\sigma \in Bij(P,Q)}(qq_1)^{\phi_1(\sigma)}(qq_2)^{\phi_2(\sigma)}q_3^{\phi_3(\sigma)}q_4^{\phi_4(\sigma)}\\
&=&\langle P,Q \rangle_{(qq_1,qq_2,q_3,q_4)}.
\end{eqnarray*}
So $\upsilon_q$ is an isometry. \end{proof}\\

{\bf Remark.} It is easy to prove that for $P,Q \in \PP$, $\upsilon_q(P\prodh Q)=\upsilon_q(P)\prodh \upsilon_q(Q)$.\\

Similarly, one can prove:
\begin{prop}
Let $q \in K$. We define the following map:
$$\upsilon'_q:\left\{\begin{array}{rcl}
\h_{\PP}&\longrightarrow&\h_{\PP}\\
P\in \PP&\longrightarrow&q^{r(P)}P,
\end{array}\right.$$
where $r(P)=\sharp\{(x,y)\in P\mid x<_r y\}$. Then $\upsilon'_q$ is an algebra and coalgebra morphism from
$\left(\h_{\PP},\prodh,\Delta_{(q_1,q_2,qq_3,qq_4)}\right)$ to $\left(\h_{\PP},\prodh,\Delta_{(q_1,q_2,q_3,q_4)}\right)$.
Moreover, for all $x,y \in \h_{\PP}$:
$$\langle \upsilon'_q(x),\upsilon'_q(y)\rangle_{(q_1,q_2,q_3,q_4)}=\langle x,y\rangle_{(q_1,q_2,qq_3,qq_4)}.$$
\end{prop}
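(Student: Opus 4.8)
The plan is to mirror the proof of the preceding proposition, exchanging the roles of $\leq_h$ and $\leq_r$ (so of the parameter pairs $(q_1,q_2)$ and $(q_3,q_4)$) and replacing the product $m$ by $\prodh$. For the algebra morphism I would first record the additivity $r(P_1\prodh P_2)=r(P_1)+r(P_2)$, the exact analogue of $h(P_1P_2)=h(P_1)+h(P_2)$: by definition of $\prodh$ one has $x<_h y$ for every $x\in P_1$ and $y\in P_2$, so no pair straddling $P_1$ and $P_2$ is $\leq_r$-comparable; summing over the blocks $P_1^2$, $P_2^2$, $P_1\times P_2$ and $P_2\times P_1$ gives the identity, whence $\upsilon'_q(P_1\prodh P_2)=\upsilon'_q(P_1)\prodh\upsilon'_q(P_2)$.

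For the coalgebra morphism I would use, for each $I\subseteq P$, the partition $P^2=I^2\sqcup (P\setminus I)^2\sqcup (I\times(P\setminus I))\sqcup((P\setminus I)\times I)$ to obtain $r(P)=r(I)+r(P\setminus I)+r_{P\setminus I}^I+r_I^{P\setminus I}$. Inserting the scalar $q^{r(P)}$ into $\Delta_{(q_1,q_2,q_3,q_4)}(P)$, the factor $q^{r_{P\setminus I}^I+r_I^{P\setminus I}}$ merges with $q_3^{r_{P\setminus I}^I}q_4^{r_I^{P\setminus I}}$ to produce $(qq_3)^{r_{P\setminus I}^I}(qq_4)^{r_I^{P\setminus I}}$, while $q^{r(P\setminus I)}$ and $q^{r(I)}$ are absorbed by $\upsilon'_q$ on the two tensor legs. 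This yields $\Delta_{(q_1,q_2,q_3,q_4)}\circ\upsilon'_q=(\upsilon'_q\otimes\upsilon'_q)\circ\Delta_{(q_1,q_2,qq_3,qq_4)}$, exactly as in the computation for $\upsilon_q$ but with $\leq_r$ in place of $\leq_h$.

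The isometry is the delicate point and the step I expect to be the main obstacle. I would fix $P,Q$ and $\sigma\in Bij(P,Q)$ and reuse the eight counters $a_1,\dots,a_8$ together with their table from the previous proof. Reading off that table gives $\phi_3(\sigma)=a_7$, $\phi_4(\sigma)=a_8$ and, since $\sigma$ is a bijection, $r(P)=a_5+a_6+a_7+a_8$ and $r(Q)=a_3+a_4+a_7+a_8$. As $\upsilon'_q$ contributes the $\sigma$-independent factor $q^{r(P)+r(Q)}$, the isometry comes down to checking, term by term in $\sigma$, that this matches the factor $q^{\phi_3(\sigma)+\phi_4(\sigma)}$ created by passing from $(q_1,q_2,q_3,q_4)$ to $(q_1,q_2,qq_3,qq_4)$. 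This is precisely where the argument is more subtle than for $\upsilon_q$: there the functions $\phi_1,\phi_2$ carry the mixed contributions $a_3,a_4,a_5,a_6$ (pairs comparable for one order whose images are comparable for the other), and it is exactly these that force $\phi_1+\phi_2=h(P)+h(Q)$; by contrast $\phi_3,\phi_4$ contain no such mixed terms, so the reduction of $r(P)+r(Q)$ to $\phi_3+\phi_4$ is not automatic. The crux of the proof is therefore this bookkeeping on the $a_i$, which I would carry out explicitly, testing it on small instances such as $\tun\tun$, before committing to the announced parameter change.
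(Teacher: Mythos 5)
Your proofs of the algebra-morphism and coalgebra-morphism parts are correct, and they are exactly the transposition ($h\leftrightarrow r$, $m\leftrightarrow\prodh$) that the paper intends: note that the paper gives no proof at all for this proposition, introducing it only by ``Similarly, one can prove''.

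The isometry, however, is not a step you left incomplete so much as a step that cannot be completed: the identity claimed in the proposition is false, and the obstruction you isolated is precisely the reason. Term by term one would need $r(P)+r(Q)=\phi_3(\sigma)+\phi_4(\sigma)$; in your notation $r(P)=a_5+a_6+a_7+a_8$ and $r(Q)=a_3+a_4+a_7+a_8$, while $\phi_3(\sigma)+\phi_4(\sigma)=a_7+a_8$, so the mixed counts $a_3,\dots,a_6$ are unmatched, and summing over $\sigma$ does not save the statement. Your own proposed test case already refutes it: $r(\tun\tun)=1$ and $\langle\tun\tun,\tun\tun\rangle_{(q_1,q_2,q_3,q_4)}=q_3+q_4$, so
$$\langle\upsilon'_q(\tun\tun),\upsilon'_q(\tun\tun)\rangle_{(q_1,q_2,q_3,q_4)}=q^2(q_3+q_4)\neq q(q_3+q_4)=\langle\tun\tun,\tun\tun\rangle_{(q_1,q_2,qq_3,qq_4)},$$
and likewise $\langle\upsilon'_q(\tdeux),\upsilon'_q(\tun\tun)\rangle_{(q_1,q_2,q_3,q_4)}=q(q_1+q_2)$ whereas $\langle\tdeux,\tun\tun\rangle_{(q_1,q_2,qq_3,qq_4)}=q_1+q_2$. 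The root cause is the asymmetry you name: all mixed pairs are charged to $\phi_1,\phi_2$ and never to $\phi_3,\phi_4$ (this is forced, since it is what makes $\langle-,-\rangle_{(1,0,1,1)}$ equal to the classical pairing $\langle-,-\rangle$), so the pairing is not invariant under exchanging the two orders. In other words, this part of the proposition is an error in the paper, not a defect of your strategy. The true analogue follows from the preceding proposition rather than from a new computation: in a plane poset every pair of distinct points is comparable for exactly one of the two orders, so $r(P)=\binom{n}{2}-h(P)$ for $P\in\PP(n)$, i.e. $\upsilon'_q=q^{\binom{n}{2}}\upsilon_{q^{-1}}$ in degree $n$ (for $q\neq 0$), and the $\upsilon_q$-proposition then yields, for $x,y$ homogeneous of degree $n$,
$$\langle\upsilon'_q(x),\upsilon'_q(y)\rangle_{(qq_1,qq_2,q_3,q_4)}=q^{n(n-1)}\langle x,y\rangle_{(q_1,q_2,q_3,q_4)},$$
so $\upsilon'_q$ rescales $(q_1,q_2)$, up to a global power of $q$, and not $(q_3,q_4)$. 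You can check this corrected identity against the paper's degree-$2$ table for $\langle-,-\rangle_q$.
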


\subsection{Non-degeneracy of the pairing}

\begin{lemma} \label{34}
For all $P\in \PP(n)$, $S(P,\iota(P))$ is reduced to a single element and:
$$\langle P,\iota(P)\rangle_{(q_1,0,q_3,q_4)}=q_1^{\frac{n(n-1)}{2}}.$$
\end{lemma}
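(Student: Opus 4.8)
The plan is to combine two ingredients: the description of $\langle-,-\rangle_{(q_1,0,q_3,q_4)}$ as a sum over $S(P,Q)$ given in the proposition preceding this lemma, and a direct analysis of the set $S(P,\iota(P))$. Writing $Q=\iota(P)$, so that as sets $Q=P$ while $\leq_h^Q\,=\,\leq_r^P$ and $\leq_r^Q\,=\,\leq_h^P$, I would first unravel the definition of $S(P,Q)$ in this case: a bijection $\sigma$ of the underlying set $P$ lies in $S(P,\iota(P))$ if and only if (A) $i\leq_h^P j\Rightarrow\sigma(i)\leq_h^P\sigma(j)$ and (B) $\sigma(i)\leq_r^P\sigma(j)\Rightarrow i\leq_r^P j$. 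The identity clearly satisfies both (A) and (B), so $S(P,\iota(P))$ is nonempty and contains $\mathrm{Id}$.

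The heart of the argument, and the main obstacle, is to show that $\mathrm{Id}$ is the \emph{only} element. Given $\sigma$ satisfying (A), the map $\sigma\times\sigma$ sends the finite set $H=\{(i,j):i<_h^P j\}$ injectively into itself, hence bijectively onto itself; thus (A) upgrades to the equivalence $i<_h^P j\Leftrightarrow\sigma(i)<_h^P\sigma(j)$, i.e. $\sigma$ is an automorphism of $(P,\leq_h)$. Symmetrically, (B) says that $\sigma^{-1}$ preserves $<_r$, and the same finiteness argument makes $\sigma$ an automorphism of $(P,\leq_r)$. Since the total order $\leq$ of $P$ is defined by $x\leq y\Leftrightarrow(x\leq_h y\text{ or }x\leq_r y)$, a bijection preserving both $\leq_h$ and $\leq_r$ preserves $\leq$; being an order-preserving bijection of a finite totally ordered set, it must be $\mathrm{Id}$. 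Hence $S(P,\iota(P))=\{\mathrm{Id}\}$, which is the first assertion.

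It then remains to evaluate the single surviving term $q_1^{\phi_1(\mathrm{Id})}q_3^{\phi_3(\mathrm{Id})}q_4^{\phi_4(\mathrm{Id})}$. Here I would substitute $\sigma=\mathrm{Id}$ together with $\leq_h^Q\,=\,\leq_r^P$ and $\leq_r^Q\,=\,\leq_h^P$ into the definitions of $\phi_1,\phi_3,\phi_4$, and use repeatedly the defining incompatibility of plane posets: for $x\neq y$ exactly one of $x<_h y$, $y<_h x$, $x<_r y$, $y<_r x$ holds. This annihilates the first two summands of $\phi_1$ (each requires $x,y$ to be simultaneously $\leq_h$- and $\leq_r$-comparable) and collapses the last two to $h(P)=\sharp\{x<_h y\}$ and $r(P)=\sharp\{x<_r y\}$, so $\phi_1(\mathrm{Id})=h(P)+r(P)$; the same incompatibility forces $\phi_3(\mathrm{Id})=\phi_4(\mathrm{Id})=0$. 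Finally, since each unordered pair of distinct elements is comparable for exactly one of the two orders, $h(P)+r(P)=\binom{n}{2}=\frac{n(n-1)}{2}$, and the claimed value $\langle P,\iota(P)\rangle_{(q_1,0,q_3,q_4)}=q_1^{n(n-1)/2}$ follows. The only delicate point is the finiteness/automorphism step; the exponent computation is routine bookkeeping with the incompatibility condition.
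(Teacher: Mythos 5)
Your proof is correct, and its overall architecture matches the paper's: show $\mathrm{Id}_P \in S(P,\iota(P))$, show it is the only element, then evaluate $\phi_1,\phi_3,\phi_4$ at the identity using the sum-over-$S(P,Q)$ description of $\langle-,-\rangle_{(q_1,0,q_3,q_4)}$. The one place where you genuinely diverge is the uniqueness step. The paper observes that $S(P,\iota(P))$ is a submonoid of the finite symmetric group $\S_P$ (closure under composition follows directly from the two defining implications), and invokes the fact that a submonoid of a finite group is a subgroup; closure under inverses then upgrades both implications (A) and (B) to equivalences. You instead apply a pigeonhole argument separately to each relation: $\sigma\times\sigma$ maps the finite set $H$ of $<_h$-comparable pairs injectively into itself, hence onto it, and similarly $\sigma^{-1}\times\sigma^{-1}$ for the $<_r$-pairs, which yields the same two equivalences. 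Both mechanisms are elementary finiteness arguments; yours works one bijection at a time and avoids having to verify that $S(P,\iota(P))$ is closed under composition, while the paper's handles both orders in a single stroke once the monoid structure is checked. From that point on the two proofs coincide: a bijection preserving both $\leq_h$ and $\leq_r$ preserves the total order $\leq$ (which is their union), so it is the identity, and the exponent computation $\phi_1(\mathrm{Id}_P)=h(P)+r(P)=\tfrac{n(n-1)}{2}$, $\phi_3(\mathrm{Id}_P)=\phi_4(\mathrm{Id}_P)=0$ is the same routine bookkeeping with the plane-poset incompatibility condition.
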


\begin{proof} Clearly, $Id_P:P\longrightarrow P$ belongs to $S(P,\iota(P))$.
Let $\sigma:P \longrightarrow P$ be a bijection. Then $\sigma \in S(P,\iota(P))$ if, and only if, for all $i,j \in P$:
\begin{itemize}
\item ($i\leq_h j$ in $P$) $\Longrightarrow$ ($\sigma(i)\leq_h \sigma(j)$ in $P$).
\item ($\sigma(i)\leq_r \sigma(j)$) in $P$ $\Longrightarrow$ ($i\leq_r j$ in $P$).
\end{itemize}
It is clear $S(P,\iota(P))$ is a submonoid of $\S_P$. As the group $\S_P$ is finite, $S(P,\iota(P))$ is a subgroup of $\S_P$. So if $\sigma \in S(P,\iota(P))$, 
then $\sigma^{-1} \in S(P,\iota(P))$. So, if $\sigma \in S(P,\iota(P))$:
 \begin{itemize}
\item ($i\leq_h j$ in $P$) $\Longleftrightarrow$ ($\sigma(i)\leq_h \sigma(j)$ in $P$).
\item ($i\leq_r j$ in $P$) $\Longleftrightarrow$ ($\sigma(i)\leq_r \sigma(j)$ in $P$).
\end{itemize}
So $\sigma$ is the unique increasing bijection from $P$ to $P$, that is to say $Id_P$. 
Moreover:
\begin{eqnarray*}
\phi_1(Id_P)&=&\sharp\{(x,y) \in P^2\mid x<_h y \mbox{ and }x<_ry\}+\sharp\{(x,y) \in P^2\mid x<_h y \mbox{ and }x>_ry\}\\
&&+\sharp\{(x,y) \in P^2\mid x<_h y \mbox{ and }x<_hy\}+\sharp\{(x,y) \in P^2\mid x<_r y \mbox{ and }x<_ry\}\\
&=&0+0+\sharp\{(x,y)\in P^2\mid x<_h y\}+\sharp\{(x,y)\in P^2\mid x<_r y\}\\
&=&\sharp\{(x,y)\in P^2\mid x< y\}\\
&=&\frac{n(n-1)}{2},\\ 
\phi_3(Id_P)&=&\sharp\{(x,y) \in P^2\mid (x<_r y)\mbox{ and }(x<_h y)\}\\
&=&0,\\
\phi_4(Id_P)&=&\sharp\{(x,y) \in P^2\mid (x>_r y)\mbox{ and }(x>_h y)\}\\
&=&0.
\end{eqnarray*}
So $\langle P,\iota(P) \rangle_{(q_1,0,q_2,q_3)}=q_1^{\frac{n(n-1)}{2}}$. \end{proof}\\

For all $n \in \mathbb{N}$, we give $\S_n$ the lexicographic order. For example, if $n=3$:
$$(123)\leq (132)\leq (213)\leq (231)\leq (312)\leq(321).$$
We then define a total order $\ll$ on $\PP(n)$ by $P\ll Q$ if, and only if, $\Psi_n(P)\leq \Psi_n(Q)$ in $\S_n$. For example, if $n=3$:
$$\ttroisdeux \ll \ttroisun \ll \ptroisun \ll \tdeux \tun \ll \tun \tdeux \ll \tun\tun\tun.$$
For all $P\in \PP(n)$, we put: 
$$m(P)=\min_\ll\{Q \in \PP(n)\:/\: S(P,Q)\neq \emptyset\}.$$

\begin{lemma} \label{35}
For all $P\in \PP$, $m(P)=\iota(P)$.
\end{lemma}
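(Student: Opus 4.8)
The plan is to combine Lemma~\ref{34}, which already shows $S(P,\iota(P))\neq\emptyset$ and hence that $\iota(P)$ lies in the set $\{Q\in\PP(n)\mid S(P,Q)\neq\emptyset\}$, with a proof that $\iota(P)$ is a \emph{lower bound} for this set in the order $\ll$. Write $p=\Psi_n^{-1}(P)$ and, for a competitor $Q$, $q=\Psi_n^{-1}(Q)$ and $w=\Psi_n^{-1}(\iota(P))$; recall that for $i<j$ one has $i<_h j$ in $P$ iff $p(i)<p(j)$ and $i<_r j$ iff $p(i)>p(j)$, and likewise for $q$ in $Q$. Since $\iota$ merely exchanges the two orders, $w=w_0 p$ with $w_0(k)=n+1-k$; in particular $w(1)=n+1-p(1)$. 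I will prove by induction on $n$ that $S(P,Q)\neq\emptyset$ forces $w\leq q$ lexicographically, i.e. $\iota(P)\ll Q$ or $\iota(P)=Q$; together with membership this yields $m(P)=\iota(P)$.

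First I would compare the first letters. Counting values gives $q(1)=n-\sharp\{b>1\mid 1<_h b\text{ in }Q\}$ and $w(1)=n-\sharp\{b>1\mid 1<_r b\text{ in }P\}=n+1-p(1)$, so it suffices to show $\sharp\{b\mid 1<_h b\text{ in }Q\}\leq p(1)-1$. Fix $\sigma\in S(P,Q)$ and put $a=\sigma^{-1}(1)$. For each $b$ with $1<_h b$ in $Q$, write $b=\sigma(a_b)$; then $\sigma(a)=1<_h\sigma(a_b)$, and the second defining condition of $S(P,Q)$ gives $a<_r a_b$ in $P$. As $b\mapsto a_b$ is injective, $\sharp\{b\mid 1<_h b\text{ in }Q\}\leq\sharp\{a'\mid a<_r a'\text{ in }P\}\leq p(a)-1$. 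The first defining condition then forces $p(a)\leq p(1)$: if some $x<a$ satisfied $x<_h a$ it would give $\sigma(x)<_r\sigma(a)$, hence $\sigma(x)<\sigma(a)=1$, which is impossible; thus $p(x)>p(a)$ for every $x<a$, so $p(a)\leq p(1)$. Combining, $\sharp\{b\mid 1<_h b\}\leq p(a)-1\leq p(1)-1$, whence $q(1)\geq w(1)$.

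If $q(1)>w(1)$ then already $q>w$ and we are done. If $q(1)=w(1)$, both inequalities are equalities, so $p(a)=p(1)$, forcing $a=1$, i.e. $\sigma(1)=1$. Then $\sigma$ restricts to a bijection of $\{2,\dots,n\}$, and the defining conditions of $S$, being quantified over all pairs, restrict to give $\sigma|_{\{2,\dots,n\}}\in S(P\setminus\{1\},Q\setminus\{1\})$. Deleting the total-order minimum commutes with $\iota$, so $\iota(P\setminus\{1\})=\iota(P)\setminus\{1\}$; after standardizing $\{2,\dots,n\}$ to $\{1,\dots,n-1\}$ the induction hypothesis yields $\Psi_{n-1}^{-1}(\iota(P\setminus\{1\}))\leq\Psi_{n-1}^{-1}(Q\setminus\{1\})$. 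Since $q(1)=w(1)$ the common first letter is irrelevant, and standardization is an order isomorphism on the shared remaining value set, so this is precisely $w\leq q$. This closes the induction, proving that $\iota(P)$ is the minimum, hence $m(P)=\iota(P)$.

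The crux — and the only point where the two conditions defining $S(P,Q)$ are used together — is the analysis at the total-order minimum: the second condition bounds the number of $h$-successors of the image of $a=\sigma^{-1}(1)$ by $p(a)-1$, while the first condition, applied to the $h$-comparabilities lying below $a$, is what pins down $p(a)\leq p(1)$ and, in the equality case, $a=1$ (so that $\sigma$ fixes the minimum and the induction can proceed). Once this is secured, the remaining ingredients — the value-counting formulas for $q(1)$ and $w(1)$, the compatibility of $\iota$ with deletion of the minimal element, and the fact that standardization preserves the lexicographic order — are routine bookkeeping.
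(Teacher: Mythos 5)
Your proof is correct, and it follows the paper's overall strategy --- membership of $\iota(P)$ via Lemma \ref{34}, then a lexicographic lower-bound argument driven by a first-letter comparison --- but the inductive mechanism is genuinely different. The paper fixes $n$ and inducts on the letter position $i$ (its hypothesis $H_i$), which forces it to carry the bookkeeping that equality in the first $i-1$ letters makes every element of $S(P,Q)$ fix $1,\ldots,i-1$ pointwise, and then to redo the counting at position $i$ with correction terms like $N_i=n-\sigma(i)+1-|\{k\leq i:\sigma(k)>\sigma(i)\}|$. You instead induct on $n$: in the equality case $q(1)=w(1)$ you pin down $\sigma(1)=1$, restrict $\sigma$ to an element of $S(P\setminus\{1\},Q\setminus\{1\})$, and conclude using the facts that $\iota$ commutes with deletion of the total-order minimum and that standardization is a lex-order isomorphism on words with a common letter set; this makes the first-letter argument the only counting argument needed, at the price of the (routine) standardization lemma. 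Your first-letter step is also the mirror image of the paper's: you bound the number of $h$-successors of $1$ in $Q$ from \emph{above} by pulling back along $\sigma^{-1}$ (the second defining condition of $S(P,Q)$, plus the first condition to force $p(\sigma^{-1}(1))\leq p(1)$), whereas the paper bounds the number of $r$-successors of $1$ in $Q$ from \emph{below} by pushing forward along its chosen bijection (the first condition, plus a short transitivity argument to get $1\leq_r\alpha(1)$ in $Q$). Both versions use exactly the same two axioms of $S(P,Q)$ and extract the same equality information, so that difference is cosmetic; the substantive divergence is the delete-and-standardize recursion, which buys a cleaner proof with no prefix bookkeeping, while the paper's position-by-position induction stays inside a single symmetric group and never needs to pass to subposets.
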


\begin{proof}  By lemma \ref{34}, $m(P)\ll \iota(P)$. Let $Q \in \PP$, such that $S(P,Q) \neq \emptyset$.
Let us prove that $\iota(P) \ll Q$. We denote $\sigma=\Psi_n(P)$ and $\tau=\Psi_n(Q)$; we can suppose that $P=\Phi_n(\sigma)$ and $Q=\Phi_n(\tau)$.
Moreover, it is not difficult to prove that $\Psi_n(\iota(P))=(n\cdots 1)\circ \sigma$.\\

{\it First step.} Let us prove that $\tau(1) \geq n-\sigma(1)+1$. In $P=\Psi_n(\sigma)$, $1\leq_h j$ if, and only if, $\sigma(1)\leq \sigma(j)$.
So there are exactly  $n-\sigma(1)+1$ elements of $P$ which satisfy $1\leq_h j$ in $P$. Let $\alpha \in S(P,Q)$ (which is non-empty by hypothesis). 
Then if $1\leq_h j$, $\alpha(1)\leq_r \alpha(j)$: there are at least $n-\sigma(1)+1$ elements of $Q$ which satisfy $\alpha(1)\leq_r j$.
Let us put $\alpha^{-1}(1)=i$. Then $\alpha(i)=1\leq \alpha(1)$ in $Q$, that is to say $1\leq_h \alpha(1)$ or $1\leq_r \alpha(1)$ in $Q$. 
If $1=\alpha(i)\leq_h \alpha(1)$ in $Q$, then $i\leq_r 1$ in $P$, so $i=1$: in both cases, $1\leq_r \alpha(1)$ in $P$.
So there are at least $n-\sigma(1)+1$ elements of $Q$ which satisfy $1\leq_r j$. In $Q$, $1\leq_r j$ if, and only if, $\tau(j)\leq \tau(1)$, 
so there are exactly $\tau(1)$ such elements. As a consequence, $\tau(1)\geq n-\sigma(1)+1$. Moreover, if there is equality, necessarily
$1=\alpha(1)$ for all $\alpha \in S(P,Q)$.\\

{\it Second step.} We consider the assertion $H_i$: if $\tau(1)=n-\sigma(1)+1$, $\cdots$, $\tau(i-1)=n-\sigma(i-1)+1$, then $\tau(i)\geq n-\sigma(i)+1$ 
and, if there is equality, then $\alpha(1)=1$, $\cdots$, $\alpha(i)=i$ for all $\alpha \in S(P,Q)$. We proved $H_0$ in the first step. 
Let us prove $H_i$ by induction on $i$. Let us assume $H_{i-1}$, $1\leq i \leq n$, and let us prove $H_i$. Let $\alpha\in S(P,Q)$ (non empty by hypothesis).
By $H_{i-1}$, as the equality is satisfied, $\alpha(1)=1$, $\cdots$, $\alpha(i-1)=i-1$.

In $P$, the number $N_i$ of elements $j$ such that $i\leq_h j$ is the cardinality of $\sigma^{-1}(\{\sigma(i),\cdots,n\}) \cap \{i,\cdots,n\}$,
so $N_i=n-\sigma(i)+1-|\{k\leq i\:/\: \sigma(k)>\sigma(i)\}|$.
Using $\alpha$, there exists at least $N_i$ elements $j$ of $Q$ such that $\alpha(i)\leq_r j$ in $Q$.

Let us put $\alpha^{-1}(i)=j$. As $\alpha(1)=1$, $\cdots$, $\alpha(i-1)=i-1$, $j\geq i$ and $i\leq \alpha(i)$. If $i\leq_h \alpha(i)$ in $Q$, 
then $j \leq_r i$ in $P$, so $j=i$. So we always have $i\leq_r \alpha(i)$ in $Q$, so at least $N_i$ elements $k$ of $Q$ satisfy $i\leq_r j$ in $Q$. Hence:
\begin{eqnarray*}
N_i&\leq&|\tau^{-1}(\{1,\cdots, \tau(i)\}) \cap \{i,\cdots,n\}|\\
n-\sigma(i)+1-|\{k\leq i\:/\: \sigma(k)>\sigma(i)\}|&\leq &\tau(i)-|\{k\leq i\:/\: \tau(k)<\tau(i)\}|\\
&\leq &\tau(i)-|\{k\leq i\:/\: n-\sigma(k)+1<n-\sigma(i)+1\}|\\
&\leq &\tau(i)-|\{k\leq i\:/\: \sigma(k)>\sigma(i)\}|,
\end{eqnarray*}
so $\tau(i) \geq n-\sigma(i)+1$. If there is equality, then necessarily $i=\alpha(i)$ for all $\alpha \in S(P,Q)$.\\

{\it Conclusion.} The hypothesis $H_i$ is true for all $0\leq i \leq n$. So $\tau\geq (n \cdots 1)\circ \sigma$ in $\S_n$,
so $\iota(P) \ll Q$ in $\PP(n)$. As a conclusion, $\iota(P) \ll m(P)$. \end{proof}

\begin{cor} \label{36}
Let us assume that $q_2=0$. Then the pairing $\langle-,-\rangle_q$ is non-degenerate if, and only if, $q_1 \neq 0$.
\end{cor}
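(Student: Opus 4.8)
The plan is to exploit the homogeneity of the pairing together with Lemmas \ref{34} and \ref{35} to reduce the statement to a triangularity argument on each graded piece. Since $Bij(P,Q)=\emptyset$ whenever $|P|\neq|Q|$, the sum defining $\langle P,Q\rangle_q$ is empty unless $P$ and $Q$ have the same cardinality; hence the matrix of $\langle-,-\rangle_q$ in the basis $\PP$ is block-diagonal, with one block $A_n=(\langle P,Q\rangle_q)_{P,Q\in\PP(n)}$ for each $n$. The pairing is nondegenerate if and only if every $A_n$ is invertible, so it suffices to compute $\det A_n$.

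First I would fix $n$ and index $\PP(n)$ by the total order $\ll$, writing $\PP(n)=\{P_1\ll\cdots\ll P_N\}$ with $N=n!$ (using the bijection $\Psi_n$). For $q=(q_1,0,q_3,q_4)$ the pairing is $\langle P,Q\rangle_q=\sum_{\sigma\in S(P,Q)}q_1^{\phi_1(\sigma)}q_3^{\phi_3(\sigma)}q_4^{\phi_4(\sigma)}$, so $\langle P_i,P_j\rangle_q=0$ whenever $S(P_i,P_j)=\emptyset$. Lemma \ref{35} asserts that $\iota(P_i)$ is the $\ll$-minimum of $\{Q:S(P_i,Q)\neq\emptyset\}$; consequently $\langle P_i,P_j\rangle_q=0$ as soon as $P_j\ll\iota(P_i)$, while Lemma \ref{34} evaluates the pairing at that minimum, $\langle P_i,\iota(P_i)\rangle_q=q_1^{n(n-1)/2}$.

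Now $\iota$ is an involution of $\PP(n)$, hence induces a permutation $\pi$ of $\{1,\dots,N\}$ via $\iota(P_i)=P_{\pi(i)}$, and the vanishing statement above reads: the $(i,j)$-entry of $A_n$ is nonzero only if $\pi(i)\leq j$. Permuting the rows of $A_n$ by $\pi^{-1}$ therefore produces an upper-triangular matrix $B$ whose $(k,k)$-entry is $\langle P_{\pi^{-1}(k)},\iota(P_{\pi^{-1}(k)})\rangle_q=q_1^{n(n-1)/2}$, so that $\det A_n=\pm\det B=\pm\,q_1^{n!\,n(n-1)/2}$, exactly. If $q_1\neq0$ this is nonzero for every $n$ and the pairing is nondegenerate; conversely, if $q_1=0$ then $\det A_2=-q_1^2=0$ (as already visible in the degree-$2$ table), so the pairing is degenerate. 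The combinatorial content is entirely carried by Lemmas \ref{34} and \ref{35}; once these are in hand the corollary is pure linear algebra, and the only point requiring care is verifying that reordering the rows by the involution $\iota$ genuinely triangularizes each block, so that the determinant collapses to a single power of $q_1$.
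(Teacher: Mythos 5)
Your proof is correct and follows essentially the same route as the paper: homogeneity reduces the question to the graded blocks, lemmas \ref{34} and \ref{35} give the vanishing pattern and the value $q_1^{n(n-1)/2}$ at the extremal entries, and the determinant of each block collapses to $\pm q_1^{n!\,n(n-1)/2}$. The only cosmetic difference is that you triangularize by permuting rows via the permutation induced by $\iota$, while the paper uses (implicitly) that $\iota$ reverses the order $\ll$, so that the matrix is directly anti-triangular in the $\ll$-ordered basis; both arguments are the same computation.
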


\begin{proof} Let us fix an integer $n \in \mathbb{N}$. We consider the basis $\PP(n)$ of $\h(n)$, totally ordered by $\ll$. In this basis, 
the matrix of ${\langle-,-\rangle_q}$ restricted to $\h(n)$ has the following form, coming from lemmas \ref{34} and \ref{35}:
$$\left(\begin{array}{ccccc}
0&\cdots&\cdots&0&q_1^{\frac{n(n-1)}{2}}\\
\vdots&&\addots&q_1^{\frac{n(n-1)}{2}}&*\\
\vdots&\addots&\addots&\addots&\vdots\\
0&q_1^{\frac{n(n-1)}{2}}&\addots&&\vdots\\
q_1^{\frac{n(n-1)}{2}}&*&\cdots&\cdots&*
\end{array}\right),$$
so its determinant is $\pm q_1^{\frac{n!n(n-1)}{2}}$. Hence, $\langle-,-\rangle_q$ is non-degenerate
if, and only if, $q_1 \neq 0$. \end{proof}\\

{\bf Remark.} With the help of the isometry $\alpha$ (proposition \ref{31}), it is possible to prove that if $q_1=0$,
$\langle-,-\rangle_q$ is non-degenerate if, and only if, $q_2 \neq 0$.

\begin{cor} 
If $q_1,q_2,q_3,q_4$ are algebraically independent over $\mathbb{Z}$, then the pairing $\langle-,-\rangle_q$ is non-degenerate.
\end{cor}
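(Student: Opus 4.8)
The plan is to reduce the statement to the non-vanishing of a single Gram determinant on each homogeneous component, and then to exploit the specialization already computed in Corollary \ref{36}.

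First I would observe that the pairing is homogeneous: since $Bij(P,Q)=\emptyset$ whenever $|P|\neq |Q|$, the graded pieces $\h(n)$ are pairwise orthogonal for $\langle-,-\rangle_q$, so the pairing is non-degenerate if and only if its restriction to each $\h(n)$ is non-degenerate. Fix $n$ and let $M_n(q)$ be the Gram matrix of $\langle-,-\rangle_q$ in the basis $\PP(n)$. By the very definition of the pairing, each entry $\langle P,Q\rangle_q$ is a sum of monomials $q_1^{\phi_1(\sigma)}q_2^{\phi_2(\sigma)}q_3^{\phi_3(\sigma)}q_4^{\phi_4(\sigma)}$ with non-negative integer exponents, hence lies in $\mathbb{Z}[q_1,q_2,q_3,q_4]$. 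Consequently the determinant
$$D_n(q_1,q_2,q_3,q_4)=\det M_n(q)$$
is a polynomial in $\mathbb{Z}[q_1,q_2,q_3,q_4]$, and (the form being symmetric) the restriction of the pairing to $\h(n)$ is non-degenerate precisely when $D_n(q)\neq 0$.

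Next I would argue that it suffices to prove that $D_n$ is not the zero polynomial. Indeed, if $q_1,q_2,q_3,q_4$ are algebraically independent over $\mathbb{Z}$, then by definition no nonzero polynomial of $\mathbb{Z}[X_1,X_2,X_3,X_4]$ vanishes at $(q_1,q_2,q_3,q_4)$, so $D_n(q)\neq 0$ for every $n$ and we are done. To see that $D_n$ is nonzero as a polynomial, I would specialize $X_2\mapsto 0$. By the computation carried out in the proof of Corollary \ref{36} (using Lemmas \ref{34} and \ref{35}, which put $M_n$ in anti-triangular form with $q_1^{n(n-1)/2}$ on the antidiagonal, together with $\sharp\PP(n)=n!$), we have
$$D_n(X_1,0,X_3,X_4)=\pm X_1^{\frac{n!\,n(n-1)}{2}},$$
which is a nonzero element of $\mathbb{Z}[X_1,X_3,X_4]$. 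Since setting $X_2=0$ is a ring homomorphism $\mathbb{Z}[X_1,X_2,X_3,X_4]\to\mathbb{Z}[X_1,X_3,X_4]$ sending the zero polynomial to the zero polynomial, this forces $D_n\neq 0$ in $\mathbb{Z}[X_1,X_2,X_3,X_4]$.

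There is essentially no hard step remaining: the whole difficulty has already been absorbed into Corollary \ref{36}, which supplies an explicit specialization on which the Gram determinant does not vanish. The only point requiring a little care is the passage from \emph{non-vanishing at a specialization} to \emph{non-vanishing as a polynomial} and then to \emph{non-vanishing at algebraically independent values}; both implications are immediate from the definition of algebraic independence and the homomorphism property just noted.
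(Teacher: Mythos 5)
Your proposal is correct and follows essentially the same route as the paper: reduce to the homogeneous Gram determinant $D_n\in\mathbb{Z}[q_1,q_2,q_3,q_4]$, observe that it is a nonzero polynomial because its specialization at $q_2=0$ is $\pm q_1^{n!\,n(n-1)/2}$ by the anti-triangular form established for Corollary \ref{36}, and conclude by the definition of algebraic independence. The only cosmetic difference is that you specialize $X_2\mapsto 0$ keeping the other variables as indeterminates, while the paper evaluates at the point $(1,0,q_3,q_4)$; both arguments are valid and interchangeable.
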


\begin{proof}
For all $n$, let us consider the matrix of the pairing restricted to $\h_{\PP}(n)$ in the basis formed by the plane posets of degree $n$.
Its determinant $D_n$ is clearly an element of $\mathbb{Z}[q_1,q_2,q_3,q_4]$. Moreover, $D_n(1,0,q_3,q_4)\neq 0$ by corollary \ref{36},
so $D_n$ is a non-zero polynomial. As a consequence, if $q_1,q_2,q_3,q_4$  are algebraically independent over $\mathbb{Z}$,
$D_n(q_1,q_2,q_3,q_4)\neq 0$. \end{proof}

\section{Morphism to free quasi-symmetric functions}

\subsection{A second Hopf pairing on $\h_{(q_1,0,q,_1,q_4)}$}

We here assume that $q_2=0$ and $q_1=q_3$. For all double poset $P$, 
$$\Delta_{(q_1,0,q_1,q_4)}(P)= \sum_{\mbox{\scriptsize $I$ $h$-ideal of $P$}} 
q_1^{h_{R\setminus I}^I+r_{R\setminus I}^I}q_4^{r_I^{P\setminus I}}(P\setminus I)\otimes I.$$
For any $h$-ideal $I$ of $P$, $h_I^{P\setminus I}=0$, so:
\begin{eqnarray*}
h_{R\setminus I}^I+r_{R\setminus I}^I&=&\sharp\{(x,y)\in (P\setminus I)\times I\mid x <y\},\\
r_I^{P\setminus I}&=&r_I^{P\setminus I}+h_I^{P\setminus I}\\
&=&\sharp\{(x,y)\in (P\setminus I)\times I\mid x>y\}.
\end{eqnarray*}
Hence:
$$\Delta_{(q_1,0,q_1,q_4)}(P)= \sum_{\mbox{\scriptsize $I$ $h$-ideal of $P$}} 
q_1^{\sharp\{(x,y)\in (P\setminus I)\times I\mid x<y\}}q_4^{\sharp\{(x,y)\in (P\setminus I)\times I\mid x<y\}}(P\setminus I)\otimes I.$$

{\bf Notations.} Let $P,Q \in \P(n)$ and let $\sigma \in Bij(P,Q)$. As totally ordered sets, $P=Q=\{1,\ldots,n\}$, so $\sigma$ can be seen 
as an element of the symmetric group $\S_n$. Its \emph{length} $\ell(\sigma)$ is then its length in the Coxeter group $\S_n$ \cite{Humphreys}.

\begin{theo}We define a pairing $\langle-,-\rangle'_q:\h_q\otimes \h_q \longrightarrow K$ by:
$$\langle P,Q \rangle'_q=\sum_{\sigma \in S'(P,Q)} q_1^{\frac{n(n-1)}{2}-\ell(\sigma)}q_4^{\ell(\sigma)},$$
for all $P,Q \in \PP$, with $n=deg(P)$. Then $\langle-,-\rangle'_q$ is an homogeneous symmetric Hopf pairing on the braided Hopf algebra $\h_q=(\h,m,\Delta_q)$.
\end{theo}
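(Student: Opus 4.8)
The statement asserts three things: that $\langle-,-\rangle'_q$ is homogeneous, symmetric, and a Hopf pairing for $(\h_{\PP},m,\Delta_q)$ with $q=(q_1,0,q_1,q_4)$. Homogeneity is immediate, since $S'(P,Q)=\emptyset$ whenever $|P|\neq|Q|$, so the pairing vanishes off the diagonal blocks $\h_{\PP}(n)\otimes\h_{\PP}(n)$. For symmetry I would use the involution $\sigma\mapsto\sigma^{-1}$: interchanging the two implications in the definition of $S'(P,Q)$ shows that $\sigma\mapsto\sigma^{-1}$ restricts to a bijection $S'(P,Q)\to S'(Q,P)$, and since $\ell(\sigma^{-1})=\ell(\sigma)$ in the Coxeter group $\S_n$ (and $\tfrac{n(n-1)}{2}-\ell$ is preserved as well), each summand is unchanged, giving $\langle P,Q\rangle'_q=\langle Q,P\rangle'_q$. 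The unit/counit compatibility $\langle 1,Q\rangle'_q=\varepsilon(Q)$ is clear, as $S'(1,Q)$ is a single empty bijection when $Q=1$ and empty otherwise.

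The heart of the proof is the compatibility $\langle P_1P_2,Q\rangle'_q=\langle P_1\otimes P_2,\Delta_q(Q)\rangle'_q$. Following the pattern of Lemma \ref{26}, I would first prove a decomposition lemma: the restriction map $\sigma\mapsto(\sigma_{\mid P_1},\sigma_{\mid P_2})$, with $I=\sigma(P_2)$, is a bijection
$$S'(P_1P_2,Q)\longrightarrow\bigsqcup_{\text{$I$ $h$-ideal of $Q$}}S'(P_1,Q\setminus I)\times S'(P_2,I).$$
The crucial point is that $I=\sigma(P_2)$ is forced to be an $h$-ideal: if $u=\sigma(b)\in I$ with $b\in P_2$ and $u\leq_h w$ in $Q$, writing $w=\sigma(a)$, the second defining property of $S'$ gives $b\leq a$ in $P_1P_2$; since every element of $P_2$ dominates every element of $P_1$ in the total order, this forces $a\in P_2$, hence $w\in I$. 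The converse, and the fact that the restrictions lie in $S'(P_1,Q\setminus I)$ and $S'(P_2,I)$, follow by checking the two $S'$-conditions blockwise, using that $P_1<P_2$ kills the off-diagonal $\leq_h$-comparisons and that the $h$-ideal property disposes of the remaining case.

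The second ingredient of the lemma is the behaviour of the length. Splitting the inversions of $\sigma\in\S_n$ according to the blocks $P_1<P_2$, the within-block inversions give $\ell(\sigma_1)+\ell(\sigma_2)$ (standardization of $Q\setminus I$ and $I$ preserves the total order, so is harmless), while the cross inversions count pairs $(u,v)\in(Q\setminus I)\times I$ with $u>v$, i.e. $h_I^{Q\setminus I}+r_I^{Q\setminus I}=r_I^{Q\setminus I}$ (using $h_I^{Q\setminus I}=0$ for an $h$-ideal, by Lemma \ref{21}). Thus $\ell(\sigma)=\ell(\sigma_1)+\ell(\sigma_2)+r_I^{Q\setminus I}$, and with $|P_1||P_2|=h_{Q\setminus I}^I+r_{Q\setminus I}^I+r_I^{Q\setminus I}$ from Lemma \ref{13}, the complementary exponent satisfies, writing $n_1=|P_1|$ and $n_2=|P_2|$,
$$\tfrac{n(n-1)}{2}-\ell(\sigma)=\Big(\tfrac{n_1(n_1-1)}{2}-\ell(\sigma_1)\Big)+\Big(\tfrac{n_2(n_2-1)}{2}-\ell(\sigma_2)\Big)+h_{Q\setminus I}^I+r_{Q\setminus I}^I.$$

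Finally I would assemble the pairing. Summing $q_1^{\frac{n(n-1)}{2}-\ell(\sigma)}q_4^{\ell(\sigma)}$ over $S'(P_1P_2,Q)$ through the bijection, the two block contributions rebuild $\langle P_1,Q\setminus I\rangle'_q\langle P_2,I\rangle'_q$, while the cross factor is exactly $q_1^{h_{Q\setminus I}^I+r_{Q\setminus I}^I}q_4^{r_I^{Q\setminus I}}$, which is precisely the coefficient of $(Q\setminus I)\otimes I$ in the expansion of $\Delta_{(q_1,0,q_1,q_4)}(Q)$ displayed before the theorem. Hence
$$\langle P_1P_2,Q\rangle'_q=\sum_{\text{$I$ $h$-ideal of $Q$}}q_1^{h_{Q\setminus I}^I+r_{Q\setminus I}^I}q_4^{r_I^{Q\setminus I}}\langle P_1,Q\setminus I\rangle'_q\langle P_2,I\rangle'_q=\langle P_1\otimes P_2,\Delta_q(Q)\rangle'_q,$$
which is the Hopf pairing axiom. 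The main obstacle is the decomposition lemma, and within it the verification that $\sigma(P_2)$ is an $h$-ideal together with the clean splitting of the length; once these are in hand, the exponent bookkeeping matches the coproduct coefficients automatically.
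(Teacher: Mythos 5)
Your proposal is correct and follows essentially the same route as the paper's proof: the same restriction bijection $\sigma\mapsto(\sigma_{\mid P_1},\sigma_{\mid P_2})$ onto pairs indexed by $h$-ideals $I=\sigma(P_2)$, the same length decomposition $\ell(\sigma)=\ell(\sigma_1)+\ell(\sigma_2)+r_I^{Q\setminus I}$ (the paper gets it by factoring $\sigma=\zeta^{-1}\circ(\sigma_1\otimes\sigma_2)$ through a shuffle, you by counting cross-block inversions directly, which is the same computation), and the same exponent bookkeeping matching the coefficients of $\Delta_{(q_1,0,q_1,q_4)}$. The symmetry argument via $\sigma\mapsto\sigma^{-1}$ preserving length is also identical to the paper's.
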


\begin{proof} This pairing is clearly homogeneous. For any double posets $P$ and $Q$, the map from $S'(P,Q)$ to $S'(Q,P)$ sending $\sigma$
to its inverse is a bijection and conserves the length: this implies that the $\langle P,Q\rangle'_q=\langle Q,P\rangle'_q$. \\

Let $P,Q,R$ be three double posets. There is a bijection:
$$\Theta:\left\{\begin{array}{rcl}
Bij(PQ,R)&\longrightarrow&\displaystyle \bigcup_{I\subseteq R} Bij(P,R\setminus I)\times Bij(Q,I)\\
\sigma&\longrightarrow&(\sigma_{\mid P},\sigma_{\mid Q}),
\end{array}\right.$$
with $I=\sigma(Q)$. Let us first prove that $\sigma \in S'(PQ,R)$ if, and only if, $I$ is a $h$-ideal of $R$ and $(\sigma_1,\sigma_2) \in S'(P,R\setminus I) \times
S'(Q,I)$.

$\Longrightarrow$. Let $x'\in I$ and let $y' \in R$ such that $x'\leq_h y'$. We put $x'=\sigma(x)$ and $y'=\sigma(y)$. As $\sigma \in S'(PQ,R)$,
$x \leq y$. As $x \in Q$, $y\in Q$, so $y'\in I$: $I$ is a $h$-ideal of $R$. By restriction, $\sigma_{\mid P}\in S'(P,R\setminus I)$ and $\sigma_{\mid Q}\in S'(Q,I)$.

$\Longleftarrow$. Let us assume that $x\leq_h y$ in $PQ$. Then $x,y \in P$ or $x,y \in Q$. As  $\sigma_{\mid P}\in S'(P,R\setminus I)$
and $\sigma_{\mid Q}\in S'(Q,I)$, $\sigma(x) \leq \sigma(Q)$ in $R$.
Let us assume that $\sigma(x) \leq_h \sigma(y)$ in $R$. As $I$ is a $h$-ideal, there are two possibilities:
\begin{itemize}
\item $\sigma(x),\sigma(y) \in R\setminus I$ or $\sigma(x),\sigma(y) \in I$. Hence, $x,y \in P$ or $x,y \in Q$. 
As  $\sigma_{\mid P}\in S'(P,R\setminus I)$ and $\sigma_{\mid Q}\in S'(Q,I)$, $x \leq y$ in $PQ$.
\item $\sigma(x) \in R\setminus I$ and $\sigma(y) \in I$. Hence, $x \in P$ and $y\in Q$, so $x\leq y$ in $PQ$.
\end{itemize}

Finally, we obtain a bijection:
$$\Theta:\left\{\begin{array}{rcl}
S'(PQ,R)&\longrightarrow&\displaystyle \bigcup_{\mbox{\scriptsize $I$ $h$-ideal of $R$}} S'(P,R\setminus I)\times S'(Q,I)\\
\sigma&\longrightarrow&(\sigma_{\mid P},\sigma_{\mid Q}),
\end{array}\right.$$
Let $\sigma \in S'(PQ,R)$ and we put $\Theta(\sigma)=(\sigma_1,\sigma_2)$. Let $R\setminus I=\{i_1,\ldots,i_k\}$ and $I=\{j_1,\ldots,j_l\}$. 
Let $\zeta \in \S_{k+l}$ defined by $\zeta^{-1}(1)=i_1\ldots,\zeta^{-1}(k)=i_k,\zeta^{-1}(k+1)=j_1,\ldots,\zeta^{-1}(k+l)=j_l$.
Then $\zeta$ is a $(k,l)$-shuffle and $\sigma=\zeta^{-1} \circ (\sigma_1\otimes \sigma_2)$,
so $\ell(\sigma)=\ell(\zeta)+\ell(\sigma_1)+\ell(\sigma_2)$. Moreover:
$$\ell(\zeta)=\sharp\{(p,q)\mid j_q < i_p\}=\sharp\{(i,j)\in (R \setminus I) \times I\mid j<i\}
=r_I^{R\setminus I}+h_I^{R\setminus I}=r_I^{R\setminus I},$$
as $I$ is a $h$-ideal of $R$. Finally, $\ell(\sigma)=\ell(\sigma_1)+\ell(\sigma_2)+r_I^{R\setminus I}$. Moreover, if $|R|=n$ and $|I|=k$, by lemma \ref{13}:
$$h_{R\setminus I}^I+r_{R\setminus I}^I+r_I^{R\setminus I}=
h_{R\setminus I}^I+r_{R\setminus I}^I+h_I^{R\setminus I}+r_I^{R\setminus I}=k(n-k),$$
so:
\begin{eqnarray*}
&&h_{R\setminus I}^I+r_{R\setminus I}^I+\frac{k(k-1)}{2}+\frac{(n-k)(n-k-1)}{2}\\
&=&k(n-k)+\frac{k(k-1)}{2}+\frac{(n-k)(n-k-1)}{2}-r_I^{R\setminus I}\\
&=&\frac{n(n-1)}{2}-r_I^{R\setminus I}.
\end{eqnarray*}
Finally:
\begin{eqnarray*}
\langle P,Q \rangle'_q&=&\sum_{\sigma \in S'(PQ,R)}q_1^{\frac{n(n-1)}{2}-\ell(\sigma)}q_4^{\ell(\sigma)}\\
&=&\sum_{\mbox{\scriptsize $I$ $h$-ideal of $R$}}\sum_{\sigma_1 \in S'(P,R\setminus I),\sigma_2 \in S'(Q,I)}
q_1^{\frac{n(n-1)}{2}-\ell(\sigma_1)-\ell(\sigma_2)-r_I^{R\setminus I}}q_4^{\ell(\sigma_1)+\ell(\sigma_2)+r_I^{R\setminus I}}\\
&=&\sum_{\mbox{\scriptsize $I$ $h$-ideal of $R$}}\sum_{\sigma_1 \in S'(P,R\setminus I),\sigma_2 \in S'(Q,I)}
q_1^{h_{R\setminus I}^I+r_{R\setminus I}^I+\frac{k(k-1)}{2}+\frac{(n-k)(n-k-1)}{2}}
q_4^{\ell(\sigma_1)+\ell(\sigma_2)+r_I^{R\setminus I}}\\
&=&\sum_{\mbox{\scriptsize $I$ $h$-ideal of $R$}}q_1^{h_{R\setminus I}^I+r_{R\setminus I}^I}q_4^{r_I^{R\setminus I}}
\langle P,R\setminus I\rangle'_q\langle Q,I\rangle'_q\\
&=&\langle P\otimes Q,\Delta_q(R)\rangle'_q.
\end{eqnarray*}
So this pairing is a Hopf pairing on $\h_q$. \end{proof}\\

{\bf Remark.} In particular, $\langle-,-\rangle_{(1,0,1,1)}'$ is the pairing $\langle-,-\rangle'$ of the section \ref{recalls}.\\

{\bf Examples.} Here are the matrices of the pairing $\langle-,-\rangle'_q$ restricted to $\h_q(n)$, for $n=1,2,3$.

$$\begin{array}{c|c}
&\tun\\
\hline \tun&1
\end{array} \hspace{1cm}
\begin{array}{c|c|c}
&\tdeux&\tun\tun\\
\hline \tdeux&q_1&q_1\\
\hline \tun\tun&q_1&q_1+q_4
\end{array}$$
$$\begin{array}{c|c|c|c|c|c|c}
&\ttroisdeux&\ttroisun&\ptroisun&\tdeux\tun&\tun\tdeux&\tun\tun\tun\\
\hline \ttroisdeux&q_1^3&q_1^3&q_1^3&q_1^3&q_1^3&q_1^3\\
\hline \ttroisun&q_1^3&q_1^2(q_1+q_4)&q_1^3&q_1^2(q_1+q_4)&q_1^3&q_1^2(q_1+q_4)\\
\hline \ptroisun&q_1^3&q_1^3&q_1^2(q_1+q_4)&q_1^3&q_1^2(q_1+q_4)&q_1^2(q_1+q_4)\\
\hline \tdeux\tun&q_1^3&q_1^2(q_1+q_4)&q_1^3&q_1^2(q_1+q_4)&q_1(q_1^2+q_1^4)&q_1(q_1^2+q_1q_4+q_4^2)\\
\hline \tun\tdeux&q_1^3&q_1^3&q_1^2(q_1+q_4)&q_1(q_1^2+q_1^4)&q_1^2(q_1+q_4)&q_1(q_1^2+q_1q_4+q_4^2)\\
\hline \tun\tun\tun&q_1^3&q_1^2(q_1+q_4)&q_1^2(q_1+q_4)&q_1(q_1^2+q_1q_4+q_4^2)&q_1(q_1^2+q_1q_4+q_4^2)&
(q_1+q_4)(q_1^2+q_1q_4+q_4^2)
\end{array}$$

\begin{prop}
For all $x,y,z \in \h_q$, $\langle x\prodh y,z\rangle'_q=\langle x \otimes y, \Delta_{(q_1,0,q_1,0)}(z)\rangle'_q$.
\end{prop}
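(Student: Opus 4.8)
The plan is to mimic the proof of the preceding theorem, replacing the composition product by $\prodh$ and tracking how this changes the set of subposets that occur. By bilinearity it suffices to treat three plane posets $P,Q,R$ and to prove
$$\langle P\prodh Q,R\rangle'_q=\sum_{\mbox{\scriptsize $I$ biideal of $R$}}q_1^{h_{R\setminus I}^I+r_{R\setminus I}^I}\langle P,R\setminus I\rangle'_q\langle Q,I\rangle'_q,$$
since by proposition \ref{22} (point 4, with $q_3=q_1$ and $q_4=0$) the right-hand side is exactly $\langle P\otimes Q,\Delta_{(q_1,0,q_1,0)}(R)\rangle'_q$. As before I would start from the restriction bijection $\sigma\mapsto(\sigma_{\mid P},\sigma_{\mid Q})$ from $Bij(P\prodh Q,R)$ onto $\bigcup_{I\subseteq R}Bij(P,R\setminus I)\times Bij(Q,I)$, with $I=\sigma(Q)$; the only genuinely new work is to identify which $I$ survive in $S'$ and to recompute the length shift.

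The key step is the characterization of $S'(P\prodh Q,R)$. I would show that $\sigma\in S'(P\prodh Q,R)$ if and only if $I=\sigma(Q)$ is a \emph{biideal} of $R$ and $(\sigma_{\mid P},\sigma_{\mid Q})\in S'(P,R\setminus I)\times S'(Q,I)$. The restriction statements are routine, exactly as in the theorem. The crucial point is that $I$ must be a biideal: for $x\in P$ and $y\in Q$ one has $x\leq_h y$ in $P\prodh Q$ (the defining property of $\prodh$), so the first axiom of $S'$ forces $\sigma(x)\leq\sigma(y)$, and since $\sigma$ is injective, $\sigma(x)<\sigma(y)$. Thus every element of $\sigma(P)=R\setminus I$ lies strictly below every element of $\sigma(Q)=I$ in the total order of $R$; that is, $I$ is a final segment, equivalently a biideal (definition \ref{7}). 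This is precisely where $\prodh$ differs from $m$: for the composition $PQ$ one only has $x\leq_r y$, which the axioms of $S'$ (phrased through $\leq_h$) do not detect, so there only an $h$-ideal appears, whereas here the $\leq_h$-relation does trigger the constraint and the $h$-ideal is upgraded to a biideal. This is what turns $\Delta_{(q_1,0,q_1,q_4)}$ into $\Delta_{(q_1,0,q_1,0)}$.

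It then remains to reuse the length bookkeeping of the theorem. Writing $R\setminus I=\{i_1<\cdots<i_k\}$ and $I=\{j_1<\cdots<j_l\}$ and factoring $\sigma=\zeta^{-1}\circ(\sigma_{\mid P}\otimes\sigma_{\mid Q})$ through the associated $(k,l)$-shuffle $\zeta$, one again obtains $\ell(\sigma)=\ell(\zeta)+\ell(\sigma_{\mid P})+\ell(\sigma_{\mid Q})$ with $\ell(\zeta)=\sharp\{(i,j)\in(R\setminus I)\times I\mid j<i\}$. But now $I$ is a final segment, so this count is $0$ and $\ell(\sigma)=\ell(\sigma_{\mid P})+\ell(\sigma_{\mid Q})$. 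Finally, with $k=|P|$, $l=|Q|$ and $n=k+l$, the identity $\frac{n(n-1)}{2}=kl+\frac{k(k-1)}{2}+\frac{l(l-1)}{2}$ together with $h_{R\setminus I}^I+r_{R\setminus I}^I=kl$ (lemma \ref{13}, using $h_I^{R\setminus I}=r_I^{R\setminus I}=0$ for a biideal) matches the exponents of $q_1$ and $q_4$ on both sides; summing over biideals $I$ and over the pairs $(\sigma_{\mid P},\sigma_{\mid Q})$ then yields the displayed formula.

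The main obstacle is the biideal characterization of the second paragraph. Once one observes that $\prodh$ stacks $P$ below $Q$ through the $\leq_h$ order that $S'$ actually constrains, everything else is the same computation as in the preceding theorem, now with the shuffle length $\ell(\zeta)$ dropping to zero.
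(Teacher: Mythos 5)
Your proposal is correct and takes essentially the same approach as the paper: the same restriction bijection $\sigma\mapsto(\sigma_{\mid P},\sigma_{\mid Q})$, the same characterization of $S'(P\prodh Q,R)$ by the condition that $I=\sigma(Q)$ is a biideal (equivalently a final segment of the total order) with restrictions in $S'(P,R\setminus I)\times S'(Q,I)$, and the same length bookkeeping giving $\ell(\sigma)=\ell(\sigma_{\mid P})+\ell(\sigma_{\mid Q})$ and the exponent identity $h_{R\setminus I}^I+r_{R\setminus I}^I=|R\setminus I|\,|I|$. The only cosmetic differences are that you obtain the biideal constraint directly (all of $\sigma(P)$ lies strictly below $\sigma(Q)$) where the paper argues by contradiction, and that you cite proposition \ref{22} for the right-hand side where the paper rederives it inline.
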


\begin{proof} Let $P,Q,R \in \PP$. We consider the following map:
$$\Theta:\left\{\begin{array}{rcl}
Bij(P\prodh Q,R)&\longrightarrow&\displaystyle \bigcup_{I\subseteq R} Bij(P,R\setminus I)\times Bij(Q,I)\\
\sigma&\longrightarrow&(\sigma_{\mid P},\sigma_{\mid Q}),
\end{array}\right.$$
with $I=\sigma(Q)$. Let us first prove that $\sigma \in S'(PQ,R)$ if, and only if, $I$ is a biideal of $R$ and $(\sigma_1,\sigma_2) \in S'(P,R\setminus I) \times
S'(Q,I)$.\\

$\Longrightarrow$. Obviously, $(\sigma_1,\sigma_2) \in S'(P,R\setminus I) \times S'(Q,I)$. Let $x' \in I$, $y'\in R$, such that $x'\leq y'$.
We put $x'=\sigma(x)$ and $y'=\sigma(y)$. If $y\notin Q$, then $y\in P$ and $x\in Q$, so $y<_h x$. As $\sigma \in S'(P\prodh Q,R)$,
$y'<x'$: contradiction. So $y\in Q$ and $y'\in I$. \\

$\Longleftarrow$. Let $x,y\in P\prodh Q$, such that $x \leq_h y$. Two cases are possible.
\begin{itemize}
\item If $x,y\in P$ or $x,y \in Q$, as $(\sigma_1,\sigma_2) \in S'(P,R\setminus I) \times S'(Q,I)$, $\sigma(x) \leq \sigma(y)$ in $R$.
\item If $x \in P$ and $y\in Q$, as $I=\sigma(Q)$ is a biideal, it is not possible to have $y\leq x$, so $x \leq y$.
\end{itemize}

Let $x,y \in P\prodh Q$, such that $\sigma(x) \leq_h \sigma(y)$ in $R$. As $I=\sigma(Q)$ is a biideal, two cases are possible.
\begin{itemize}
\item If $x,y\in P$ or $x,y \in Q$, as $(\sigma_1,\sigma_2) \in S'(P,R\setminus I) \times S'(Q,I)$, $x \leq y$ in $P\prodh Q$.
\item If $x \in P$ and $y\in Q$, then $x \leq y$ in $P \prodh Q$. \\
\end{itemize}

Moreover, if $I=\sigma(Q)$ is a biideal of $R$, then $I=\{k+1,\ldots,k+l\} \subseteq R$, where $k=|P|$ and $l=|Q|$. Then $\sigma=\sigma_1\otimes \sigma_2$,
so $\ell(\sigma)=\ell(\sigma_1)+\ell(\sigma_2)$. So, with $n=|R|$:
\begin{eqnarray*}
\langle P\prodh Q,R \rangle'_q&=&\sum_{\mbox{\scriptsize $I$ biideal of $R$}}\sum_{\sigma_1 \in S'(P,R\setminus I),\sigma_2 \in S'(Q,I)}
q_1^{\frac{n(n-1)}{2}-\ell(\sigma_1)-\ell(\sigma_2)}q_4^{\ell(\sigma_1)+\ell(\sigma_2)}\\
&=&\sum_{\mbox{\scriptsize $I$ biideal of $R$}}q_1^{\frac{n(n-1)}{2}-\frac{k(k-1)}{2}-\frac{(n-k)(n-k-1)}{2}}
\langle P\otimes Q,(R\setminus I)\otimes I\rangle'_q\\
&=&\sum_{\mbox{\scriptsize $I$ biideal of $R$}}q_1^{(n-k)k}
\langle P\otimes Q,(R\setminus I)\otimes I\rangle'_q\\
&=&\sum_{\mbox{\scriptsize $I$ biideal of $R$}}q_1^{|R \setminus I|.|I|}\langle P\otimes Q,(R\setminus I)\otimes I\rangle'_q\\
&=&\sum_{\mbox{\scriptsize $I$ biideal of $R$}}q_1^{h_{R\setminus I}^I+r_{R\setminus I}^I}\langle P\otimes Q,(R\setminus I)\otimes I\rangle'_q\\
&=&\langle P \otimes Q,\Delta_{(q_1,0,q_1,0)}(R)\rangle'_q,
\end{eqnarray*}
with the observation that, as $I$ is a biideal, $h_I^{R\setminus I}=r_I^{R\setminus I}=0$,
so $|R\setminus I|.|I|=h_{R\setminus I}^I+r_{R\setminus I}^I$. \end{proof}\\

{\bf Remark.} In particular, if $q=(1,0,1,1)$, for all $P,Q,R \in \PP$:
$$\langle P\prodh Q,R\rangle'=\langle P \otimes Q,\Delta_{(1,0,1,0)}(R)\rangle'
=\sum_{\mbox{\scriptsize $I$ biideal of $R$}} \langle P\otimes Q,(R\setminus I)\otimes I \rangle'.$$
This formula was already proved in \cite{F2}.




\subsection{Quantization of the Hopf algebra of free quasi-symmetric functions}

\begin{theo}\label{40} \begin{enumerate}
\item We define a coproduct on $\FQSym$ in the following way: for all $\sigma \in \S_n$, 
$$\Delta_q(\sigma)=\sum_{k=0}^n 
q_1^{k(n-k)-\ell(\sigma)+\ell(\sigma^{(1)}_k)+\ell(\sigma^{(2)}_k)}
q_4^{\ell(\sigma)-\ell(\sigma^{(1)}_k)-\ell(\sigma^{(2)}_k)}\sigma^{(1)}_k \otimes \sigma^{(2)}_k.$$
For all $q \in K$, $\FQSym_q=(\FQSym, m, \Delta_q)$ is a graded braided Hopf algebra.
\item One defines a Hopf pairing on $\FQSym_q$ by $\langle \sigma,\tau\rangle'=q_1^{\frac{n(n-1)}{2}-\ell(\sigma)}q_4^{\ell(\sigma)}\delta_{\sigma,\tau^{-1}}$.
\item The following map is an isomorphism of braided Hopf algebras:
$$\Theta:\left\{\begin{array}{rcl}
\h_q&\longrightarrow&\FQSym_q\\
P\in \PP&\longrightarrow&\displaystyle \sum_{\sigma \in S_P}\sigma.
\end{array}\right.$$
Moreover, for all $x,y \in \h_q$, $\langle \Theta(x),\Theta(y) \rangle'_q=\langle x,y\rangle'_q$.
\end{enumerate}\end{theo}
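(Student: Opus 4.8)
The plan is to transport every structure from $\h_q=(\h_{\PP},m,\Delta_{(q_1,0,q_1,q_4)})$ to $\FQSym$ along $\Theta$, which is already known (Definition \ref{11}, for the undeformed $m$) to be an isomorphism of algebras from $(\h_{\PP},m)$ onto $(\FQSym,m)$; note that $m$ is not deformed on either side. First I would prove that $\Theta$ intertwines the coproducts, i.e. $(\Theta\otimes\Theta)\circ\Delta_{(q_1,0,q_1,q_4)}=\Delta_q\circ\Theta$ with $\Delta_q$ the explicit map of point~1. Once this holds, point~1 is immediate: the coassociative braided Hopf structure of $\FQSym_q$ is the $\Theta$-image of that of $\h_q$, the antipode existing by connected gradedness, and $\Theta$ becomes an isomorphism of braided Hopf algebras. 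Points~2 and~3 then collapse onto the single isometry $\langle\Theta(x),\Theta(y)\rangle'_q=\langle x,y\rangle'_q$: granting it, the pairing of point~2 is a symmetric Hopf pairing simply because it is the transport, along the bialgebra isomorphism $\Theta$, of the Hopf pairing $\langle-,-\rangle'_q$ built on $\h_q$ in the preceding subsection.

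The heart of the matter, and where I expect the real work, is the coproduct intertwining. Fix $P\in\PP(n)$. Applying $\Theta$ to $\Delta_{(q_1,0,q_1,q_4)}(P)=\sum_I q_1^{h_{P\setminus I}^I+r_{P\setminus I}^I}q_4^{r_I^{P\setminus I}}(P\setminus I)\otimes I$ (sum over $h$-ideals $I$) yields a sum of terms $\Theta(P\setminus I)\otimes\Theta(I)$, whereas $\Delta_q(\Theta(P))=\sum_{\mu\in S_P}\sum_{k=0}^n q_1^{k(n-k)-\ell(\mu)+\ell(\mu_k^{(1)})+\ell(\mu_k^{(2)})}q_4^{\ell(\mu)-\ell(\mu_k^{(1)})-\ell(\mu_k^{(2)})}\,\mu_k^{(1)}\otimes\mu_k^{(2)}$. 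The classical coalgebra-morphism property of $\Theta$ already supplies the term-by-term identification: cutting a linear extension $\mu\in S_P$ after position $k$ produces the unique $h$-ideal $I$ formed by its last $n-k$ letters, the standardized factors $\mu_k^{(1)},\mu_k^{(2)}$ then range exactly over $S_{P\setminus I}\times S_I$, and $|P\setminus I|=k$. It remains only to match coefficients. Writing $\mu=\zeta_k^{-1}\circ(\mu_k^{(1)}\otimes\mu_k^{(2)})$ with $\zeta_k$ the associated $(k,n-k)$-shuffle, I would reuse the length additivity $\ell(\mu)=\ell(\zeta_k)+\ell(\mu_k^{(1)})+\ell(\mu_k^{(2)})$ and the identity $\ell(\zeta_k)=\sharp\{(x,y)\in(P\setminus I)\times I\mid x>y\}=r_I^{P\setminus I}$, both already established in the proof of the second pairing (the $h$-ideal condition $h_I^{P\setminus I}=0$ is precisely what converts the shuffle length into the $r$-statistic). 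Hence the $q_4$-exponent equals $\ell(\zeta_k)=r_I^{P\setminus I}$, which depends on $I$ alone, and the $q_1$-exponent equals $k(n-k)-r_I^{P\setminus I}=h_{P\setminus I}^I+r_{P\setminus I}^I$ by the complementarity $k(n-k)=h_{P\setminus I}^I+h_I^{P\setminus I}+r_{P\setminus I}^I+r_I^{P\setminus I}$ of Lemma \ref{13}, in which $h_I^{P\setminus I}=0$. Both exponents thus factor out of the inner sum over $\mu$ and match the coefficients of $\Delta_{(q_1,0,q_1,q_4)}$ exactly.

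The isometry I would prove directly. Since $P=Q=\{1,\dots,n\}$ as totally ordered sets, $\langle\Theta(P),\Theta(Q)\rangle'_q=\sum_{\mu\in S_P}\sum_{\nu\in S_Q}q_1^{\frac{n(n-1)}{2}-\ell(\mu)}q_4^{\ell(\mu)}\delta_{\mu,\nu^{-1}}$, so only pairs with $\nu=\mu^{-1}$ survive and the sum runs over $\{\mu\mid\mu\in S_P,\ \mu^{-1}\in S_Q\}$. The key point is that $\mu\mapsto\mu^{-1}$ maps this set bijectively onto $S'(P,Q)$: unwinding Definition \ref{11}, membership $\mu\in S_P$ reads $i\leq_h j$ in $P\Rightarrow\mu^{-1}(i)<\mu^{-1}(j)$ and $\mu^{-1}\in S_Q$ reads $i\leq_h j$ in $Q\Rightarrow\mu(i)<\mu(j)$, and the substitution $\sigma=\mu^{-1}$ turns these into the two defining conditions of $S'(P,Q)$. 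As $\ell(\sigma^{-1})=\ell(\sigma)$ in the Coxeter group $\S_n$, this bijection preserves length, whence $\langle\Theta(P),\Theta(Q)\rangle'_q=\sum_{\sigma\in S'(P,Q)}q_1^{\frac{n(n-1)}{2}-\ell(\sigma)}q_4^{\ell(\sigma)}=\langle P,Q\rangle'_q$. Symmetry of $\langle-,-\rangle'$ on $\FQSym_q$ is then immediate from $\delta_{\sigma,\tau^{-1}}=\delta_{\tau,\sigma^{-1}}$ together with $\ell(\sigma)=\ell(\sigma^{-1})$, and point~2 follows by transport as indicated. The main obstacle is concentrated in the coefficient bookkeeping of the coproduct step; by contrast, once the $\mu\mapsto\mu^{-1}$ correspondence is spelled out the isometry and symmetry are short.
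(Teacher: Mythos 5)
Your proposal is correct, but its core step is genuinely different from the paper's proof. The paper never computes $\Delta_q\circ\Theta$ directly: it first proves the isometry (citing $S'(P,Q)=S(P)\cap S(Q)^{-1}$ from \cite{F2}), and then obtains the coproduct intertwining by duality, writing $\langle \Delta_q\circ\Theta(x),y\otimes z\rangle'_q=\langle \Theta(x),yz\rangle'_q=\cdots=\langle(\Theta\otimes\Theta)\circ\Delta_q(x),y\otimes z\rangle'_q$ and cancelling the pairing; this requires the pairing on $\FQSym_q$ to be nondegenerate, hence $q_1,q_4\neq 0$, and the degenerate parameter values are only recovered afterwards by a Zariski-density argument over an infinite extension of $K$ (note also that the first equality of that chain silently uses that the explicit coproduct of point 1 is adjoint to the product of $\FQSym$ under the pairing of point 2, a small verification the paper does not write out). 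You instead prove $(\Theta\otimes\Theta)\circ\Delta_{(q_1,0,q_1,q_4)}=\Delta_q\circ\Theta$ head-on, by refining the classical cut-of-linear-extensions correspondence (for a fixed $h$-ideal $I$ with $|I|=n-k$, the $\mu\in S_P$ whose last $n-k$ letters index $I$ biject onto $S_{P\setminus I}\times S_I$) with the weight bookkeeping $\ell(\mu)=\ell(\zeta_k)+\ell(\mu^{(1)}_k)+\ell(\mu^{(2)}_k)$, $\ell(\zeta_k)=r_I^{P\setminus I}$ (valid because $h_I^{P\setminus I}=0$) and $k(n-k)-r_I^{P\setminus I}=h_{P\setminus I}^I+r_{P\setminus I}^I$ (lemma \ref{13}); these are exactly the identities already established in the proof of the Hopf-pairing theorem of subsection 5.1, so everything you need is available. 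What your route buys is uniformity: it is valid for all $(q_1,q_4)$, including $q_1=0$ or $q_4=0$, so the nondegeneracy-plus-density step disappears entirely, and points 1 and 2 then follow by pure transport of structure along the bialgebra isomorphism. The cost is re-deriving the combinatorics the paper cites, including the identification of $S'(P,Q)$ with inverses of linear extensions; your derivation of it is correct, and the apparent discrepancy with the paper's formula (your normalization yields $S'(P,Q)=S(Q)\cap S(P)^{-1}$ rather than $S(P)\cap S(Q)^{-1}$) is harmless for the pairing since $\ell(\sigma)=\ell(\sigma^{-1})$. One point to keep explicit in a final write-up: you use the ideal-by-ideal bijective form of the classical coalgebra-morphism property of $\Theta$, not merely the aggregate identity; that refined form is what the classical proofs establish, and you state it precisely, so there is no gap.
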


{\bf Remarks.} \begin{enumerate}
\item Let $\sigma \in \S_n$. If $0\leq k\leq n$, we put $\sigma(\{1,\ldots,k\})=\{i_1,\ldots,i_k\}$ and $\sigma(\{k+1,\ldots,n\})=\{j_1,\ldots,j_l\}$.
Let $\zeta$ be the $(k,l)$-shuffle defined by $\zeta^{-1}(1)=i_1\ldots,\zeta^{-1}(k)=i_k,\zeta^{-1}(k+1)=j_1,\ldots,\zeta^{-1}(k+l)=j_l$.
Then $\sigma=\zeta^{-1} \circ (\sigma^{(1)}_k \otimes \sigma^{(2)}_k)$, and $\ell(\sigma)=\ell(\zeta)+\ell(\sigma^{(1)}_k)+\ell(\sigma^{(2)}_k)$.
Moreover, $0\leq \ell(\zeta)\leq k(n-k)$, so:
$$0 \leq \ell(\sigma)-\ell(\sigma^{(1)}_k)-\ell(\sigma^{(2)}_k)\leq k(n-k).$$
As a consequence, the coproduct on $\FQSym_q$ is well-defined, even if $q_1=0$ or $q_4=0$. 
\item It is clear that the pairing $\langle-,-\rangle'_q$ defined on $\FQSym_q$ is non degenerate if, and only if, $q_1\neq 0$ and $q_2 \neq 0$.
\end{enumerate}

\begin{proof} We already know that $\Theta$ is an algebra isomorphism. Moreover, it is also proved there 
that $S'(P,Q)=S(P)\cap S(Q)^{-1}$ for any $P,Q \in PP$, so:
\begin{eqnarray*}
\langle \Theta(P),\Theta(Q)\rangle'_q&=&\sum_{\sigma \in S(P),\tau \in S(Q)} \langle \sigma,\tau\rangle'_q\\
&=&\sum_{\sigma \in S(P),\tau \in S(Q)}q_1^{\frac{n(n-1)}{2}-\ell(\sigma)}q_4^{\ell(\sigma)} \delta_{\sigma,\tau^{-1}}\\
&=&\sum_{\sigma \in S(P) \cap S(Q)^{-1}} q_1^{\frac{n(n-1)}{2}-\ell(\sigma)}q_4^{\ell(\sigma)}\\
&=&\sum_{\sigma \in S'(P,Q)} q_1^{\frac{n(n-1)}{2}-\ell(\sigma)}q_4^{\ell(\sigma)}\\
&=&\langle P,Q \rangle'_q.
\end{eqnarray*}
So $\Theta$ is an isometry. \\

As a consequence, for all $x\in \h_q$, $y,z\in \FQSym_q$:
\begin{eqnarray*}
\langle \Delta \circ \Theta(x), y\otimes z \rangle'_q&=&\langle \Theta(x),yz\rangle'_q\\
&=&\langle x, \Theta^{-1}(yz)\rangle'_q\\
&=&\langle x,\Theta^{-1}(y) \Theta^{-1}(z)\rangle'_q\\
&=&\langle \Delta_q(x),\Theta^{-1}(y) \otimes \Theta^{-1}(z)\rangle'_q\\
&=&\langle (\Theta \otimes \Theta) \circ \Delta_q(x), y\otimes z\rangle'_q.
\end{eqnarray*}
Let us now assume that $q_1,q_4\neq 0$. Then the pairing $\langle-,-\rangle'_q$ on $\FQSym_q$ is non-degenerate, so $\Delta \circ \Theta=(\Theta \otimes \Theta)\circ \Delta$.
Hence, $\Theta$ is an isometric isomorphism of braided Hopf algebras. This proves the three points immediately.

Up to an extension, we can assume that $K$ is infinite. It is not difficult to show that the set $A$ of elements $(q_1,q_4)\in K^2$ such that 
the three points are satisfied is given by polynomial equations with coefficients in $\mathbb{Z}$, As it contains $(K-\{0\})^2$ 
from the preceding observations, which is dense in $K^2$, it is is equal to $K^2$, so the result also holds for any $(q_1,q_4)$. \end{proof}

\begin{cor}
The pairing $\langle-,-\rangle'_q$ is non-degenerate if, and only if, $q_1\neq 0$ and $q_4 \neq 0$.
\end{cor}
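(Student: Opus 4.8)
The plan is to transport the problem through the isometric isomorphism $\Theta$ and then reduce the non-degeneracy question to a single determinant computation on $\FQSym_q$, where the pairing is essentially diagonal.

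First I would invoke Theorem \ref{40}: the map $\Theta:\h_q\to\FQSym_q$ is a linear isomorphism satisfying $\langle\Theta(x),\Theta(y)\rangle'_q=\langle x,y\rangle'_q$ for all $x,y$. Consequently $\langle-,-\rangle'_q$ on $\h_q$ is non-degenerate if and only if $\langle-,-\rangle'_q$ on $\FQSym_q$ is non-degenerate. Both pairings are homogeneous, so non-degeneracy is equivalent to non-degeneracy of the restriction to each graded piece $\FQSym_q(n)$, which is finite-dimensional with basis $\S_n$. It therefore suffices to show that the Gram matrix $M_n$ of the pairing on $\FQSym_q(n)$ is invertible for every $n$ exactly when $q_1\neq 0$ and $q_4\neq 0$.

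Next I would read off the entries of $M_n$ from part (2) of Theorem \ref{40}: the $(\sigma,\tau)$-entry is $q_1^{\frac{n(n-1)}{2}-\ell(\sigma)}q_4^{\ell(\sigma)}\delta_{\sigma,\tau^{-1}}$. Thus $M_n$ is a generalized permutation (monomial) matrix attached to the involution $\sigma\mapsto\sigma^{-1}$ of $\S_n$: in each row $\sigma$ the unique nonzero entry lies in column $\sigma^{-1}$, with value $c_\sigma=q_1^{\frac{n(n-1)}{2}-\ell(\sigma)}q_4^{\ell(\sigma)}$. (The identity $\ell(\sigma^{-1})=\ell(\sigma)$ makes $M_n$ symmetric, consistent with the pairing being symmetric.) The determinant of such a matrix is $\pm\prod_{\sigma\in\S_n}c_\sigma$, so
\[
\det M_n=\pm\, q_1^{\sum_{\sigma}\left(\frac{n(n-1)}{2}-\ell(\sigma)\right)}\,q_4^{\sum_{\sigma}\ell(\sigma)}.
\]
To collapse the exponents I would use the longest element $w_0\in\S_n$: the bijection $\sigma\mapsto w_0\sigma$ satisfies $\ell(w_0\sigma)=\frac{n(n-1)}{2}-\ell(\sigma)$, whence $\sum_{\sigma}\ell(\sigma)=\sum_{\sigma}\bigl(\frac{n(n-1)}{2}-\ell(\sigma)\bigr)=\frac{n!}{2}\cdot\frac{n(n-1)}{2}$. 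Writing $E_n$ for this common value gives $\det M_n=\pm(q_1q_4)^{E_n}$ with $E_n=\frac{n!\,n(n-1)}{4}$.

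Finally I would conclude: for $n\geq 2$ the exponent $E_n$ is a strictly positive integer, so $\det M_n\neq 0$ if and only if $q_1\neq 0$ and $q_4\neq 0$, while for $n\leq 1$ the matrix $M_n$ is invertible for every choice of $q$. Combining over all degrees yields that $\langle-,-\rangle'_q$ is non-degenerate precisely when $q_1\neq 0$ and $q_4\neq 0$. The argument is almost entirely routine once $\Theta$ is known to be an isometry; the only step demanding a little care is the determinant of $M_n$, namely checking that its support is exactly the graph of $\sigma\mapsto\sigma^{-1}$ and that the product of the diagonal values telescopes to $(q_1q_4)^{E_n}$ via the $w_0$-symmetry. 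No genuine obstacle remains, since the sign of $\det M_n$ is irrelevant to the vanishing criterion.
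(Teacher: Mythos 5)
Your proof is correct and takes essentially the same route as the paper: the paper likewise transports the question through the isometric isomorphism $\Theta$ of theorem \ref{40} to the pairing on $\FQSym_q$, whose non-degeneracy for $q_1,q_4\neq 0$ it treats as ``clear'' in the remark following that theorem. Your only addition is to make that step explicit via the monomial Gram matrix and the determinant $\det M_n=\pm\,(q_1q_4)^{n!\,n(n-1)/4}$, which is a fleshed-out version of the same argument rather than a different one.
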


\begin{proof} As the isometric pairing $\langle-,-\rangle'_q$ on $\FQSym_q$ is non-degenerate if, and only if, $q_1\neq 0$ and $q_4 \neq 0$. \end{proof}

\bibliographystyle{amsplain}
\bibliography{biblio}

\end{document}